\newtheorem{Theorem}{Theorem}[section]
\newtheorem{Proposition}[Theorem]{Proposition}
\newtheorem{Lemma}[Theorem]{Lemma}
\newtheorem{Corollary}[Theorem]{Corollary}
\theoremstyle{definition}
\newtheorem{Definition}[Theorem]{Definition}
\newtheorem{Remark}[Theorem]{Remark}
\newcommand{\bTheorem}[1]{
\begin{Theorem} \label{T#1} }
\newcommand{\eT}{\end{Theorem}}
\newcommand{\bProposition}[1]{
\begin{Proposition} \label{P#1}}
\newcommand{\eP}{\end{Proposition}}
\newcommand{\bLemma}[1]{
\begin{Lemma} \label{L#1} }
\newcommand{\eL}{\end{Lemma}}
\newcommand{\bCorollary}[1]{
\begin{Corollary} \label{C#1} }
\newcommand{\eC}{\end{Corollary}}
\newcommand{\bRemark}[1]{
\begin{Remark} \label{R#1} }
\newcommand{\eR}{\end{Remark}}
\newcommand{\bDefinition}[1]{
\begin{Definition} \label{D#1} }
\newcommand{\eD}{\end{Definition}}
\newcommand{\dif}{\mathrm{d}}
\newcommand{\mf}{\mathbb{F}}
\newcommand{\mr}{\mathbb{R}}
\newcommand{\prst}{\mathbb{P}}
\newcommand{\p}{\mathbb{P}}
\newcommand{\T}{\mathbb{T}}
\newcommand{\stred}{\mathbb{E}}
\newcommand{\mn}{\mathbb{N}}
\newcommand{\mt}{\mathbb{T}^3}
\DeclareMathOperator*{\esslim}{ess\,lim}
\newcommand{\bu}{\mathbf u}
\newcommand{\tor}{\mathbb{T}^3}
\newcommand{\StoB}{\left(\Omega, \mathbb{F},(\mathbb{F}_t )_{t \geq 0},  \mathbb{P}\right)}
\newcommand{\bfu}{\mathbf{u}}
\newcommand{\bfr}{\mathbf{s}}
\newcommand{\bFormula}[1]{
\begin{equation} \label{#1}}
\newcommand{\eF}{\end{equation}}
\newcommand{\dv}{\rm div}
\newcommand{\vu}{\vc{u}}
\newcommand{\vc}[1]{{\bf #1}}
\newcommand{\Div}{{\rm div}}
\newcommand{\Grad}{\nabla}
\newcommand{\tn}[1]{\mathbb{#1}}
\newcommand{\dx}{\,{\rm d} {x}}
\newcommand{\dt}{\,{\rm d} t }
\newcommand{\D}{{\rm d}}
\newcommand{\ep}{\varepsilon}
\newcommand{\R}{\mathbb{R}}
\newcommand{\E}{\mathbb{E}}
\definecolor{Cgrey}{rgb}{0.85,0.85,0.85}
\definecolor{Cblue}{rgb}{0.50,0.85,0.85}
\definecolor{Cred}{rgb}{1,0,0}
\definecolor{fancy}{rgb}{0.10,0.85,0.10}
\newcommand{\Dif}{{\rm d}}
\newcommand{\N}{\mathbb N}
\newcommand{\intTor}[1]{\int_{\tor} #1 \,\D x}
\newcommand\Cbox[2]{%
    \newbox\contentbox%
    \newbox\bkgdbox%
    \setbox\contentbox\hbox to \hsize{%
        \vtop{
            \kern\columnsep
            \hbox to \hsize{%
                \kern\columnsep%
                \advance\hsize by -2\columnsep%
                \setlength{\textwidth}{\hsize}%
                \vbox{
                    \parskip=\baselineskip
                    \parindent=0bp
                    #2
                }%
                \kern\columnsep%
            }%
            \kern\columnsep%
        }%
    }%
    \setbox\bkgdbox\vbox{
        \color{#1}
        \hrule width  \wd\contentbox %
               height \ht\contentbox %
               depth  \dp\contentbox
        \color{black}
    }%
    \wd\bkgdbox=0bp%
    \vbox{\hbox to \hsize{\box\bkgdbox\box\contentbox}}%
    \vskip\baselineskip%
}
\DeclareMathOperator*{\esssup}{ess\,sup}
\date{}
\begin{document}


\title{On weak-strong uniqueness for stochastic equations of incompressible fluid flow}

\author{
Abhishek Chaudhary \footnotemark[2]
\and Ujjwal Koley \footnotemark[3]
} 

\date{\today}

\maketitle


\medskip
\centerline{$^\dagger$ Centre for Applicable Mathematics, Tata Institute of Fundamental Research}
\centerline{P.O. Box 6503, GKVK Post Office, Bangalore 560065, India}\centerline{abhi@tifrbng.res.in}

\medskip
\centerline{$^\ddagger$ Centre for Applicable Mathematics, Tata Institute of Fundamental Research}
\centerline{P.O. Box 6503, GKVK Post Office, Bangalore 560065, India}
\centerline{ujjwal@math.tifrbng.res.in}

\begin{abstract}
We introduce a novel concept of \emph{dissipative measure-valued martingale solution} to the stochastic Euler equations describing the motion of an \emph{inviscid incompressible} fluid. These solutions are characterized by a parametrized Young measure and a concentration defect measure in the total energy balance. Moreover, they are weak in the probablistic sense i.e., the underlying probablity space and the driving Wiener process are intrinsic part of the solution. In a significant departure from the existing literature, we first exhibit the relative energy inequality for the incompressible Euler equations driven by a \emph{multiplicative} noise, and then demonstrate pathwise weak-strong uniqueness principle. Finally, we also provide a sufficient condition, \'a la Prodi \cite{prodi} $\&$ Serrin \cite{serrin}, for the uniqueness of weak martingale solutions to stochastic Naiver--Stokes system in the class of finite energy weak martingale solutions.

\end{abstract}

{\bf Keywords:} Euler system; Navier--Stokes system; Incompressible fluids; Stochastic forcing; Measure-valued solution; Dissipative solution; Weak-strong uniqueness.

\tableofcontents

\section{Introduction}
\label{P}
In this paper, we introduce a notion of \emph{dissipative measure-valued  solution} for the stochastically forced system of the \emph{incompressible
Euler system} describing by the velocity vector field $\vu$ of a fluid and the scalar pressure field $p$. The system of equations reads
\begin{equation}
\begin{cases} 
\D \vu(t,x) + \left[ \Div (\vu(t,x) \otimes \vu(t,x))+  \Grad p(t,x)\right]  \dt  =  \mathcal{G} (\vu(t,x)) \,\D W(t), &\quad \mbox{in}\,\, \Pi_T, \label{P1} \\ 
\Div \,\vu(t,x) = 0,&\quad \mbox{in}\,\, \Pi_T, \\
\vu(0, x) = \vu_0(x), &\quad \mbox{in}\,\, \mathbb{T}^3, 
\end{cases}
\end{equation}
where $\Pi_T:=\mathbb{T}^3 \times (0,T)$ with $T>0$ fixed, $u_0$ is the given random initial
function with sufficient spatial regularity to be specified later. Let $\big(\Omega, \mathbb{F}, \mathbb{P}, (\mathbb{F}_t )_{t\ge0} \big)$ be a stochastic basis, where $\big(\Omega, \mathbb{F}, \mathbb{P} \big)$ is a probability space and $(\mathbb{F}_t)_{t \ge 0}$ is a complete filtration with the usual assumptions. We assume that $W$ is a cylindrical Wiener process defined on the probability space $(\Omega,\mathbb{F},\p)$, and the coefficient $\mathcal{G}$ is generally nonlinear and satisfies suitable growth assumptions (see Section \ref{E} for the complete list of assumptions). In particular, the map $ \vu \mapsto \mathcal{G}(\vu)$ is  a Hilbert space valued function signifying the \emph{multiplicative} nature of the noise. 

The Euler equations \eqref{P1} are the classical model for the motion of an inviscid, incompressible fluid. The presence of stochastic terms in the governing equations accounts for numerical, physical and empirical uncertainties in various real life applications. The theory for deterministic counterpart of \eqref{P1} has experienced a substancial progress in past decade, and has reached some level of maturity thanks to pioneering work by De Lellis $\&$ Szekelyhidi \cite{DelSze3,DelSze}. In a nutshell, these results show that for suitable (large class of) initial data there exists \emph{infinitely many} weak solutions, even if the solution satisfies an \emph{entropy condition}. Indeed, it was Scheffer \cite{Sch} (see also \cite{Shn}) who first constructed a nontrival weak solution of the two dimensional incompressible Euler equation with compact support in time. In other words, recent results by De Lellis $\&$ Szekelyhidi, and others \cite{buck01,buck02,chio01} suggest that non-uniqueness of weak solutions to incompressible Euler equations in several space dimensions is a fact of life. However, for \emph{general} initial data, the existence of global-in-time weak solutions is still unknown. In quest for a global-in-time solution, we recall the framework of \emph{measure-valued} solutions, as introduced by Diperna $\&$ Majda \cite{Majda} (see also \cite{DL}) for the incompressible Euler equations. Despite being a weaker notion of solution, thanks to the work by Brenier et. al. \cite{brenier}, it is well-known that these measure-valued solutions of incompressible Euler equations enjoy a remarkable \emph{weak-strong uniqueness} property. Moreover, recent work by Szekelyhidi $\&$ Wiedemann \cite{emil_01} confirms that the notions of weak solutions and measure-valued solutions coincide for incompressible Euler equations.

 In the stochastic set-up, existence of \emph{pathwise} strong solutions (defined upto a stopping time) for incompressible Euler equations \eqref{P1} driven by a \emph{multiplicative noise}, in a three dimensional smooth bounded domain, was established by Glatt-Holtz $\&$ Vicol \cite{GHVic}. Moreover, a stochastic variant of deterministic results, as developed by Buckmaster $\&$ Vicol in \cite{buck01,buck02}, for the stochastic incompressible Euler system have been recently established by Hofmanova. et. al. in \cite{hof}. In fact, by making use of method of convex integration, they have constructed  infinitely many solutions to the incompressible Euler system with a random forcing. Note that, although measure-valued solution for the deterministic counterpart of \eqref{P1} has a long and intense history (see Lions \cite{lions}), their formulation (in the multiplicative case) seems rather intricating in the stochastic setting. Indeed, there has been a number of attemps to define a suitable notion of measure-valued solutions for the stochastic incompressible Euler equations driven by \emph{additive} noise, starting from the work of Kim \cite{Kim2}, Breit $\&$ Moyo \cite{BrMo}, and most recently by Hofmanova et. al. \cite{hof}, where the authors introduced a class of dissipative solutions which allowed them to demostrate weak-strong uniqueness property and non-uniqueness of solutions in law. However, none of the above mentioned framework can be applied to \eqref{P1} since the driving noise is \emph{multiplicative} in nature.

In this paper, our first objective is to introduce a novel framework of \emph{dissipative measure-valued} solutions for incompressible Euler equations \eqref{P1} driven by a \emph{multiplicative} stochastic perturbation of Nemytskii type, which will facilitate to demonstrate weak-strong uniquness principle. These solutions are measure-valued solutions of \eqref{P1} augmented with an appropriate form of energy inequality. The main advantage of this class of solutions is that, for any finite energy initial data, they can be shown to exist globally-in-time. 
The existence of such solutions is addressed through the well known vanishing viscosity method with the help of a stochastic compactness argument combining Prohorov theorem, and Skorokhod-Jakubowski \cite{Jakubowski} representation theorem. In other words, measure-valued solutions for \eqref{P1} is obtained through a sequence of solutions of Navier--Stokes system subject to stochastic forcing given by
\begin{align} \label{P1NS}
\D \vu_{\varepsilon} + \left[ \Div ( \vu_{\varepsilon} \otimes \vu_{\varepsilon}) +  \Grad p \right] \,dt  &= {\varepsilon}\,\Delta \vu \,dt + \mathcal{G} ( \vu_{\varepsilon}) \,\D W,
\\
 \Div \, \vu_{\varepsilon} \,&= 0 \notag.
\end{align}
There is a vast literature on the mathematical theory for stochastic perturbations of Navier--Stokes equations \eqref{P1NS}, being first initiated by Flandoli $\&$ Gatarek in \cite{GHVic} (see also \cite{Debussche}), where the global-in-time existence of \emph{weak martingale solutions} is shown. From the PDE standpoint, 
these solutions are weak, i.e., derivatives only exists in the sense of distributions, and from probabilistic point of view, these solutions are also weak in the sense that the driving noise and associated filtration are part of the solution.

 The primary difficulty in comparison to existing works, for example \cite{BrMo, Kim2}, lies in the successful identification (in the limit) of martingale terms involving nonlinear noise coefficient, which in turn plays a pivotal role in the proof of weak (measure-valued)-strong uniqueness principle. Indeed, since the martingale solutions of approximate equations \eqref{P1NS} are not unique, associated filtration depends on the approximation parameter $\varepsilon$, and passing to the limit in martingale terms seems delicate. We can only establsih that the limit object is a martinagle, without knowing its explicit structure. However, a key observation reveals that only a knowledge about the cross variation of a martingale solution with a strong solution is sufficient to exhibit weak-strong uniqueness property. This observation is encoded in the Definition~\ref{def:dissMartin}, by stipulating the correct cross variation between a martingale measure-valued solution and a given smooth process. The cross variation term resembles ``noise-noise'' interaction term appears in ``doubling of variables'' argument, \`{a} la Kru\v{z}kov, see \cite{BhKoleyVa, BhKoleyVa_01, BisKoleyMaj, Koley1, Koley2}. Recently, a concept of dissipative measure-valued solutions to stochastic compressible Euler equations has been introduced by Hofmanova et. al. in \cite{MKS01}, and used to prove convergence of a numerical scheme for stochastic compressible Euler equations by Chaudhury et. al. in \cite{K1}. However, the present work differs significantly from \cite{MKS01,K1}, due to the inherent challenges posed by the divergence-free condition associated to \eqref{P1}.

Our second objective is to display a sufficient condition for the uniqueness of weak martingale solutions to stochastic Naiver--Stokes equations \eqref{P1NS} in class of finite energy weak martingale solutions.  
Note that the uniqueness of (Leray-Hopf) weak solutions for deterministic counterpart of \eqref{P1NS} is an outstanding open problem. Therefore, there has been a lot of growing interest in searching for sufficient conditions for uniqueness of weak solutions, starting with the celebrated works by Prodi \cite{prodi} and Serrin \cite{serrin}. Due to the limitations of general uniqueness result in the stochastic set-up also, we follow Prodi $\&$ Serrin (see also Wiedemann \cite{emil}), to provide a sufficient condition for uniqueness by exploiting weak-strong uniqueness property of solutions to stochastic incompressible Navier--Stokes equations \eqref{P1NS}.

A brief description of the organization of the rest of this paper is as follows: Section~\ref{E} outlines all relevant underlying mathematical/technical framework, and list of conditions imposed on noise coefficient. We then describe solution concepts and display main results. In Section~\ref{proof1}, we establish a priori estimates and demonstrate convergence of approximate solutions, using stochastic compactness, to show existence of dissipative measure-valued martingale solutions to \eqref{P1}. In light of a suitable relative energy inequality for incompressible Euler system \eqref{P1}, we establish weak (measure-valued) -- strong uniqueness principle for \eqref{P1} in Section~\ref{proof2}, while in Section~\ref{NSE}, we provide a sufficient condition for uniqueness of weak martingale solutions to stochastic incompressible Naiver--Stokes equations \eqref{P1NS}, under additional conditions on weak martingale solutions.

\section{Technical Framework and Main Results} 
\label{E}

In this section, we recapitulate some of the relevant mathematical tools to be used in the subsequent analysis and state main results of this paper. To begin with, we fix an arbitrary large time horizon $T>0$. Throughout this paper we use the letters C, K etc. to denote various generic constants independent of approximation parameters, which may change line to line along the proofs. Explicit tracking of the constants could be possible but it is cumbersome and avoided for the sake of the reader.

\subsection{Analytic framework}
Let us denote the Sobolev space $H^s(\T^3)$, for $s \in \R$, as the set of tempered distributions for which the norm
$$
\| u\|^2_{H^s(\T^3)}:= \int_{\T^3} \big(  1 + |\xi|^2\big)^s |\hat{u}(\xi)|^2\, \D \xi, 
$$
is finite. Here $\hat{u}$ denotes the Fourier transform of $u$. Let $C_{\text{div}}^\infty(\mathbb{T}^3)$ and $L_{\text{div}}^2(\mathbb{T}^3)$ be the spaces of infinitely differentiable $3$-dimensional vector fields $u$ on $\mathbb{T}^3$ satisfying $\nabla \cdot \,u=0$, and closure of $C_{\text{div}}^\infty(\mathbb{T}^3)$ with respect to $L^2$-norm respectively. In other words,
\begin{align*}
C_{\text{div}}^\infty(\mathbb{T}^3) &:=\Big\{\bm{\varphi}\in C^{\infty}(\mathbb{T}^3):\nabla\cdot\bm{\varphi}=0\Big\}, \\
L_{\text{div}}^2(\mathbb{T}^3) &:=\Big\{\bm{\varphi}\in L^{2}(\mathbb{T}^3):\nabla\cdot\bm{\varphi}=0\Big\}.
\end{align*}
In a similar fashion, we denote by $ H_{\text{div}}^{\alpha}(\mathbb{T}^3)$ the closure of $C_{\text{div}}^\infty(\mathbb{T}^3)$ in $H^{ \alpha}(\mathbb{T}^3;\R^3)$, for $\alpha\,\ge\,0$.
Identityfying $L^2_{\dv}(\mathbb{T}^3)$ with its dual space $(L^2_{\dv}({\mathbb{T}^3}))'$ and identfying $(L^2_{\dv}({\mathbb{T}^3}))'$ with a subspace of $H^{-\alpha}(\T^3)$(the dual space of $H^\alpha(\T^3)$), we have $H_{\dv}^\alpha(\T^3)\subset L^2_{\dv}(\mathbb{T}^3)\subset H_{\dv}^{-\alpha}(\T^3)$, and we can denote the dual pairing between $H^\alpha_{\dv}$ and $H^{-\alpha}_{\dv}$ by $\langle \cdot, \cdot\rangle$ when no confusion may arise, see \cite{FlandoliGatarek}.

Moreover, we set  $D(A):=H_{\dv}^2(\mathbb{T}^3)$, and define the linear operator $A:D(A)\subset L^2_{\dv}(\mathbb{T}^3)\to L^2_{\dv}(\mathbb{T}^3)$ by $Au=-\Delta u$.
We then define the bilinear operator $B(u,v):H^1_{\dv}\times H^{1}_{\dv} \to H^{-1}_{\dv}$ as
$$
\langle B(u,v), z\rangle:=\int_{\mathbb{T}^3}z(x)\cdot(u(x)\cdot\nabla)v(x)dx, \,\, \mbox{for all}\,\,z\in H^1_{\dv}(\mathbb{T}^3).
$$
Note that the bilinear operator $B$ can be extended to a continuous operator
$$
B:L^2_{\dv}(\mathbb{T}^3)\times L^2_{\dv}(\mathbb{T}^3)\to D(A^{-\alpha})=H^{-2\alpha}_{\dv}
$$
for certain $\alpha\,\textgreater\,1 $, for details consult \cite{FlandoliGatarek}. A straightforward computation using incompressiblity condition reveals that
\begin{align}
\label{property}
\langle B(u,v),z\rangle=-\langle B(u,z),v\rangle=-\langle u\otimes v, \nabla z \rangle
\end{align}
for all $u,v\in H^1_{\dv}(\mathbb{T}^3)$ and $z\in C_{\text{div}}^\infty(\mathbb{T}^3)$.

An important consequence of elliptic theory is the existence of the Helmholtz decomposition. It allows to decompose any vector-valued function in $L^2(\mathbb{T}^3;\R^3)$ into a divergence free part and a gradient part. In other words, we have the following decomposition
$$
L^2(\mathbb{T}^3)=L^2_{\dv}(\mathbb{T}^3)\oplus (L^2_{\dv}(\mathbb{T}^3))^\perp,
$$
where we denote
$$
(L^2_{\dv}(\mathbb{T}^3))^\perp:=\Big\{\mathbf{u}\in L^2(\T^3;\R^3) \,|\, \mathbf{u}=\nabla \psi,\, \psi\in H^{1}(\T^3;\R)\Big\}
$$ 
The Helmholtz decomposition is defined by
$$
\mathbf{u}=\mathcal{P}_H \mathbf{u}+\mathcal{Q}_H\mathbf{u}, \,\,\text{for any }\,\mathbf{u}\in L^2(\mathbb{T}^3),
$$
where $\mathcal{P}_H$ denotes the projection operator from $L^2(\mathbb{T}^3)$ to $L^2_{\dv}(\mathbb{T}^3)$, and 
 $\mathcal{Q}_H:=Id-\mathcal{P}_H$ denotes the projection operator from $L^2(\mathbb{T}^3)$ to $(L^2_{\dv}(\mathbb{T}^3))^\perp$. Note that this decomposition is orthogonal with respect to $L^2(\mathbb{T}^3)$-inner product.
By property of projection operator $\mathcal{P}_H$, we have for $\mathbf{u}\in L^2(\mathbb{T}^3)$
\begin{align}
\label{property1}
\langle \mathcal{P}_H\mathbf{u},\psi\rangle=\langle \mathbf{u}, \psi \rangle,\,\, \text{for all}\,\, \psi\in C_{\text{div}}^\infty(\mathbb{T}^3),
\end{align}
Finally, we recall a compact embedding result from Flandoli $\&$ Gatarek \cite[Theorem 2.2]{FlandoliGatarek}. To state the result, let us first denote $K$ to be a separable Hilbert space. Given $q\,\textgreater\,1$, $\gamma \in (0,1)$, let $W^{\gamma,q}(0,T;K)$ denotes a $K$-valued Sobolev space which is characterized by its norm
$$
\| u\|^q_{W^{\gamma,q}(0,T;K)}:= \int_0^T \| u(t)\|^q_{K}\,dt + \int_0^T \int_0^T \frac{\| u(t)-u(s)\|^q_K}{|t-s|^{1+q\gamma}}\,dt\,ds.
$$
The following compact embedding result follows from Flandoli $\&$ Gatarek \cite[Theorem 2.2]{FlandoliGatarek}.
\begin{Lemma}
\label{comp}
If $H_1 \subset H_2$ are two Banach spaces with compact embedding, and real numbers $\gamma \in(0,1)$, $q\,\textgreater\,1$ satisfy $\gamma q>1$, then the embedding
$$
W^{\gamma,q}(0,T;H_1) \subset C([0,T]; H_2)
$$
is compact.
\end{Lemma}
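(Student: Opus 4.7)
The plan is to reduce the statement to the Arzel\`a--Ascoli theorem, using a vector-valued Morrey-type embedding in time as the key intermediate step.

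The central ingredient is the fractional Sobolev--Morrey embedding
$$W^{\gamma,q}(0,T;H_1) \hookrightarrow C^{0,\alpha}([0,T];H_1), \qquad \alpha := \gamma - \tfrac{1}{q} > 0.$$
The hypothesis $\gamma q > 1$ enters exactly to make $\alpha$ positive. A standard proof in the scalar case uses a Campanato-type argument: one compares $u(t)$ and $u(s)$ through an averaging at an intermediate scale $h \sim |t-s|$ and estimates each piece by H\"older's inequality in time together with the Gagliardo seminorm $\int_0^T\!\int_0^T \|u(t)-u(s)\|^q / |t-s|^{1+q\gamma} \, dt \, ds$. The extension to $H_1$-valued maps uses the triangle inequality in $H_1$ and the Bochner integral in place of the Lebesgue integral, but is otherwise identical --- no structure of the range space is used beyond completeness.

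Given this embedding, for any bounded set $B \subset W^{\gamma,q}(0,T;H_1)$ we obtain a uniform bound
$$\sup_{u \in B} \|u\|_{C^{0,\alpha}([0,T];H_1)} < \infty.$$
The continuous embedding $H_1 \hookrightarrow H_2$ (implicit in the compact embedding hypothesis) transfers the H\"older bound into $H_2$, giving equicontinuity of $B$ as a family of maps into $H_2$. Meanwhile, for each fixed $t \in [0,T]$, the slice $\{u(t) : u \in B\}$ is bounded in $H_1$ and hence precompact in $H_2$ by the compact embedding assumption. The Arzel\`a--Ascoli theorem, which applies to continuous maps from a compact metric space into any complete metric space, then extracts from every sequence in $B$ a subsequence converging uniformly in $C([0,T];H_2)$.

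The main obstacle I expect is the derivation of the Morrey embedding in the vector-valued setting. The argument hinges on carefully chosen averaging intervals and on the precise interplay between the fractional exponent $\gamma$ and the integrability $q$; one must verify that the usual scalar proof lifts to the Bochner setting without essential change, and that the resulting constant depends only on $T$, $\gamma$, and $q$. Once this is in hand, the Arzel\`a--Ascoli step is essentially formal.
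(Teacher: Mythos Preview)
The paper does not supply its own proof of this lemma; it is stated as a recalled result with a direct citation to Flandoli \& Gatarek \cite[Theorem~2.2]{FlandoliGatarek}. Your outline is correct and is exactly the standard argument (and the one in the cited reference): the fractional Morrey embedding $W^{\gamma,q}(0,T;H_1)\hookrightarrow C^{0,\alpha}([0,T];H_1)$ with $\alpha=\gamma-1/q>0$ provides uniform equicontinuity, the compact inclusion $H_1\hookrightarrow H_2$ yields pointwise precompactness of slices, and Arzel\`a--Ascoli finishes.
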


\subsubsection{Young measures, concentration defect measures}
\label{ym}

In this subsection, we recall the notion of Young measures and related results used in this manuscript. For a detail overview on this topic, we refer to Balder \cite{Balder}. To begin with, let us denote by $\mathcal{M}_b(F)$, the space of bounded Borel measures on a generic set $F$ equipped with the norm given by the total variation of measures. It is well-known that it is the dual space to the space of continuous functions vanishing at infinity $C_0(F)$ with respect to the supremum norm. Moreover, let us denote by $\mathcal{P}(F)$, the space of probability measures on $F$. 

To define Young measure, let us first fix a sigma finite measure space $(Y, \mathcal{N}, \mu)$. A Young measure from $Y$ into $\R^P$ is a weakly measurable function $\mathcal{V}: Y \rightarrow \mathcal{P}(\R^P)$. In other words, the map $y \rightarrow \mathcal{V}_{y}(S)$ is $\mathcal{N}$-measurable for every Borel set $S$ in $\R^P$. For our purpose, let us recall the following probabilistic generalization of the well-known classical result on Young measures. For a proof, we refer to the monograph by Breit et. al. \cite[Section~2.8]{BrFeHobook}.
\begin{Lemma}\label{lem001}
Let $P, Q \in\mathbb{N}$, $\mathcal{D} \subset \R^Q\times (0,T) $ and let $({\bf V}_m)_{m \in \N}$, ${\bf V}_m: \Omega\times\mathcal{D}  \to \R^P$,  be a sequence of random variables such that
$$
\E \Big[ \|{\bf V}_m \|^p_{L^p(\mathcal{D})}\Big] \le C, \,\, \text{for a certain}\,\, p\in(1,\infty).
$$
Then, on the complete probability space $\big([0,1], \overline{\mathcal{B}[0,1]}, \mathcal{L}_{\R} \big)$, there exists a new subsequence $(\tilde {\bf V}_m)_{m \in \N}$ (not relabeled) and a
family ${\lbrace \mathcal{\tilde V}^{\omega}_{x} \rbrace}_{x \in \mathcal{D}}$ of  parametrized
random probability measures on $\R^P$. These random probability measures can be treated as random variables taking values in $\big(L_{w^*}^{\infty}(\mathcal{D}; \mathcal{P}(\R^P)), w^* \big)$. Moreover, the random variables ${\bf V}_m$ has the same law as $\tilde {\bf V}_m$, i.e.
$
{\bf V}_m \sim_{d} \tilde {\bf V}_m,
$
and  the following property holds: given any Carath\'eodory function $H=H(x, Z), x \in \mathcal{D}, Z \in \R^P$, such that
$$
|H(x,Z)| \le C(1 + |Z|^q), \quad 1 \le q < p, \,\,\text{uniformly in}\,\,x,
$$
implies $\mathcal{L}_{\R}$ almost surely,
$$
H(\cdot, \tilde {\bf V}_m) \rightharpoonup \overline{H}\,\,\text{in}\,\, L^{p/q}(\mathcal{D}), 
\,\, \text{where}\,\, \overline{H}(x) = \langle \mathcal{\tilde V}^{\omega}_{(\cdot)}; H(x,\cdot)\rangle := \int_{\R^P} H(x, z)\,\D \mathcal{\tilde V}^{\omega}_{x}(z), \,\,\text{for a.a.}\,\, x \in \mathcal{D}.
$$
\end{Lemma}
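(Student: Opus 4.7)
The plan is to combine three classical ingredients: the deterministic fundamental theorem of Young measures, Banach--Alaoglu in the weak-$*$ topology on parametrized probability measures, and the Jakubowski--Skorokhod representation theorem on the standard Lebesgue space. First, to each ${\bf V}_m$ I associate the random Young measure $\nu_m(\omega, x) := \delta_{{\bf V}_m(\omega, x)}$, viewed as a random variable with values in $\mathcal{Y} := (L^\infty_{w^*}(\mathcal{D}; \mathcal{P}(\R^P)), w^*)$. Since by construction each $\nu_m(\omega)$ is a field of probability measures, it lies in a fixed weak-$*$ compact subset of the unit ball of $(L^1(\mathcal{D}; C_0(\R^P)))^*$ by Banach--Alaoglu, so the laws $\mathcal{L}(\nu_m)$ on $\mathcal{Y}$ are automatically tight.

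Next, I consider the joint laws of $({\bf V}_m, \nu_m)$ on $(L^p(\mathcal{D}; \R^P), w) \times \mathcal{Y}$; tightness of the first component follows from the uniform moment bound $\E \|{\bf V}_m\|^p_{L^p(\mathcal{D})} \le C$ via Chebyshev and the fact that weak $L^p$-balls are weakly compact. I then apply Jakubowski's extension of Skorokhod's representation theorem---required because $\mathcal{Y}$ is quasi-Polish but not Polish---to extract a subsequence (not relabelled) and produce, on the standard Lebesgue space $\bigl([0,1], \overline{\mathcal{B}[0,1]}, \mathcal{L}_\R\bigr)$, copies $\tilde{\bf V}_m \sim_d {\bf V}_m$ together with a limiting random Young measure $\tilde{\mathcal{V}}^\omega$, such that the Dirac Young measures $\delta_{\tilde{\bf V}_m(\omega, \cdot)} \to \tilde{\mathcal{V}}^\omega$ almost surely in the weak-$*$ topology of $\mathcal{Y}$.

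For the Carath\'eodory identification, the $L^p$ bound transfers to $\tilde{\bf V}_m$ by equality in law, so $\tilde{\bf V}_m \in L^p(\mathcal{D}; \R^P)$ $\mathcal{L}_\R$-a.s. Given a Carath\'eodory $H$ with $|H(x, Z)| \le C(1 + |Z|^q)$ and $1 \le q < p$, the family $\{H(\cdot, \tilde{\bf V}_m(\omega, \cdot))\}_m$ is uniformly bounded in $L^{p/q}(\mathcal{D})$ for almost every $\omega$, hence relatively weakly compact. The pathwise deterministic fundamental theorem of Young measures identifies any weak $L^{p/q}$ limit as $\overline{H}(x) = \langle \tilde{\mathcal{V}}^\omega_x; H(x, \cdot)\rangle$, and uniqueness of this limit upgrades subsequential convergence to convergence of the whole extracted sequence.

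The main obstacle is handling measurability in the non-metrisable space $\mathcal{Y}$ and justifying a Skorokhod representation there; this is precisely why Jakubowski's version is invoked. It requires a countable family of continuous real functions on $\mathcal{Y}$ separating points, which is supplied by the evaluations $\nu \mapsto \int_\mathcal{D} \int_{\R^P} \varphi(x, z) \, \D \nu_x(z) \, \D x$ indexed by a countable dense family $\varphi$ in the separable space $L^1(\mathcal{D}; C_0(\R^P))$. A secondary care point is the strict inequality $q < p$: it guarantees the uniform integrability of $H(\cdot, \tilde{\bf V}_m)$ in $L^{p/q}$ and thus rules out concentration, so that the weak limit is genuinely represented by $\tilde{\mathcal{V}}^\omega$ with no additional defect measure --- exactly the reason concentration defect terms appear at the critical exponent $q = p$ later in the paper.
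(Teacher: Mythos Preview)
The paper does not give its own proof of this lemma; it simply states the result and writes ``For a proof, we refer to the monograph by Breit et.\ al.\ \cite[Section~2.8]{BrFeHobook}.'' Your sketch is correct and is in fact the argument found in that reference: lift the sequence to Dirac Young measures, use that the set of Young measures is weak-$*$ compact in $(L^1(\mathcal{D};C_0(\R^P)))^*$ to get automatic tightness, apply the Jakubowski--Skorokhod theorem on the quasi-Polish product space, and invoke the deterministic fundamental theorem of Young measures pathwise. Two small points worth tightening in a full write-up: (i) the limit Young measure $\tilde{\mathcal V}^\omega_x$ is a priori only sub-probability-valued after Banach--Alaoglu, and one uses the transferred moment bound $\E\|\tilde{\bf V}_m\|_{L^p}^p\le C$ to rule out mass escaping to infinity; (ii) your claim that $\{H(\cdot,\tilde{\bf V}_m(\omega,\cdot))\}_m$ is bounded in $L^{p/q}$ for a.e.\ $\omega$ relies on the a.s.\ weak-$L^p$ convergence you obtained from Skorokhod together with Banach--Steinhaus, not on the expectation bound alone.
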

\noindent Usually, Young measure theory plays an important role while extracting limits of bounded continuous functions. However, in our context, we have to deal with nonlinear function $H$ which may not be continuous, but instead, enjoys the following bound: 
\begin{equation*}
\E \Big[ \|H({\bf V}_m)\|^p_{L^1(\mathcal{D})}\Big] \le C, \,\, \text{for a certain}\,\, p\in(1,\infty), \, \mbox{uniformly in } m.
\end{equation*}
In fact, in this situation, it is customary to embed the function space $L^1(\mathcal{D})$ into the space of bounded Radon measures $\mathcal{M}_b(\mathcal{D})$ to characterize the limit object. Indeed, we can infer that $\p$-a.s.
\begin{align*}
\mbox{weak-* limit in} \, \mathcal{M}_b(\mathcal{D})\,\, \mbox{of} \, \,H({\bf V}_m) = \langle \mathcal{\tilde V}^{\omega}_{x}; H\rangle \,dx+ H_{\infty},
\end{align*}
where $H_{\infty} \in \mathcal{M}_b(\mathcal{D})$, and $H_{\infty}$ is called \emph{concentration defect measure (or concentration Young measure)}. It is worth noting that, $\p$-a.s. $\langle \mathcal{\tilde V}^{\omega}_{x}; H \rangle $ is finite for a.e. $x\in \mathcal{D}$, thanks to a classical
truncation error analysis and Fatou's lemma which yield $\| \langle \mathcal{\tilde V}^{\omega}_{(\cdot)}; H\rangle\|_{L^1(\mathcal{D})} \leq C$, $\p$ almost surely. For our purpose, we shall repeatedly use the following crucial lemma concerning the concentration defect measure. A  proof of this lemma can be furnished, modulo cosmetic changes, using the same arguments given by Feireisl et. al \cite[Lemma 2.1]{Fei01}.

\begin{Lemma}
\label{lemma001}
Let $\{{\bf V}_m\}_{m> 0}$, ${\bf V}_m: \Omega \times \mathcal{D} \rightarrow \mathbb{R}^P$ be a sequence generating a Young measure $\{\mathcal{V}^{\omega}_y\}_{y\in \mathcal{D}}$, where $\mathcal{D}$ is a measurable set in $\mathbb{R}^Q \times (0,T)$. Let $H^1: \mathbb{R}^P \rightarrow [0,\infty)$
be a continuous function such that
\begin{equation*}
\sup_{m >0} \E\Big[\|H^1({\bf V}_m)\|^p_{L^1(\mathcal{D})} \big]< +\infty, \, \text{for a certain}\,\, p\in(1,\infty),
\end{equation*}
and let $H^2$ be a continuous function such that
\begin{equation*}
H^2: \mathbb{R}^P \rightarrow \mathbb{R}, \quad |H^2(\bm{z})|\leq H^1(\bm{z}), \mbox{ for all } \bm{z}\in \mathbb{R}^P.
\end{equation*}
Let us denote $\p$-a.s.
\begin{equation*}
{H^1_{\infty}} := {\widetilde{H^1}}- \langle \mathcal{\tilde V}^{\omega}_{y}, H^1(\textbf{v}) \rangle \,dy, \quad 
{H^2_{\infty}} := {\widetilde{H^2}}- \langle \mathcal{\tilde V}^{\omega}_{y}, H^2(\textbf{v}) \rangle \,dy.
\end{equation*}
Here ${\widetilde{H^1}}, {\widetilde{H^2}} \in \mathcal{M}_b(\mathcal{D})$ are weak-$*$ limits of $\{H^1({\bf V}^m)\}_{m > 0}$, $\{H^2({\bf V}^m)\}_{m > 0}$ respectively in $\mathcal{M}_b(\mathcal{D})$. Then $|H^2_{\infty}| \leq H^1_{\infty}$ almost surely.
\end{Lemma}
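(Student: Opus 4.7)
\emph{Plan.} My plan is to reduce the statement to the truncated, bounded case, where the classical Young-measure identification from Lemma~\ref{lem001} applies directly, and then transfer the pointwise dominance $|H^2|\le H^1$ to the concentration defects by exploiting the preservation of inequalities under weak-$*$ limits in $\mathcal{M}_b(\mathcal{D})$. Throughout I would argue for $\p$-a.e.\ fixed $\omega\in\Omega$, so the problem becomes purely measure-theoretic.

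\textbf{Step 1 (truncation).} For $k\in\N$ I would introduce the continuous odd truncation $T_k:\R\to[-k,k]$, $T_k(s):=\operatorname{sign}(s)\min(|s|,k)$, so that $T_k\circ H^i$ is continuous and bounded for $i=1,2$. A short case analysis on the regions $\{H^1\le k\}$ and $\{H^1>k\}$, using $H^1\ge 0$ and $|H^2|\le H^1$, would produce the key pointwise inequality
$$
\bigl|H^2(\bm{z})-T_k(H^2(\bm{z}))\bigr|\;\le\;H^1(\bm{z})-T_k(H^1(\bm{z})),\qquad \bm{z}\in\R^P.
$$

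\textbf{Step 2 (bounded part and remainder measures).} Because $T_k\circ H^i$ is bounded and continuous, Lemma~\ref{lem001} would give
$$
T_k\bigl(H^i({\bf V}_m)\bigr)\ \rightharpoonup\ \bigl\langle \mathcal{V}^\omega_y\,,\,T_k(H^i)\bigr\rangle\qquad\text{weakly-$*$ in }L^\infty(\mathcal{D}),
$$
hence also weakly-$*$ in $\mathcal{M}_b(\mathcal{D})$. Splitting $H^i({\bf V}_m)=T_k(H^i({\bf V}_m))+(H^i({\bf V}_m)-T_k(H^i({\bf V}_m)))$ and combining with the assumed convergence $H^i({\bf V}_m)\rightharpoonup^{*}\widetilde{H^i}$, I would define the remainder measures
$$
R^i_k\;:=\;\widetilde{H^i}-\bigl\langle \mathcal{V}^\omega_y\,,\,T_k(H^i)\bigr\rangle\,dy\ \in\ \mathcal{M}_b(\mathcal{D}),
$$
which are precisely the weak-$*$ limits of $H^i({\bf V}_m)-T_k(H^i({\bf V}_m))$. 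From Step~1 the sequences $(H^1({\bf V}_m)-T_k(H^1({\bf V}_m)))\pm(H^2({\bf V}_m)-T_k(H^2({\bf V}_m)))$ are non-negative, and since non-negativity is preserved under weak-$*$ limits in $\mathcal{M}_b$, this yields $R^1_k\pm R^2_k\ge 0$, i.e.\ $|R^2_k|\le R^1_k$ as Borel measures on $\mathcal{D}$, for every fixed $k$.

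\textbf{Step 3 (passage $k\to\infty$).} The uniform $L^1$ bound on $H^1({\bf V}_m)$, together with Lemma~\ref{lem001} and Fatou's lemma, would ensure $\bigl\langle\mathcal{V}^\omega_y\,,\,H^1\bigr\rangle<\infty$ for a.e.\ $y\in\mathcal{D}$. For such $y$, the truncations $T_k(H^i)$ converge to $H^i$ pointwise on $\operatorname{supp}\mathcal{V}^\omega_y$ with integrable majorant $H^1$, so dominated convergence at the Young-measure level gives $\bigl\langle\mathcal{V}^\omega_y\,,\,T_k(H^i)\bigr\rangle\to\bigl\langle\mathcal{V}^\omega_y\,,\,H^i\bigr\rangle$ pointwise a.e.\ in $y$, and a second dominated convergence on $\mathcal{D}$ upgrades this to $\|R^i_k-H^i_\infty\|_{\mathcal{M}_b(\mathcal{D})}\to 0$. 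Since total-variation convergence preserves the measure-inequality of Step~2, the bound $|H^2_\infty|\le H^1_\infty$ follows in the limit. The main obstacle is precisely this last passage: one must verify that the uniform $L^1$ moment bound transfers via Fatou to $\langle\mathcal{V}^\omega_\cdot,H^1\rangle\in L^1(\mathcal{D})$ almost surely, and that the convergence $R^i_k\to H^i_\infty$ is strong enough (total variation, not merely weak-$*$) to carry the measure inequality through the limit.
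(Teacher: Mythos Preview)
Your proposal is correct and follows precisely the standard truncation argument; the paper itself does not give a proof but defers to Feireisl et al.\ \cite[Lemma~2.1]{Fei01}, whose argument is essentially the one you outline. The ``obstacle'' you flag in Step~3 is not a real gap: the a.s.\ bound $\langle\mathcal{V}^\omega_\cdot,H^1\rangle\in L^1(\mathcal{D})$ follows from Fatou applied first in $y$ (via the truncations) and then in $\omega$ using the moment hypothesis, and the convergence $R^i_k\to H^i_\infty$ is indeed in total variation because $R^i_k-H^i_\infty$ is absolutely continuous with density $\langle\mathcal{V}^\omega_y,H^i-T_k(H^i)\rangle$, which is dominated by $\langle\mathcal{V}^\omega_y,H^1-T_k(H^1)\rangle\downarrow 0$ in $L^1(\mathcal{D})$.
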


\subsection{Basics of stochastic framework}

Here we briefly recall some aspects of the theory of stochastic analysis which are pertinent to the present work. We start by fixing a stochastic basis $(\Omega,\mf,(\mf_t)_{t\geq0},\prst, W)$ with a complete, right-continuous filtration. Here $(\Omega,\mf, \prst)$ is a complete probability space, and the stochastic process $W$ is a cylindrical $(\mf_t)$-Wiener process defined on an auxiliary separable Hilbert space $\mathfrak{U}$. It is formally given by the expansion
$$
W(t)=\sum_{k\geq 1} e_k W_k(t),
$$
where the elements $\{ W_k \}_{k \geq 1}$ are a sequence of mutually independent one dimensional standard Brownian motions relative to $(\mf_t)_{t\geq0}$ and $\{e_k\}_{k\geq 1}$ is a complete orthonormal basis of $\mathfrak{U}$.
To define the stochastic integral featured in \eqref{P1}, with nonlinear diffusion coefficient $\mathcal{G}$, assume that $\bfu: \Omega \times [0,T]\time\Omega\to L^2(\mt)$ is predictable, and let $\,\mathcal{G}({\bf u}):\mathfrak{U}\rightarrow L^2(\mt)$ be defined as follows
$$\mathcal{G}({\bf u})e_k=\mathbf{G}_k({\bf u}(\cdot)).$$
The coefficients $\mathbf{G}_{k}:\mr^3\rightarrow\mr^3$ are $C^1$-functions that satisfy
\begin{align}
\sum_{k\ge1}|\mathbf{G}_k({\bf u})|^2 \le D_0 (1+|{\bf u}|^2),\label{FG1}\\
\sum_{k\ge1}|\mathbf{G}_k({\bf u})-\mathbf{G}_k({\bf v})|^2\le D_1 |{\bf u}-\textbf{v}|^2,
\label{FG2}
\end{align}
for some real constants $D_0$, and $D_1$. Then the stochastic integral
\[
\int_0^t \mathcal{G}(\vu) \ {\rm d} W = \sum_{k \geq 1}\int_0^t \vc{G}_k (\vu) \ {\rm d} W_k
\]
is a well-defined $(\mf_t)$-martingale taking values in $L^2(\T^3)$.
Finally, since $W(t)=\sum_{k\geq 1} e_k W_k(t)$ does not converge in $\mathfrak{U}$, we define the auxiliary space $\mathfrak{U}_0\supset\mathfrak{U}$ via
$$\mathfrak{U}_0:=\bigg\{v=\sum_{k\geq1}\beta_k e_k;\;\sum_{k\geq1}\frac{\beta_k^2}{k^2}<\infty\bigg\},$$
according to the norm
$$\|v\|^2_{\mathfrak{U}_0}=\sum_{k\geq1}\frac{\beta_k^2}{k^2},\quad v=\sum_{k\geq1}\beta_k e_k.$$
Observe that the embedding $\mathfrak{U}\hookrightarrow\mathfrak{U}_0$ is Hilbert--Schmidt. Moreover, the trajectories of the Brownian motion $W$ are almost surely in $C([0,T];\mathfrak{U}_0)$, thanks to a standard martingale argument.

In order to state the existence of pathwise strong solution for stochastic incompressible Euler equations, we next describe the conditions imposed on the diffusion coefficient $\mathcal{G}$. For details, we refer to the paper by Glatt-Holtz $\&$ Vicol \cite{GHVic}. Although the below mentioned conditions appear to be rather involved, but they cover many realistic stochastic models. In what follows, let $L_2$ denotes the usual space of Hilbert-Schmidt operators, and for $ p\ge 2, m\ge0$, define 
\begin{align*}
\mathbb{L}^{m,p}=\bigg\{\sigma:\mathbb{T}^N\to\,L_2 \,\Big |\,\sigma_k(\cdot)=\sigma(\cdot)e_k\in W^{m,p},  \,\,\text{and} \sum_{|\beta|\,\le\,m}\int_{\mathbb{T}^3}|\partial^\beta \sigma_k|^p_{L_2}\, \D x\,\textless\,\infty\bigg\},
\end{align*}
which is a Banach space endowed with the norm 
\begin{align*}
\|\sigma\|_{\mathbb{L}^{m,p}}:=\sum_{|\beta|\le\,m}\int_{\mathbb{T}^3}|\partial^\beta \sigma|_{L_2}^p \, \D x =\sum_{|\beta|\le\,m}\int_{\mathbb{T}^3}\bigg(\sum_{k\ge1}|\partial^\beta\sigma_k|^2\bigg)^{p/2}\, \D x.
\end{align*} 
Consider any pair of Banach spaces $X,Y$. For an increasing, locally bounded function $\gamma(\cdot) \ge 1$, we denote the space of locally bounded maps
\begin{align*}
\text{Bnd}_{u,\text{loc}}(X, Y):=\bigg\{{\mathcal{G}}\in C(X;Y):\|{\mathcal{G}}(x)\|_Y\le \gamma(\| x\|_{L^{\infty}}) (1+\|x\|_X),\,\forall\,x\in X \bigg\}.
\end{align*}
In addition, we also define the space of locally Lipschitz functions,
\begin{align*}
\text{Lip}_{u,\text{loc}}(X, Y)=\bigg \{{\mathcal{G}}\in\text{Bnd}_{u,\text{loc}}(X,Y):
\|\mathcal{G}(x)-\mathcal{G}(y)\|_Y\le\,\gamma \Big(\| x\|_{L^{\infty}} + \| y\|_{L^{\infty}}\Big)\,\|x-y\|_X,\forall\, x,y \in X\bigg\}
\end{align*}
For the statement of local pathwise existence result (cf. Theorem \ref{existence of strong solution}), we shall fix $p\ge 2$ and an integer $m\,\textgreater\, {3}/{p}+1$, and suppose that 
\begin{align}\label{extra}
	\mathcal{G}\in\text{Lip}_{u,\text{loc}}(L^p, &\mathbb{L}^{0,p})\cap\text{Lip}_{u,\text{loc}}(W^{m+1,p}, \mathbb{L}^{m+1,p})\cap\text{Lip}_{u,\text{loc}}(W^{m+5,2}, \mathbb{L}^{m+5,2}).
	\end{align}
	
Relating to the convergence of approximate solutions, strong convergence in $\omega$ variable plays a pivotal role. To that context, we need Skorokhod embedding theorem, delivering a new probability space and new random variables, with the same laws as the original ones, converging almost surely. However, for technical reasons, we have to use a modified version of classical Skorokhod embedding theorem \cite[Corollary 2]{BrzezniakHausenblasRazafimandimby} which is stated below.
\begin{Theorem}
\label{newth}
Let $(\Omega, \mathcal{F},\mathbb{P})$ be a probability space and $H_1$ be a separable metric space. Let $H_2$ be a quasi-polish space (i.e., there is a sequence of continuous functions $h_n:H_2\to[-1,1]$ that separates points of $H_2$). Assume that $\mathcal{B}(H_1)\otimes\mathcal{H}_2$ is a sigma algebra associated with the product space $H_1\times H_2$, where $\mathcal{H}_2$ is the sigma algebra generated by the sequence of continuous functions $\{h_n\}_{n=1}^{\infty}$. Let $U_n:\Omega\to H_1\times H_2$, $n\in\mathbb{N}$, be a family of random variables, such that the sequence $\{\mathcal{L}aw(U_n):n\in\mathbb{N}\}$ is weakly convergent on $H_1\times H_2$.
For $k=1,2$, let $\pi_i:H_1\times H_2$ be the projection onto $H_i$, i.e.,
$$U=(U_1,U_2)\in H_1\times H_2\mapsto\pi_i(U)=U_i\in H_i.$$ 
Finally, let us assume that there exists a random variable $X:\Omega\to H_1$ such that $\mathcal{L}aw(\pi_1 (U_n))=\mathcal{L}aw(X),\,\forall\,n\in\mathbb{N}$.
Then, there exists a probability space $(\tilde{\Omega},\tilde{\mathcal{F}},\tilde{\mathbb{P}})$, a family of $H_1\times H_2$-valued random variables $\{\tilde{U}_n:n\in\mathbb{N}\}$, on $(\tilde{\Omega},\tilde{\mathcal{F}},\tilde{\mathbb{P}})$ and a random variable $\tilde{U}:\tilde{\Omega}\to H_1\times H_2$ such that
\begin{enumerate}
\item [(a)] $\mathcal{L}aw(\tilde{U}_n)=\mathcal{L}aw(U_n),\,\forall\,n\in\mathbb{N};$
\item [(b)]
$\tilde{U}_n\to \tilde{U}\,\text{in}\,H_1\times H_2,\,\mathbb{P}-\text{a.s.}$
\item [(c)]
$\pi_1(\tilde{U}_n)(\tilde{w})=\pi_1(\tilde{U})(\tilde{w}),\,\forall\,\tilde{w}\in\tilde{\Omega}. $
\end{enumerate}
\end{Theorem}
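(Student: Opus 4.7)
My plan is to prove the theorem in three stages: disintegrate the laws of $U_n$ against their common first marginal, apply the Jakubowski-Skorokhod representation theorem on $H_2$ \emph{parametrically} in the first variable, and then reassemble the pieces on a product probability space so that the first coordinate is literally the same random variable across all indices.

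First, since $\pi_1(U_n) \stackrel{d}{=} X$ for every $n$ and $H_2$ is quasi-Polish, I would disintegrate $\mathrm{Law}(U_n)$ over its first marginal to obtain Markov kernels $K_n : H_1 \to \mathcal{P}(H_2)$ satisfying
$$\mathrm{Law}(U_n)(dx\, dy) = \mathrm{Law}(X)(dx) \otimes K_n(x, dy).$$
The separating sequence $\{h_n\}$ supplies an embedding $(h_n)_{n\geq 1} : H_2 \hookrightarrow [-1,1]^{\mathbb{N}}$ into a Polish space, which reduces the existence of regular conditional probabilities with values in $\mathcal{P}(H_2)$ to the classical Polish case. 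The weak convergence of $\{\mathrm{Law}(U_n)\}$ together with tightness identifies a limiting kernel $K_\infty(x, dy)$.

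Second, for $\mathrm{Law}(X)$-almost every $x$, the family $\{K_n(x, \cdot)\}_{n \le \infty}$ is tight on $H_2$, so Jakubowski-Skorokhod produces an auxiliary probability space carrying $H_2$-valued random variables with the prescribed marginals and the required almost sure convergence. The crux of the argument is to carry out this construction \emph{jointly measurably in} $x$. I would work on the canonical space $\Xi := H_2^{\mathbb{N} \cup \{\infty\}}$, let $Y_n$ be the coordinate projections, and construct a measurable family of probability measures $x \mapsto \nu_x \in \mathcal{P}(\Xi)$ whose $n$-th marginal is $K_n(x, \cdot)$ and under which $Y_n(\xi) \to Y_\infty(\xi)$ for $\nu_x$-a.e.\ $\xi$. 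This is the main obstacle: the classical Jakubowski-Skorokhod proof is not parametric, so one must either inspect its construction and verify $x$-measurability of each selection step, or appeal to a measurable selection theorem such as Kuratowski-Ryll-Nardzewski applied to the analytically definable set of couplings with the required marginals and the required a.s.\ convergence — this is precisely the contribution of Brzezniak-Hausenblas-Razafimandimby.

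Third, I would enlarge the sample space to $(\tilde\Omega, \tilde{\mathcal{F}}, \tilde{\mathbb{P}}) := \bigl(\Omega \times \Xi,\, \mathcal{F} \otimes \mathcal{B}(\Xi),\, \mathbb{P} \ltimes \nu_{X(\cdot)}\bigr)$, where $\mathbb{P} \ltimes \nu_{X(\cdot)}$ denotes the semi-direct product measure, and set
$$\tilde{U}_n(\omega, \xi) := \bigl(X(\omega),\, Y_n(\xi)\bigr), \qquad \tilde{U}(\omega, \xi) := \bigl(X(\omega),\, Y_\infty(\xi)\bigr).$$
Conclusion (c) is automatic by construction. Conclusion (a) is a routine unfolding: for bounded measurable $f$ on $H_1 \times H_2$,
$$\tilde{\mathbb{E}}[f(\tilde{U}_n)] = \int_\Omega \int_\Xi f(X(\omega), Y_n(\xi))\, d\nu_{X(\omega)}(\xi)\, d\mathbb{P}(\omega) = \int_{H_1}\!\int_{H_2} f(x,y)\, K_n(x, dy)\, d\mathrm{Law}(X)(x) = \mathbb{E}[f(U_n)].$$
Conclusion (b) follows by Fubini from the $\nu_x$-almost sure convergence of $Y_n \to Y_\infty$ for $\mathrm{Law}(X)$-a.e.\ $x$. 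The entire argument hinges on the measurable dependence of the Skorokhod coupling on $x$, which is the only genuinely nontrivial point.
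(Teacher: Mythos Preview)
The paper does not prove this theorem; it is quoted from the literature (attributed to \cite[Corollary~2]{BrzezniakHausenblasRazafimandimby}) and used as a black box in Proposition~\ref{prop:skorokhod1}. So there is no ``paper's own proof'' to compare against.

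That said, your sketch contains a genuine gap that would break the argument. In Step~2 you apply Jakubowski--Skorokhod to the family $\{K_n(x,\cdot)\}_{n\le\infty}$ \emph{for fixed} $x$, which requires $K_n(x,\cdot)\Rightarrow K_\infty(x,\cdot)$ weakly for $\mathrm{Law}(X)$-a.e.\ $x$. But weak convergence of the joint laws $\mathrm{Law}(U_n)$ with a common first marginal does \emph{not} imply pointwise weak convergence of the disintegration kernels. A standard counterexample on $[0,1]^2$: take $\nu=\mathrm{Leb}$ and $K_n(x,\cdot)=\delta_{\{nx\}}$ (fractional part). The joint laws converge weakly to Lebesgue on the square by equidistribution, yet for irrational $x$ the sequence $K_n(x,\cdot)$ does not converge at all. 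So your parametric Skorokhod step has no input to act on, and the difficulty is not merely the $x$-measurability of the coupling you flag at the end --- the coupling need not exist pointwise.

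The proofs in the cited sources avoid this by not disintegrating. Roughly, one applies the Jakubowski embedding to the \emph{whole} product $H_1\times H_2$ (via the separating functions) to obtain $\tilde U_n\to\tilde U$ $\tilde{\mathbb P}$-a.s., and then exploits that $\pi_1(\tilde U_n)\to\pi_1(\tilde U)$ a.s.\ in a separable metric space with all $\pi_1(\tilde U_n)$ sharing the same law; a further change of probability space (or a diagonal/relabelling argument on the canonical space $[0,1]$) then forces $\pi_1(\tilde U_n)=\pi_1(\tilde U)$ identically. If you want to salvage the disintegration route, you would need to replace pointwise kernel convergence by a joint coupling argument on $H_1\times H_2^{\mathbb N\cup\{\infty\}}$ that builds the a.s.\ convergence globally rather than fibre by fibre.
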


Finally, we recall the celebrated Kolmogorov continuity thereom related to the existence of continuous modifications of stochastic processes.

\begin{Lemma}
\label{lemma01}
Let $Z={\lbrace Z(t) \rbrace}_{t \in [0,T]} $ be a real-valued stochastic process defined on a complete filtered  probability space $(\Omega,\mf,(\mf_t)_{t\geq0},\prst)$. Suppose that there are constants $a >1, b >0$, and $C>0$ such that for all $s,t \in [0,T]$,
\begin{align*}
\E[|Z(t)-Z(s)|^a] \le C |t-s|^{1 + b}.
\end{align*}
Then there exists a continuous modification of the stochastic process $Z$ and the paths of $Z$ are $c$-H\"{o}lder continuous, for every $c \in [0, \frac{b}{a})$.
\end{Lemma}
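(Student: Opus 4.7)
The plan is to follow the classical chaining argument on dyadic rationals, combined with Borel--Cantelli and Chebyshev's inequality, to obtain almost-sure Hölder regularity on a dense set, and then extend by uniform continuity. First I would apply Markov's inequality to the hypothesis to obtain, for every $\varepsilon>0$,
\begin{equation*}
\mathbb{P}\bigl(|Z(t)-Z(s)|>\varepsilon\bigr)\;\le\;\frac{C\,|t-s|^{1+b}}{\varepsilon^{a}}.
\end{equation*}
Fix $c\in[0,b/a)$ and set $D_n=\{kT/2^{n}:0\le k\le 2^{n}\}$, $D=\bigcup_n D_n$. Applying the inequality along consecutive dyadic pairs $s=(k-1)T/2^n$, $t=kT/2^n$ with threshold $\varepsilon_n=2^{-cn}$ and summing over $k$ yields
\begin{equation*}
\mathbb{P}\Bigl(\max_{1\le k\le 2^{n}}\bigl|Z(kT/2^{n})-Z((k-1)T/2^{n})\bigr|>2^{-cn}\Bigr)\;\le\;C\,T^{1+b}\,2^{-n(b-ca)}.
\end{equation*}
Since $b-ca>0$, these probabilities are summable; Borel--Cantelli therefore gives an event $\Omega^{*}$ of full probability on which there exists a random $N(\omega)$ with
\begin{equation*}
\max_{1\le k\le 2^{n}}\bigl|Z(kT/2^{n})-Z((k-1)T/2^{n})\bigr|\;\le\;2^{-cn}\qquad\text{for all }n\ge N(\omega).
\end{equation*}

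Next, on $\Omega^{*}$ I would run the standard dyadic chaining: for any two dyadic points $s<t$ in $D$ with $|t-s|\le 2^{-N(\omega)}$, choose $n$ with $2^{-(n+1)}<|t-s|\le 2^{-n}$ and write the path from $s$ to $t$ as a chain of at most two $D_{n}$-intervals followed by finitely many refinements in $D_{n+1},D_{n+2},\ldots$, each contributing at most $2\cdot 2^{-cm}$. Telescoping gives
\begin{equation*}
|Z(t)-Z(s)|\;\le\;K\,|t-s|^{c},
\end{equation*}
for a random constant $K(\omega)$, whenever $s,t\in D$. Consequently $t\mapsto Z(t,\omega)$ is uniformly continuous on $D$ for every $\omega\in\Omega^{*}$, and I define
\begin{equation*}
\tilde Z(t,\omega)\;:=\;\lim_{\substack{q\to t\\ q\in D}}Z(q,\omega)\quad\text{on }\Omega^{*},\qquad \tilde Z(t,\omega):=0\text{ elsewhere}.
\end{equation*}
By construction $\tilde Z$ has $c$-Hölder sample paths for every $c\in[0,b/a)$.

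Finally, I would verify that $\tilde Z$ is a modification of $Z$. For any fixed $t\in[0,T]$, pick $q_{n}\in D$ with $q_{n}\to t$; the hypothesis implies $\mathbb{E}[|Z(q_{n})-Z(t)|^{a}]\to 0$, so $Z(q_{n})\to Z(t)$ in probability, while $Z(q_{n})\to\tilde Z(t)$ almost surely on $\Omega^{*}$. Passing to a subsequence that converges a.s.\ gives $Z(t)=\tilde Z(t)$ $\mathbb{P}$-almost surely, as required. The only genuinely delicate step is the chaining bookkeeping in the second paragraph, but since the dyadic rate $2^{-n(b-ca)}$ is strictly geometric for every $c<b/a$ this is routine; measurability of $\tilde Z$ follows from its definition as a pointwise limit of measurable functions.
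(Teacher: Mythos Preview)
Your proof is correct and is precisely the classical chaining argument for the Kolmogorov continuity criterion. The paper, however, does not supply a proof of this lemma at all: it merely \emph{recalls} the result as ``the celebrated Kolmogorov continuity theorem'' and uses it as a black box in the tightness arguments (Propositions~\ref{prop:rhotight} and~\ref{rhoutight13}). So there is nothing to compare against; you have filled in the standard textbook proof where the authors chose to cite the result without argument.
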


\subsection{Stochastic incompressible Euler equations}

As we mentioned before, we are primarily interested in establishing weak (measure-valued)--strong uniqueness principle for dissipative measure-valued solutions to \eqref{P1}. Since such an argument requires the existence of strong solution, therefore, we first recall the notion of local strong pathwise solution for stochastic incompressible Euler equations. We remark that such a solution can be constructed on any given stochastic basis, that is, solutions are probablistically strong, and satisfies the underlying equation \eqref{P1} pointwise (not only in the sense of distributions), that is, solutions are srtong from the PDE standpoint. Existence of such a solution was first established by Glatt-Holtz $\&$ Vicol in \cite{GHVic}. 
\begin{Definition}[Local strong pathwise solution] 
\label{def:strsol}
Let $\StoB$ be a stochastic basis with a complete right-continuous filtration, and ${W}$ be an $(\mathbb{F}_t) $-cylindrical Wiener process. Let $\bfu_0$ be a $W^{m,p}(\T^3)$-valued $\mathbb{F}_0$-measurable random variable, and let $\mathcal{G}$ satisfy \eqref{extra}. Then
$(\vu,\mathfrak{t})$ is said to be a local strong pathwise solution to the system \eqref{P1} provided
\begin{enumerate}[(a)]
\item $\mathfrak{t}$ is an a.s. strictly positive  $(\mathbb{F}_t)$-stopping time;
\item the velocity $\vu$ is a $W_{\dv}^{m,p}(\mt)$-valued $(\mathbb{F}_t)$-predictable measurable process satisfying
$$ \vu(\cdot\wedge \mathfrak{t}) \in   C([0,T]; W^{m,p}_{\Div}(\mt))\quad \mathbb{P}\text{-a.s.};$$
\item for all $t\,\ge\,0$,
\begin{equation}
\begin{split}
\vu (t \wedge \mathfrak{t})&= \vu_0- \int_0^{t \wedge \mathfrak{t}}\mathcal{P}_H(\vu \cdot\nabla\vu )\dif s + \int_0^{t \wedge \mathfrak{t}}\mathcal{P}_H{\mathcal{G}}(\vu ) \ \D W.
\end{split}
\end{equation}
\end{enumerate}
\end{Definition}
\begin{Remark}
By the property of Projection operator \eqref{property1}, we can recast the item $(c)$ in Definition~\ref{def:strsol} as follows:
\begin{enumerate}
\item[$(c^\prime)$] for all $t\in[0,T]$,
\begin{equation}
\begin{split}
\langle\vu (t \wedge \mathfrak{t}),\psi\rangle&=\langle \vu_0, \psi\rangle- \int_0^{t \wedge \mathfrak{t}}\langle \vu \cdot\nabla\vu ,\psi \rangle\dif s + \int_0^{t \wedge \mathfrak{t}}\langle{\mathcal{G}}(\vu ),\psi\rangle \ \D W,
\end{split}
\end{equation}
	for all ${\psi}\in C_{\text{div}}^\infty(\mathbb{T}^3)$.
\end{enumerate}
\end{Remark}

It is evident that classical solutions require spatial derivatives of the velocity field $\vu$ to be continuous $\prst$-a.s. This motivates the following definition.

\begin{Definition}[Maximal strong pathwise solution]\label{def:maxsol}
Fix an initial condition, and a complete stochastic basis with a cylindrical Wiener process as in Definition \ref{def:strsol}. Then a triplet $$(\vu,(\tau_L)_{L\in\mn},\mathfrak{t})$$ is said to be a maximal strong pathwise solution to system \eqref{P1} provided

\begin{enumerate}[(a)]
\item $\mathfrak{t}$ is an a.s. strictly positive $(\mathbb{F}_t)$-stopping time;
\item $(\tau_L)_{L\in\mn}$ is an increasing sequence of $(\mathbb{F}_t)$-stopping times such that
$\tau_L<\mathfrak{t}$ on the set $[\mathfrak{t}<T]$,
$\displaystyle{\lim_{L\to\infty}\tau_L}=\mathfrak t$ a.s. and
\begin{equation}\label{eq:blowup}
\sup_{t\in[0,\tau_L]}\|\vu(t)\|_{W^{1,\infty}(\T^3)}\geq L\quad \text{on}\quad [\mathfrak{t}<T] ;
\end{equation}
\item each pair $(\vu,\tau_L)$, $L\in\mn$,  is a local strong pathwise solution in the sense  of Definition \ref{def:strsol}.
\end{enumerate}
\end{Definition}

In view of the above definitions, we are now in a position to state relevant existence theorems. For a proof, we refer to the work by Glatt-Holtz $\&$ Vicol \cite{GHVic}.
\begin{Theorem}[\textbf{Local existence for nonlinear multiplicative noise}]
\label{existence of strong solution}
Let $\StoB$ be a stochastic basis with a complete right-continuous filtration. Suppose that $p\ge 2$, and $m\,\textgreater\,3/p+1$. Let ${W}$ be an $(\mathbb{F}_t) $-cylindrical Wiener process and $\bfu_0$ be a $W^{m,p}(\T^3)$-valued $\mathbb{F}_0$-measurable random variable, and let $\mathcal{G}$ satisfy \eqref{extra}. Then there exists a unique maximal strong pathwise solution
$(\vu,(\tau_L)_{L\in\mn},\mathfrak{t})$ of \eqref{P1} in the sense of Definition \ref{def:maxsol}.
\end{Theorem}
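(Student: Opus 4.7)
The plan is to construct a local solution via a truncation scheme, derive $W^{m,p}$ a priori estimates, and then patch local solutions to obtain the maximal time of existence. First I would introduce a smooth cut-off $\chi_R:[0,\infty)\to[0,1]$ with $\chi_R\equiv 1$ on $[0,R]$ and $\chi_R\equiv 0$ on $[2R,\infty)$, and consider the truncated equation
\begin{equation*}
\D \vu^R + \chi_R(\|\vu^R\|_{W^{1,\infty}})\,\mathcal{P}_H(\vu^R\cdot\nabla\vu^R)\,\dt = \chi_R(\|\vu^R\|_{W^{1,\infty}})\,\mathcal{P}_H\mathcal{G}(\vu^R)\,\D W.
\end{equation*}
The hypothesis $m>3/p+1$ yields the Sobolev embedding $W^{m,p}\hookrightarrow W^{1,\infty}$, so after truncation the drift is globally Lipschitz on bounded sets of $W^{m,p}_{\Div}$. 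Combined with the Lipschitz hypothesis \eqref{extra} on $\mathcal{G}$, this permits a classical Banach-space contraction argument (Picard iteration in $L^q(\Omega;C([0,T];W^{m,p}_{\Div}))$) producing a unique global-in-time, probabilistically strong solution $\vu^R$ to the truncated system.

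Next I would derive uniform-in-$\omega$ a priori bounds. Applying an Itô-type formula to $\tfrac{1}{p}\|\vu^R\|_{W^{m,p}}^p$ (either via a smooth approximation of the $W^{m,p}$-norm or by working componentwise with the dual pairing against $|\partial^\beta \vu^R|^{p-2}\partial^\beta \vu^R$, $|\beta|\le m$), the advection piece produces a commutator that is controlled by the Kato--Ponce estimate
\begin{equation*}
\sum_{|\beta|\le m}\Big|\big\langle \partial^\beta(\vu^R\cdot\nabla\vu^R)-\vu^R\cdot\nabla\partial^\beta\vu^R,\,|\partial^\beta \vu^R|^{p-2}\partial^\beta \vu^R\big\rangle\Big|\le C\|\nabla\vu^R\|_{L^\infty}\|\vu^R\|_{W^{m,p}}^p,
\end{equation*}
while the remaining transport piece vanishes by $\Div\vu^R=0$. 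The quadratic-variation correction and stochastic integral are handled by \eqref{FG1}--\eqref{FG2} and the Burkholder--Davis--Gundy inequality. Defining $\tau_R:=\inf\{t\ge0:\|\vu^R(t)\|_{W^{1,\infty}}\ge R\}$, the cut-off is inactive on $[0,\tau_R]$ and the factor $\|\nabla\vu^R\|_{L^\infty}$ is bounded by $CR$ there, so a Gronwall argument yields $\E\sup_{t\le \tau_R}\|\vu^R(t)\|_{W^{m,p}}^p\le C(R,T,\|\vu_0\|_{W^{m,p}})$. The pair $(\vu^R|_{[0,\tau_R]},\tau_R)$ is then a local strong pathwise solution in the sense of Definition~\ref{def:strsol}.

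Uniqueness at the local level is obtained by writing the equation for $\vc{w}=\vu_1-\vu_2$ of two such solutions, applying Itô to $\|\vc{w}\|_{L^2}^2$, exploiting $\big\langle(\vu_1\cdot\nabla)\vc{w},\vc{w}\big\rangle=0$ and \eqref{FG2}, and closing the Gronwall estimate up to a common stopping time where $\|\nabla\vu_i\|_{L^\infty}$ remains bounded. To assemble the maximal object $(\vu,(\tau_L)_L,\mathfrak{t})$ of Definition~\ref{def:maxsol}, I would note that by uniqueness the solutions $\vu^R$ coincide on the intersection of their intervals of definition, define $\tau_L$ as the first hitting time of $L$ by $\|\vu\|_{W^{1,\infty}}$, and set $\mathfrak{t}:=\lim_{L\to\infty}\tau_L$; the blow-up alternative \eqref{eq:blowup} is built into the construction of $\tau_L$.

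The chief obstacle is the a priori bound in $W^{m,p}$: because $W^{m,p}$ is only a Banach space for $p\neq 2$, one cannot directly invoke the Hilbert-space Itô formula. The only options are to work with a smooth convex approximation of $x\mapsto\|x\|_{W^{m,p}}^p$ or to reduce to $L^p$-estimates on $\partial^\beta \vu^R$ and use duality, both of which require the Kato--Ponce commutator estimate in its sharp form and careful algebra-type bounds on the Itô correction $\tfrac{1}{2}\sum_k \mathbf{G}_k'(\vu^R)\mathbf{G}_k(\vu^R)$. The stronger Lipschitz hypotheses imposed in \eqref{extra} on $\mathbb{L}^{m+1,p}$ and $\mathbb{L}^{m+5,2}$ are precisely what is needed to absorb these terms, and compatibility between the truncation parameter $R$ and these high-regularity noise estimates is the delicate bookkeeping step.
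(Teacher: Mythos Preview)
The paper does not give its own proof of this theorem: immediately before the statement it writes ``For a proof, we refer to the work by Glatt-Holtz \& Vicol \cite{GHVic}.'' So there is nothing to compare against except the argument in that reference, whose overall architecture (cut-off in $W^{1,\infty}$, $W^{m,p}$ a priori estimates via Kato--Ponce commutators, $L^2$-uniqueness, gluing to a maximal solution) your proposal reproduces correctly.

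There is, however, one genuine gap in your construction of the truncated solution $\vu^R$. You claim that after cutting off in $\|\cdot\|_{W^{1,\infty}}$ the drift $\chi_R(\|\vu^R\|_{W^{1,\infty}})\,\mathcal{P}_H(\vu^R\cdot\nabla\vu^R)$ becomes globally Lipschitz on bounded sets of $W^{m,p}_{\Div}$, and that this suffices for a Picard iteration in $L^q(\Omega;C([0,T];W^{m,p}_{\Div}))$. This does not work: even with the cut-off active, the map $\vu\mapsto \vu\cdot\nabla\vu$ loses one derivative and only sends $W^{m,p}$ into $W^{m-1,p}$, so the iteration map does not take the ball in $C([0,T];W^{m,p}_{\Div})$ back into itself. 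The truncation bounds the size of the advecting velocity, not the loss of regularity in the bilinear term.

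In \cite{GHVic} this is handled not by a direct fixed point in $W^{m,p}$ but by an additional layer of approximation: one either adds a hyperviscous regularisation $\epsilon(-\Delta)^r$ with $r$ large enough that the semigroup gains back the lost derivative, or one passes through a Galerkin projection onto finite-dimensional subspaces (where existence is elementary SDE theory). Uniform $W^{m,p}$ estimates---exactly the Kato--Ponce/BDG/Gronwall argument you describe---are then derived on the approximants, and compactness plus the $L^2$-uniqueness you outline allow passage to the limit. The extra regularity in \eqref{extra} (the $\mathbb{L}^{m+5,2}$ hypothesis) is used precisely to run this smoothing-layer construction. Once you insert such an approximation step between the cut-off equation and the contraction argument, the rest of your plan goes through as written.
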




\subsection{Stochastic incompressible Navier--Stokes equations}
	

There is a large and intense literature concerning the incompressible Navier--Stokes equations driven by noise, starting with the work by Flandoli et. al \cite{FlandoliGatarek} where the authors proved the existence of weak martingale solutions to \eqref{P1NS}. As expected, these solutions are weak in both analytical and probabilistic sense. However, to prove existence of dissipative measure-valued solutions for stochastic Euler equations, we first need to introduce the concept of \emph{finite energy weak martingale} solutions to \eqref{P1NS}. Note that, such solutions exist globally in time, and the time-evolution of the energy for such solutions can be controlled in terms of its initial state.

\begin{Definition}[Finite energy weak martingale solution]
\label{def:weakMartin}
Let $\Lambda_{\varepsilon}$ be a Borel probability measure on $L_{\text{div}}^2(\mathbb{T}^3)$. Then $\big[ \big(\Omega_{\varepsilon},\mathbb{F}_{\varepsilon}, (\mathbb{F}_{{\varepsilon},t})_{t\geq0},\mathbb{P}_{\varepsilon} \big);\mathbf{u}_{\varepsilon}, W_{\varepsilon} \big]$ is a weak martingale solution of \eqref{P1NS} if
\begin{enumerate}[(a)]
\item $ \big(\Omega_{\varepsilon},\mathbb{F}_{\varepsilon}, (\mathbb{F}_{{\varepsilon},t})_{t\geq0},\mathbb{P}_{\varepsilon} \big)$ is a stochastic basis with a complete right-continuous filtration,
\item $W_{\varepsilon}$ is a $(\mathbb{F}_{{\varepsilon},t})$-cylindrical Wiener process,
\item the velocity field $\mathbf{u}_{\varepsilon}$ is $L^2_{\dv}(\mathbb{T}^3)$-valued progressively measurable process and $\mathbb{P}-$a.s. 
\begin{align*}\vu (\cdot,\omega)\in C ([0,T];H^{-2\alpha}_{\dv}(\mathbb{T}^3))\cap L^{\infty}(0,T;L^2_{\dv}(\T^3))\cap L^2(0,T;H^1_{\dv}(\T^3))\end{align*}
\item $\Lambda_{\varepsilon}=\mathbb{P}_{\varepsilon}\circ \big[\mathbf{u}_{\varepsilon}(0) \big]^{-1}$,
\item for all $\bm{\varphi}\in  H^{2\alpha}_{\dv}(\mathbb{T}^3)$,
 we have
\begin{equation}\label{eq:energy}
\begin{aligned}
\langle \mathbf{u}_{\varepsilon}(t), \bm\varphi\rangle &= \langle \mathbf{u}_{\varepsilon} (0), \bm{\varphi}\rangle -\int_0^{t}\langle B(\mathbf{u}_{\varepsilon}(s),\mathbf{u}_{\varepsilon}(s)),\bm{\varphi}\rangle\,\mathrm{d}s
+{\varepsilon}\,\int_0^{t} \langle \Delta \mathbf{u}_{\varepsilon}(s)\,,  \bm{\varphi}\rangle\mathrm{d}s 
+\int_0^{t}\langle\mathcal{P}_H\mathcal{G}(\mathbf{u}_{\varepsilon}), \bm{\varphi}\rangle\,\mathrm{d}W
\end{aligned}
\end{equation}
$\mathbb{P}$-a.s. for all $t\in[0,T]$,
\item the energy inequality
\begin{align}
\label{eq:apriorivarepsilon}
&-\int_0^T \partial_t\phi \int_{\T^3}  \frac{1}{2}| {\bf u}_{\varepsilon} |^2\, \D x\,  \D t
+ {\varepsilon}\, \int_0^T \phi \int_{\T^3}|\nabla_x  {\bf u}_{\varepsilon}|^2 \dx\,  {\rm d}t  \\
&\leq \phi(0) \int_{\T^3} \frac{1}{2}{|\bfu_{\varepsilon}(0)|^2}
+\sum_{k=1}^\infty\int_0^T\phi\bigg(\int_{\T^3}\mathcal{P}_H\mathbf{G}_k ({\bf u}_{\varepsilon})\cdot{\bf u}_{\varepsilon}\dx\bigg){\rm d} W_k + \frac{1}{2}\sum_{k = 1}^{\infty}  \int_0^T\phi
\int_{\T^3}|\mathcal{P}_H \mathbf{G}_k ({\bf u}_{\varepsilon}) |^2 \, {\rm d}t \notag
\end{align}
holds $\p$-a.s., for all $\phi\in C_c^\infty([0,T)),\,\phi\,\ge\,0$. 

\end{enumerate}
\end{Definition}

\begin{Remark}
Note that, in view of  Skorohod \cite{sko}, it is possible to consider, 
$$
\Big(\Omega_{\varepsilon}, \mathbb{F}_{\varepsilon},  \mathbb{P}_{\varepsilon} \Big)=\Big([0,1], \mathcal{B}([0,1]), \mathcal{L}_{\R} \Big),
$$
for every $\varepsilon$. Moreover, we may assume the existence of a common Wiener space $W$ for all $\varepsilon$, thanks to a classical compactness argument applied to any chosen subsequence ${\lbrace \varepsilon_n \rbrace}_{n \in \N}$ at once. However, it is worth noticing that, it may not be possible obtain a filtration which is independent of $\varepsilon$, due to lack of pathwise uniqueness for the underlying system.
\end{Remark}
\begin{Remark}
 By using  property \eqref{property}-\eqref{property1}, we can recast the item $(e)$ in Definition \ref{def:weakMartin} as
\begin{enumerate}
	\item[$(e')$] For all $\bm{\varphi}\in C_{\text{div}}^\infty(\mathbb{T}^3)$,
\begin{equation}\label{e1q:energy1}
	\begin{aligned}
		\langle \mathbf{u}_{\varepsilon}(t), \bm\varphi\rangle = \langle \mathbf{u}_{\varepsilon} (0), \bm{\varphi}\rangle &+\int_0^{t}\langle \mathbf{u}_{\varepsilon}\otimes\mathbf{u}_{\varepsilon}(s),\nabla_x\bm{\varphi}\rangle\,\mathrm{d}s \\&
		-{\varepsilon}\,\int_0^{t} \langle \nabla_x \mathbf{u}_{\varepsilon}(s)\,,  \nabla_x\bm{\varphi}\rangle\mathrm{d}s 
		+\int_0^{t}\langle\mathcal{G}(\mathbf{u}_{\varepsilon}), \bm{\varphi}\rangle\,\mathrm{d}W
	\end{aligned}
\end{equation}
\end{enumerate}
holds $\mathbb{P}$-a.s., for all $t\in[0,T]$.
\end{Remark}

Regarding the existence of finite energy martingale solutions, one may follow the arguments given by Flandoli et. al. \cite{FlandoliGatarek} to obtain the following result.
\begin{Theorem}[\textbf{Existence of martingale solution for Naiver Stokes}]
\label{thm:Romeo1}
 Assume that $\Lambda_{\varepsilon}$ is a Borel probability measure on $L_{\rm {div}}^2(\mathbb{T}^3)$ such that the following moment estimate
$$\int_{L_{\dv}^2(\mathbb{T}^3)} {\| \mathbf{u} \|}^p_{L_{{\dv}}^2(\T^3)}\, \mathrm{d}\Lambda_{\varepsilon}(\mathbf{u})<\infty,$$
holds for all $1\leq p<\infty$. Moreover, assume that \eqref{FG1} and \eqref{FG2} hold. Then there exists a finite energy weak martingale solution of \eqref{P1NS} in the sense of Definition \ref{def:weakMartin} with initial law $\Lambda_{\varepsilon}$.
\end{Theorem}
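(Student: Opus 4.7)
The plan is to construct a finite energy weak martingale solution by a Faedo--Galerkin scheme followed by a stochastic compactness and Skorokhod-embedding argument, broadly in the spirit of Flandoli--G\c atarek. First, I would let $\{w_k\}_{k\ge1}$ be the orthonormal basis of $L^2_{\mathrm{div}}(\mathbb{T}^3)$ consisting of eigenfunctions of the Stokes operator $A$, set $H_n:=\mathrm{span}\{w_1,\dots,w_n\}$ with projection $\Pi_n$, and consider the finite-dimensional It\^o SDE in $H_n$:
\begin{equation*}
\mathrm{d}\vu_n + \Pi_n B(\vu_n,\vu_n)\,\mathrm{d}t = \varepsilon\,\Pi_n\Delta \vu_n\,\mathrm{d}t + \Pi_n \mathcal{P}_H \mathcal{G}(\vu_n)\,\mathrm{d}W,\qquad \vu_n(0)=\Pi_n \vu_0,
\end{equation*}
with $\vu_0$ sampled from $\Lambda_\varepsilon$ on the canonical stochastic basis. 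Since $B$ is locally Lipschitz on $H_n$ and $\mathcal{G}$ satisfies \eqref{FG1}--\eqref{FG2}, classical SDE theory yields existence and pathwise uniqueness of a global solution $\vu_n$ on any auxiliary basis.

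Next, I would derive uniform a priori estimates. Applying It\^o's formula to $\tfrac12\|\vu_n\|_{L^2}^2$, using the cancellation $\langle B(\vu_n,\vu_n),\vu_n\rangle=0$ and the growth bound \eqref{FG1}, together with the BDG inequality and Gronwall, one obtains
\begin{equation*}
\E\Big[\sup_{t\in[0,T]}\|\vu_n(t)\|_{L^2}^{2p} + \Big(\varepsilon\int_0^T\|\nabla \vu_n\|_{L^2}^2\,\mathrm{d}s\Big)^p\Big]\le C(p,T,\varepsilon,\Lambda_\varepsilon),\qquad p\ge 1.
\end{equation*}
Bounding $B(\vu_n,\vu_n)$ in $H^{-2\alpha}_{\mathrm{div}}$ by $\|\vu_n\|_{L^2}^2$ for $\alpha>1$ sufficiently large, and using the standard Wiener-integral fractional-regularity estimate, I then show the stochastic integral belongs to $W^{\gamma,q}(0,T;L^2_{\mathrm{div}})$ uniformly in $n$, and the deterministic part to $W^{1,2}(0,T;H^{-2\alpha}_{\mathrm{div}})$. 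By Lemma~\ref{comp}, the laws of $\vu_n$ are tight on $L^2(0,T;L^2_{\mathrm{div}})\cap C([0,T];H^{-2\alpha}_{\mathrm{div}})$, and the laws of $W$ are trivially tight on $C([0,T];\mathfrak{U}_0)$.

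Applying the Jakubowski/Skorokhod embedding theorem, I pass, along a subsequence, to a new stochastic basis carrying random variables $(\tilde\vu_n,\tilde W_n)$ with the original laws and converging $\tilde{\mathbb{P}}$-a.s.\ to a limit $(\tilde\vu,\tilde W)$ in the above path space. Setting $\tilde{\mathbb F}_t$ to be the augmented natural filtration of $(\tilde\vu(\cdot\wedge t),\tilde W(\cdot\wedge t))$, a standard argument shows $\tilde W$ is a cylindrical $(\tilde{\mathbb F}_t)$-Wiener process. For identification of the equation, I would apply the martingale-identification method: denoting by $M_n(t)$ the stochastic integral in the Galerkin equation and by $M(t)$ the candidate one, I would show that the processes $M(t)$, $M(t)^2-\int_0^t\|\mathcal{P}_H\mathcal{G}(\tilde\vu)\|_{L_2}^2\mathrm{d}s$ and $M(t)\tilde W_k(t)-\int_0^t\langle\mathcal{P}_H\mathcal{G}(\tilde\vu)e_k,\cdot\rangle\,\mathrm{d}s$ are $(\tilde{\mathbb F}_t)$-martingales; the Lipschitz bound \eqref{FG2} and the uniform moment estimates grant the uniform integrability needed to pass to the limit in the corresponding bracketed quantities for $\tilde\vu_n$, yielding \eqref{eq:energy} in its weak form $(e')$.

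Finally, for the energy inequality \eqref{eq:apriorivarepsilon}: the Galerkin solutions satisfy an exact energy identity (It\^o formula on $\tfrac12\|\tilde\vu_n\|_{L^2}^2$ multiplied by $\phi$ and integrated by parts in $t$); passage to the limit uses a.s.\ convergence in $L^2(0,T;L^2)$ for the $|\vu|^2$ term, lower-semicontinuity of the dissipation term, and convergence of the stochastic/It\^o--correction terms via \eqref{FG1}--\eqref{FG2} and the moment bounds. The main obstacle I expect is this last passage to the limit in the martingale and It\^o-correction contributions of the energy balance together with the rigorous identification of $M(t)=\int_0^t \mathcal{P}_H\mathcal{G}(\tilde\vu)\,\mathrm{d}\tilde W$, since one only controls $\tilde\vu_n\to\tilde\vu$ strongly in $L^2$ in space-time but needs to match quadratic variations on the new probability space; this is handled precisely by the martingale characterization step described above, combined with the Lipschitz assumption \eqref{FG2} on $\mathcal{G}$.
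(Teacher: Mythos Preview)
Your proposal is correct and follows essentially the same approach as the paper: the paper's proof simply cites Flandoli--G\c atarek for the existence of a weak martingale solution and remarks that the energy inequality follows from It\^o's formula as in Breit et al., and what you have written is precisely a detailed sketch of that Flandoli--G\c atarek scheme (Galerkin approximation, uniform moment bounds, tightness via Lemma~\ref{comp}, Jakubowski--Skorokhod, martingale identification, and lower-semicontinuity for the energy inequality). There is no substantive divergence between your outline and the argument the paper points to.
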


\begin{proof}
Existence proof for a weak martingale solution follows from the work of Flandoli et. al. \cite{FlandoliGatarek}. To prove the energy inequality, one may simply apply It\^o formula to obtain item (f) of Definition \ref{def:weakMartin}. Indeed, this is very similar to the recent works on compressible fluids, see Breit et. al. \cite{BrFeHobook}. The details are left to the interested reader.
\end{proof}

\begin{Remark}
(\textbf{A different form of energy inequality})
It is well-known that, one can establish weak-strong uniqueness principle only in the class of dissipative weak solutions, i.e., weak solutions satisfying an appropriate energy inequality. To that context, we make use of a different form of energy inequality for the proof of weak-strong uniqueness related to incompressible Navier--Stokes equations. 
First observe that, in view of a standard cut-off argument applied to \eqref{eq:apriorivarepsilon}, energy inequality holds for a.e. $0\le\,s\,\textless\,t \,\in(0,T):$
\begin{align}
&\int_{\T^3} \frac{1}{2}| {\bf u}_{\varepsilon}(t) |^2 \,\D x 
+ {\varepsilon}\, \int_s^t \int_{\T^3}|\nabla_x  {\bf u}_{\varepsilon}|^2 \dx\,  {\rm d}s \label{eq:apriorivarepsilon12} \\
&\quad \leq \int_{\T^3} \frac{1}{2}{|\bfu_{\varepsilon}(s)|^2} \, \D x
+\sum_{k=1}^\infty\int_s^t\bigg(\int_{\T^3}\mathcal{P}_H\mathbf{G}_k ({\bf u}_{\varepsilon})\cdot{\bf u}_{\varepsilon}\dx\bigg){\rm d} W_k + \frac{1}{2}\sum_{k = 1}^{\infty}  \int_s^t\int_{\T^3}|\mathcal{P}_H \mathbf{G}_k ({\bf u}_{\varepsilon}) |^2 \, {\rm d}s. \notag
\end{align}
It follows from \eqref{eq:apriorivarepsilon12} that the limits
$$\esslim_{\tau\to s^+}\int_{\mathbb{T}^3}\frac{1}{2}|u(\tau)|^2\, \D x,\,\,\,\esslim_{\tau\to t^-}\int_{\mathbb{T}^3}\frac{1}{2}|u(\tau)|^2\, \D x$$
exist $\mathbb{P}$-a.s. for a.a.\,\,$0\,\le\,s\,\le\,t\,\le\,T$ including $s=0$.
Finally, in view to the weak lower-semicontinuity of convex functionals, we have for any $t\in[0,T)\,\,\mathbb{P}$-a.s.
$$\liminf_{\tau\to t^-}\int_{\mathbb{T}^3}\frac{1}{2}|u(\tau)|^2\, \D x\,\ge\,\int_{\mathbb{T}^3}\frac{1}{2}|u(t)|^2\, \D x$$
By making use of the above informations, relative energy inequality \eqref{eq:apriorivarepsilon12} can be rewritten as 
\begin{align}\label{eq:apriorivarepsilon1}
\begin{aligned}
& \int_{\T^3} \frac{1}{2}| {\bf u}_{\varepsilon}(t) |^2  \, \D x
+ {\varepsilon}\, \int_0^t \int_{\T^3}|\nabla_x  {\bf u}_{\varepsilon}|^2 \dx\,  {\rm d}s\\
& \quad \leq \int_{\T^3} \frac{1}{2}{|\bfu_{\varepsilon}(0)|^2}\, \D x
+\sum_{k=1}^\infty\int_0^t\bigg(\int_{\T^3}\mathcal{P}_H\mathbf{G}_k ({\bf u}_{\varepsilon})\cdot{\bf u}_{\varepsilon}\dx\bigg){\rm d} W_k
+ \frac{1}{2}\sum_{k = 1}^{\infty}  \int_0^t
\int_{\T^3}|\mathcal{P}_H \mathbf{G}_k ({\bf u}_{\varepsilon}) |^2 \, {\rm d}s
\end{aligned}
\end{align}
holds $\mathbb P$-a.s., for all $t\in[0,T]$.
\end{Remark}

\subsection{Measure-valued martingale solutions}
In general, in view of the energy inequality \eqref{eq:apriorivarepsilon1}, solutions to incompressible Navier--Stokes equations only has a uniform energy bound, usually $L^2(\T^3)$. However, such {\it a priori} bound does not guarantee weak convergence of nonlinear terms $\vu \otimes \vu, \textbf{G}^2(\vu) \in L^1(\T^3)$, due to the presence of oscillations and concentration effects. In this scenario, one can only identify weak limits (corresponding to nonlinear terms) as a combination of Young measure and concentration measure. 

Note that the Young measures, which are probability measures on the phase space, capture oscillations in the solution. On the other hand concentration defect measures, which are measures on physical space-time, accounts for blow up type collapse due to possible concentration points. In what follows, we use two different forms of concentration defect measures. To illustrate the difference, we consider the following situation:
\begin{itemize}
\item Let $v_{\varepsilon}$ converges weakly in $L^2(\T^3)$ to a function $v$, and we assume that $\int_{\T^3} |v_{\varepsilon}|^2 \,dx \le C$.
\end{itemize}
Inspired by Banach-Alaoglu theorem, one can define a defect measure - which is a non-negative Radon measure, as 
$$
\mu_1 := \mbox{Weak -*}\lim_{\varepsilon \rightarrow 0} \Big(  |v_{\varepsilon}|^2 - |v|^2 \Big) \in \mathcal{M}^{+}_b(\Pi_T).
$$
In a similar fashion, by making use of Young measure theory, one can define another defect measure $\mu_2$ as
$$
\mu_2 := \mbox{Weak -*}\lim_{\varepsilon \rightarrow 0} \Big(  |v_{\varepsilon}|^2 - \langle \mathcal{V}_{t,x}(\lambda); |\lambda|^2 \rangle \Big) \in \mathcal{M}^{+}_b(\Pi_T).
$$
It is well-known that $\mu_1$ is too large to describe concentration effects in a useful way, while the main advantage of the concentration defect measure $\mu_2$ is that it allows to describe weak limits in terms of Young measure. It is easy to see that, thanks to H\"{o}lder inequality, $\mu_2 \le \mu_1$. We shall make use of both forms of concentration defect measures below to define a notion of measure valued solution.


\subsubsection{Dissipative measure-valued martingale solutions }
	
Keeping in mind the previous discussion, we are ready to introduce the concept of \textit{dissipative measure--valued martingale solution} to the stochastic compressible Euler system. In what follows, let $\mathcal{S} =  \R^3$ be the phase space associated to the incompressible Euler system. 

\begin{Definition}[Dissipative measure-valued martingale solution]
\label{def:dissMartin}
Let $\Lambda$ be a Borel probability measure on $L_{\text{div}}^2(\mathbb{T}^3)$. Then $\big[ \big(\Omega,\mathbb{F}, (\mathbb{F}_{t})_{t\geq0},\mathbb{P} \big); \mathcal{V}^{\omega}_{t,x}, W \big]$ is a dissipative measure-valued martingale solution of \eqref{P1}, with initial condition $\mathcal{V}^{\omega}_{0,x}$; if
\begin{enumerate}[(a)]
\item $\mathcal{V}^{\omega}$ is a random variable taking values in the space of Young measures on $L^{\infty}_{w^*}\big([0,T] \times \T^3; \mathcal{P}\big(\mathcal{S})\big)$. In other words, $\p$-a.s.
$\mathcal{V}^{\omega}_{t,x}: (t,x) \in [0,T] \times \T^3  \rightarrow \mathcal{P}(\mathcal{S})$ is a parametrized family of probability measures on $\mathcal{S}$,
\item $ \big(\Omega,\mathbb{F}, (\mathbb{F}_{t})_{t\geq0},\mathbb{P} \big)$ is a stochastic basis with a complete right-continuous filtration,
\item $W$ is a $(\mathbb{F}_{t})$-cylindrical Wiener process,
\item the average velocity $\langle \mathcal{V}^{\omega}_{t,x}; \vu  \rangle$ satisfies,  for any $\bm{\varphi}\in C_{\text{div}}^\infty(\mathbb{T}^3)$,  $t\mapsto \langle \langle \mathcal{V}^{\omega}_{t,x}; \vu  \rangle (t, \cdot),\bm\varphi\rangle\in C[0,T]$, $\mathbb{P}$-a.s., the function $t\mapsto \langle \langle \mathcal{V}^{\omega}_{t,x}; \vu  \rangle (t, \cdot),\bm{\phi}\rangle$ is progressively measurable, and for any $\bm\varphi\in C^1(\mathbb{T}^3)$,
\begin{align*}
	\int_{\mathbb{T}^3}\langle \mathcal{V}^{\omega}_{t,x}; \vu  \rangle\cdot\nabla_x \bm{\varphi}\,dx=0
\end{align*}
for almost $t\in[0,T]$, $\mathbb{P}-$a.s., and 
\begin{align*}
\mathbb{E}\, \bigg[ \sup_{t\in(0,T)}\Vert  \langle \mathcal{V}^{\omega}_{t,x}; \vu  \rangle (t, \cdot) \Vert_{L_{\dv}^2(\mt)}^p\bigg]<\infty 
\end{align*}
for all $1\leq p<\infty$,
\item $\Lambda=\mathcal{L}[\mathcal{V}^{\omega}_{0,x}]$,
\item there exists a $H_{\text{div}}^{-1}(\T^3)$-valued square integrable continuous martingale $\mathcal{M}^1_{E}$, such that the integral identity 
\begin{equation} \label{second condition measure-valued solution}
\begin{aligned}
&\int_{\T^3} \langle \mathcal{V}^{\omega}_{\tau,x};\vu \rangle \cdot \bm{\varphi}(\tau, \cdot) \,\D x - \int_{\T^3} \langle \mathcal{V}^{\omega}_{0,x};\vu \rangle \cdot \bm{\varphi}(0,\cdot) \, \D x \\
&\qquad = \int_{0}^{\tau} \int_{\T^3}\langle \mathcal{V}^{\omega}_{t,x}; {\vu \otimes \vu }\rangle: \nabla_x \bm{\varphi} \, \D x \, \D t + \int_{\T^3} \bm{\varphi}\,\int_0^{\tau} d\mathcal{M}^1_{E}(t) \,\D x+ \int_{0}^{\tau} \int_{\T^3}  \nabla_x \bm{\varphi}: d\mu_C,
\end{aligned}
\end{equation}
holds  $\p$-a.s., for all $\tau \in [0,T)$, and for all $\bm{\varphi} \in C_{\text{div}}^{\infty}(\T^3;\mathbb{R}^3)$, where $\mu_C\in L_{w*}^\infty([0,T];\mathcal{M}_b({\T^3}))$, $\p$-a.s., is a tensor--valued measure; $\mu_C$ is called \textit{concentration defect measures};
\item there exists a real-valued square integrable continuous martingale $\mathcal{M}^2_{E}$, such that the following inequality
\begin{align} \label{third condition measure-valued solution}
\begin{aligned}
 \mathrm{E}(t+)\, \leq\, &\mathrm{E}(s-)+ \frac{1}{2} \sum_{k\ge\,1}\int_s^{t} \int_{\T^3} \left\langle \mathcal{V}^{\omega}_{\tau,x};{|{\textbf G_k( u)}|^2} \right\rangle \,\, \D x\, {\rm d}\tau \\
 &\quad  -\frac{1}{2}\sum_{k\ge\,1}\int_s^t\int_{\mathbb{T}^3}\Big(\mathcal{Q}_H\left\langle \mathcal{V}^{\omega}_{\tau,x};{|{\textbf G_k( u)}|}\right\rangle\Big)^2\,\D x\,{\rm d}\tau + \frac12\int_s^{t} \int_{\T^3} d \mu_D + \int_s^{t}  d\mathcal{M}^2_{E},
\end{aligned}			
\end{align}
holds  $\p$-a.s., for all $0\,\le\,s\,\textless\,t\in (0,T)$ with
$$\mathrm{E}(t-):=\lim_{r\to 0^+}\frac{1}{r}\int_{t-r}^t\bigg(\int_{\mathbb{T}^3}\left\langle \mathcal{V}^{\omega}_{s,x};\frac{|{\bf u}|^2}{2} \right\rangle \dx +\mathcal{D}(s)\bigg)\,{\rm d}s$$
$$\mathrm{E}(t+):=\lim_{r\to 0^+}\frac{1}{r}\int_t^{t+r}\bigg(\int_{\mathbb{T}^3}\left\langle \mathcal{V}^{\omega}_{s,x};\frac{|{\bf u}|^2}{2} \right\rangle \dx +\mathcal{D}(s)\bigg)\,{\rm d}s$$
Here $\mu_D \in L_{w*}^\infty([0,T];\mathcal{M}_b({\T^3}))$, $\p$-a.s., $\mathcal{D}\in L^{\infty}(0,T)$, $\mathcal{D}\geq 0$, $\p$-almost surely, and $\E \big[ \esssup_{t \in (0,T)}\mathcal{D}(t)\big] < \infty$, with initial energy
$$
\mathrm{E}(0-)= 
\int_{\mathbb{T}^3} \frac{1}{2} |\textbf{u}_0|^2\,\dx.
$$
\item there exists a constant $C>0$ such that
\begin{equation} \label{fourth condition measure-valued solutions}
	 \int_{0}^{\tau} \int_{\T^3} d|\mu_C| + \int_{0}^{\tau} \int_{\T^3} d|\mu_D| \leq C \int_{0}^{\tau} \mathcal{D}(t) dt,
\end{equation}	
$\p$-a.s., for every $\tau \in (0,T)$.
\item For any given stochastic process $h(t)$, adapted to $(\mathbb{F}_{t})_{t\geq0}$, given by
$$
\mathrm{d}h  = D_t^dh\,\mathrm{d}t  + \mathbb{D}_t^s h\,\mathrm{d}W,
$$
satisfying
\begin{equation*} 
h \in C([0,T]; W^{1,q} \cap C (\tor)), \quad 
\E\bigg[ \sup_{t \in [0,T]} \| h \|_{W^{1,q}}^2 \bigg]^q < \infty, \quad \text{$\mathbb{P}$-a.s. for all }\ 1 \leq q < \infty,
\end{equation*}
with 
\begin{align*}
&D_t^d h \in L^q(\Omega;L^q(0,T;W^{1,q}(\mt))),\quad
\mathbb D_t^s h \in L^2(\Omega;L^2(0,T;L_2(\mathfrak U;L_{\dv}^2(\tor)))),\nonumber\\
&\bigg(\sum_{k\geq 1}|\mathbb D_t^s h(e_k)|^q\bigg)^\frac{1}{q} \in L^q(\Omega;L^q(0,T;L^{q}(\mt))),
\end{align*}
the cross variation between $h$ and the square integrable continuous martingale $M^1_E$ is given by 
\begin{align*}
\Big<\hspace{-0.14cm}\Big<h(t),  M^1_{E}(t)  \Big>\hspace{-0.14cm}\Big> 
= \sum_{i,j}\Bigg(\sum_{k = 1}^{\infty}  \int_0^t\langle \mathcal{P}_H \left\langle \mathcal{V}^{\omega}_{s,x}; \mathbf{G}_k (\vu )\right\rangle,g_i\rangle \,\langle\mathbb{D}_t^s h(e_k), g_j\rangle \,ds\Bigg) g_i\otimes g_j .
\end{align*}
Here $g_i$'s are orthonormal basis for $H^{-1}_{\dv}(\mathbb{T}^3)$ and bracket $\langle\cdot,\cdot\rangle$ denotes inner product in the same space.
\end{enumerate}
\end{Definition}

\begin{Remark}
Notice that, a standard Lebesgue point argument applied to \eqref{third condition measure-valued solution} reveals that the energy inequality holds for a.e. $0 \le s <t$ in $(0,T)$:
\begin{align}
\label{energy_001}
& \int_{\mathbb{T}^3}\left\langle \mathcal{V}^{\omega}_{t,x};\frac{|{\bf u}|^2}{2} \right\rangle \dx +\mathcal{D}(t) \, \leq\, \int_{\mathbb{T}^3}\left\langle \mathcal{V}^{\omega}_{s,x};\frac{|{\bf u}|^2}{2} \right\rangle \dx +\mathcal{D}(s) + \frac{1}{2} \sum_{k\ge\,1}\int_s^{t} \int_{\T^3} \left\langle \mathcal{V}^{\omega}_{s,x};{|{\textbf G_k( u)}|^2} \right\rangle \,\, \D x\, {\rm d}\tau \notag \\ &\quad  -\frac{1}{2}\sum_{k\ge\,1}\int_s^t\int_{\mathbb{T}^3}\Big(\mathcal{Q}_H\left\langle \mathcal{V}^{\omega}_{s,x};{|{\textbf G_k( u)}|}\right\rangle\Big)^2\,\D x\,{\rm d}\tau + \frac12\int_s^{t} \int_{\T^3} d \mu_D + \int_s^{t}  d\mathcal{M}^2_{E}, \,\,\p-a.s.
\end{align}
However, as it is evident from Section~\ref{proof2}, we require energy inequality to hold for \emph{all} $s, t \in (0,T)$ to demonstrate weak-strong uniqueness principle. 
\end{Remark}


%
%

\subsection{Statements of main results}

We now state main results of this paper. To begin with, regarding the existence of dissipative measure-valued martingale solutions, we have the following theorem.

\begin{Theorem} [\textbf{Existence of Measure-Valued Solution}]
	\label{thm:exist}
Assume $\vc{G}_k$ satisfies \eqref{FG1}, \eqref{FG2}, and $ {\bf u}_{\varepsilon}$ be a family of finite energy weak martingale solutions to the stochastic incompressible Navier--Stokes system \eqref{P1NS}. Let the corresponding initial data ${\bf u}_{0}$, the initial law $\Lambda$, given on the space $L_{\dv}^2(\T^3)$, be independent of $\varepsilon$, satisfying the following moment estimate
\begin{align}\label{initial}
\int_{L_{\dv}^2} \Vert  \mathbf{q}\Vert^p_{L_{\dv}^2(\T^3)}\, \mathrm{d}\Lambda (\mathbf{q})<\infty,
\end{align}
holds for all $1\leq p<\infty$. 
Then the family ${\lbrace {\bf u}_{\varepsilon}  \rbrace}_{\varepsilon>0}$
generates, as $\varepsilon \mapsto 0$, a Young measure ${\lbrace \mathcal{V}^{\omega}_{t,x} \rbrace}_{t\in [0,T]; x\in \T^3}$ which is a dissipative measure-valued martingale solution to
the stochastic incompressible Euler system (\ref{P1}), in the sense of Definition~\ref{def:dissMartin}, with initial data $\mathcal{V}^{\omega}_{0,x}=\delta_{ {\bf u}_0(x)}$ almost surely.
\end{Theorem}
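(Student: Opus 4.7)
The plan is to realize the dissipative measure-valued martingale solution as the vanishing viscosity limit of the Navier--Stokes martingale solutions $\mathbf{u}_\varepsilon$ provided by Theorem~\ref{thm:Romeo1}. The first step is to derive $\varepsilon$-independent moment bounds directly from the energy inequality \eqref{eq:apriorivarepsilon1}. Using the Burkholder--Davis--Gundy inequality on the stochastic term, assumption \eqref{FG1} on the noise, and a standard Gronwall argument, I expect to obtain, for every $1\le p<\infty$,
\begin{align*}
\mathbb{E}\Big[\sup_{t\in[0,T]}\|\mathbf{u}_\varepsilon(t)\|_{L^2(\T^3)}^{2p}\Big]
+\mathbb{E}\Big[\Big(\varepsilon\int_0^T\|\nabla\mathbf{u}_\varepsilon\|_{L^2}^2\,\mathrm{d}t\Big)^p\Big]\le C,
\end{align*}
with $C$ depending only on the moments of $\Lambda$ guaranteed by \eqref{initial}. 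Testing the equation in \eqref{e1q:energy1} against elements of $C^\infty_{\mathrm{div}}(\T^3)$ and using \eqref{FG1}, I would then bound fractional time increments of $\mathbf{u}_\varepsilon$ in $H^{-2\alpha}_{\mathrm{div}}(\T^3)$ and conclude, via Kolmogorov's criterion (Lemma~\ref{lemma01}), that $\mathbf{u}_\varepsilon$ is uniformly bounded in $W^{\gamma,q}(0,T;H^{-2\alpha}_{\mathrm{div}})$ for suitable $\gamma,q$ with $\gamma q>1$.

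The second step is tightness and Skorokhod. I would work on the product path space
\begin{align*}
\mathcal{X}:=\bigl(L^\infty(0,T;L^2_{\mathrm{div}}(\T^3)),w^*\bigr)\cap C([0,T];H^{-2\alpha}_{\mathrm{div}}(\T^3))\times
\bigl(L^\infty_{w^*}([0,T]\times\T^3;\mathcal{P}(\mathcal{S}))\bigr)\times\bigl(L^\infty_{w^*}(0,T;\mathcal{M}_b(\T^3))\bigr)^{2}\times C([0,T];\mathfrak{U}_0),
\end{align*}
carrying $\mathbf{u}_\varepsilon$, the Young measure $\mathcal{V}^\varepsilon$ generated by $\mathbf{u}_\varepsilon$ (supplied by Lemma~\ref{lem001}), the measures $\mathbf{u}_\varepsilon\otimes\mathbf{u}_\varepsilon\,\mathrm{d}x$ and $\sum_k|\mathcal{G}_k(\mathbf{u}_\varepsilon)|^2\mathrm{d}x$, and the Wiener process $W_\varepsilon$; the relevant martingales from \eqref{e1q:energy1} and \eqref{eq:apriorivarepsilon1} must be added as well. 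Tightness in the first factor follows from Lemma~\ref{comp} combined with the fractional regularity estimate, tightness in the measure-valued factors from the uniform total variation bound coming from the energy estimate, and tightness of $W_\varepsilon$ is classical. Applying Theorem~\ref{newth} (the modified Jakubowski--Skorokhod theorem, needed because $L^\infty_{w^*}$ spaces are only quasi-Polish) on the common base $([0,1],\mathcal{B},\mathcal{L}_\R)$ produces new variables $(\tilde{\mathbf{u}}_\varepsilon,\tilde{\mathcal{V}}^\varepsilon,\ldots,\tilde{W}_\varepsilon)$ with the same joint laws converging almost surely to a limit $(\tilde{\mathbf{u}},\tilde{\mathcal{V}}^\omega,\ldots,\tilde{W})$, together with the adapted filtration $\tilde{\mathbb{F}}_t$ generated by these limit objects.

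The third step is to identify the limit equation \eqref{second condition measure-valued solution}. The viscous term $\varepsilon\int\nabla\tilde{\mathbf{u}}_\varepsilon\cdot\nabla\bm\varphi$ is $O(\sqrt\varepsilon)$ and vanishes. For the convective term, Lemma~\ref{lem001} applied with $H(\mathbf{v})=\mathbf{v}\otimes\mathbf{v}$ and $H^1(\mathbf{v})=|\mathbf{v}|^2$ combined with Lemma~\ref{lemma001} provides
\begin{align*}
\tilde{\mathbf{u}}_\varepsilon\otimes\tilde{\mathbf{u}}_\varepsilon\,\mathrm{d}x\,\mathrm{d}t
\rightharpoonup \langle\tilde{\mathcal{V}}^\omega_{t,x};\mathbf{u}\otimes\mathbf{u}\rangle\,\mathrm{d}x\,\mathrm{d}t+\mathrm{d}\mu_C,
\qquad |\mu_C|\le C\mu_{|\mathbf{u}|^2},
\end{align*}
$\tilde{\mathbb{P}}$-almost surely, which identifies the concentration defect measure $\mu_C$. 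The delicate point is the stochastic integral: since the Navier--Stokes martingale solutions are non-unique, the filtration $\mathbb{F}_{\varepsilon,t}$ genuinely depends on $\varepsilon$, so one cannot directly assert that the $L^2$-limit of $\int_0^t\mathcal{P}_H\mathcal{G}(\tilde{\mathbf{u}}_\varepsilon)\mathrm{d}\tilde{W}_\varepsilon$ equals $\int_0^t\mathcal{P}_H\mathcal{G}(\tilde{\mathbf{u}})\mathrm{d}\tilde{W}$. This is the main obstacle. Following the strategy of \cite{MKS01}, I would define $\mathcal{M}^1_E(t)$ simply as the $\tilde{\mathbb{P}}$-a.s.\ limit of these $H^{-1}_{\mathrm{div}}$-valued continuous square integrable martingales and verify, via uniform moment bounds and a standard martingale argument, that the limit is itself an $(\tilde{\mathbb{F}}_t)$-martingale. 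Condition (i) is then verified by writing down the explicit cross variation $\langle\hspace{-1pt}\langle h_\varepsilon,M^1_{E,\varepsilon}\rangle\hspace{-1pt}\rangle$ at the approximate level, passing to the limit term-by-term using the a.s.\ convergence of $\tilde{\mathbf{u}}_\varepsilon$ and the strong convergence of the coefficients $\mathbf{G}_k$ guaranteed by \eqref{FG2}.

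Finally, I would pass to the limit in the energy inequality \eqref{eq:apriorivarepsilon1}. The weak-$*$ limit of $\tfrac12\int|\tilde{\mathbf{u}}_\varepsilon|^2\,\mathrm{d}x$ yields $\int\langle\tilde{\mathcal{V}}^\omega_{t,x};\tfrac12|\mathbf{u}|^2\rangle\,\mathrm{d}x+\mathcal{D}(t)$ with $\mathcal{D}\ge0$, while Lemma~\ref{lemma001} applied to the noise term with the dominating function $H^1(\mathbf{v})=D_0(1+|\mathbf{v}|^2)$ produces the concentration defect $\mu_D$ and simultaneously gives $|\mu_C|+|\mu_D|\le C\,\mathrm{d}\mathcal{D}\otimes\mathrm{d}x$ in the sense of \eqref{fourth condition measure-valued solutions}. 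The scalar martingale $\mathcal{M}^2_E$ is obtained as the a.s.\ limit of the noise martingale in \eqref{eq:apriorivarepsilon1}, arguing exactly as for $\mathcal{M}^1_E$. Combining Lebesgue point arguments with weak lower semicontinuity on both sides (as in the derivation of \eqref{eq:apriorivarepsilon1}) delivers \eqref{third condition measure-valued solution} for all $0\le s<t<T$, including $s=0$ where $\mathcal{V}^\omega_{0,x}=\delta_{\mathbf{u}_0(x)}$, thus completing the verification of Definition~\ref{def:dissMartin}.
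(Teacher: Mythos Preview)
Your proposal is correct and follows essentially the same route as the paper: uniform energy bounds via BDG and Gronwall, tightness in quasi-Polish path spaces (with the martingale terms $X_\varepsilon,Y_\varepsilon$ carried along explicitly), Jakubowski--Skorokhod representation, and passage to the limit using the Young-measure/concentration-defect decomposition together with a martingale-identification argument for $\mathcal{M}^1_E,\mathcal{M}^2_E$. Two small corrections on details: for the cross-variation step you will \emph{not} obtain strong convergence of $\mathbf{G}_k(\tilde{\mathbf{u}}_\varepsilon)$ from \eqref{FG2} (since $\tilde{\mathbf{u}}_\varepsilon$ converges only weakly in $L^2$), and the paper instead uses the weak $L^2$ convergence $\mathbf{G}_k(\tilde{\mathbf{u}}_\varepsilon)\rightharpoonup\langle\tilde{\mathcal V}^\omega_{t,x};\mathbf{G}_k\rangle$ supplied by Lemma~\ref{lem001}; and to arrive at the precise form of item~(g) in Definition~\ref{def:dissMartin} the paper splits $|\mathcal P_H\mathbf{G}_k(\mathbf u_\varepsilon)|^2=|\mathbf{G}_k(\mathbf u_\varepsilon)|^2-|\mathcal Q_H\mathbf{G}_k(\mathbf u_\varepsilon)|^2$ and tracks the additional variable $F_\varepsilon=\tfrac12\sum_k|\mathcal Q_H\mathbf{G}_k(\mathbf u_\varepsilon)|^2$ through the Skorokhod procedure, using weak lower semicontinuity of its limit to discard the resulting nonnegative defect $\tilde\mu_F$.
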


We then establish the following weak (measure-valued)-strong uniqueness principle:
\begin{Theorem}[\bf Weak-Strong Uniqueness For Nonlinear Noise] \label{Weak-Strong Uniqueness}
Let $\big[ \big(\Omega,\mathbb{F}, (\mathbb{F}_{t})_{t\geq0},\mathbb{P} \big); \mathcal{V}^{\omega}_{t,x}, W \big]$ be a dissipative measure-valued martingale solution to the system \eqref{P1}. On the same stochastic basis $\big(\Omega,\mathbb{F}, (\mathbb{F}_{t})_{t\geq0},\mathbb{P} \big)$, let us consider the unique maximal strong pathwise solution to the Euler system (\ref{P1}), driven by the same cylindrical Wiener process $W$, given by 
$(\bar{{\bf u}},(\tau_L)_{L\in\mn},\mathfrak{t})$ with the initial data $\bar{\bf u}(0)$ satisfies 
\begin{equation*} 
\mathcal{V}^{\omega}_{0,x}= \delta_{\bar{\bf u}(0,x)}, \,\p-\mbox{a.s.,}\, \mbox{for a.e. } x \in \T^3.
\end{equation*}
Then for $L \in \N$, a.e. $t\in[0,T]$, $\mathcal{D}(t \wedge \tau_L)=0$, $\p$-a.s., and  for a.e. $t\in[0,T]$, $\mathbb{P}$-a.s.
\begin{equation*}
\mathcal{V}^{\omega}_{t \wedge \tau_L,x}	= \delta_{\bar{\bf u}(t \wedge \tau_L,x)}, \,  \mbox{for a.e. }x\in\T^3.
\end{equation*}
\end{Theorem}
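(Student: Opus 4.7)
The plan is to control the \emph{relative energy functional}
\[
\mathcal{R}(t) := \int_{\tor}\left\langle \mathcal{V}^\omega_{t,x};\tfrac{1}{2}|\vu|^2\right\rangle\dx - \int_{\tor}\langle \mathcal{V}^\omega_{t,x};\vu\rangle\cdot\bar{\vu}(t)\dx + \int_{\tor}\tfrac{1}{2}|\bar{\vu}(t)|^2\dx + \mathcal{D}(t)
\]
evaluated at the stopped time $t\wedge\tau_L$, and to close a stochastic Gronwall estimate of the form $\E[\mathcal{R}(t\wedge\tau_L)]\leq C_L\int_0^t\E[\mathcal{R}(s\wedge\tau_L)]\,\D s$. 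By Jensen's inequality in the Young-measure variable, $\mathcal{R}(t)\geq\mathcal{D}(t)\geq 0$, and the hypothesis $\mathcal{V}^\omega_{0,x}=\delta_{\bar{\vu}(0,x)}$ forces $\mathcal{R}(0)=0$; conversely, $\E[\mathcal{R}(t\wedge\tau_L)]=0$ is equivalent to the two desired conclusions, $\mathcal{D}(t\wedge\tau_L)=0$ and $\mathcal{V}^\omega_{t\wedge\tau_L,x}=\delta_{\bar{\vu}(t\wedge\tau_L,x)}$.

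To derive the evolution of $\mathcal{R}$ I would combine three ingredients. (a)~Itô's formula applied to $\tfrac{1}{2}\|\bar{\vu}(t)\|_{L^2}^2$ via Definition~\ref{def:strsol}: the convective drift $\langle\mathcal{P}_H(\bar{\vu}\cdot\nabla_x\bar{\vu}),\bar{\vu}\rangle$ vanishes by the incompressibility identity \eqref{property}, leaving only the Itô correction $\tfrac{1}{2}\sum_k\|\mathcal{P}_H\mathbf{G}_k(\bar{\vu})\|_{L^2}^2$ plus a martingale. (b)~The weak formulation \eqref{second condition measure-valued solution} tested against the \emph{stochastic} function $\bar{\vu}$: since $\bar{\vu}$ is itself an Itô process, Itô's product rule generates a cross-variation between $\bar{\vu}$ and the intrinsic martingale $\mathcal{M}^1_E$, which is \emph{precisely} prescribed by item~(i) of Definition~\ref{def:dissMartin}, producing the additional drift $\sum_k\int_{\tor}\mathcal{P}_H\langle\mathcal{V}^\omega;\mathbf{G}_k(\vu)\rangle\cdot\mathcal{P}_H\mathbf{G}_k(\bar{\vu})\dx$. (c)~The dissipative energy inequality \eqref{third condition measure-valued solution} for the measure-valued solution.

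Summing the three contributions with the appropriate signs and taking expectation to annihilate the martingale terms, the convective contributions collapse, using $\divv\bar{\vu}=0$ and \eqref{property}, into
\[
-\E\int_0^{t\wedge\tau_L}\!\!\int_{\tor}\big\langle\mathcal{V}^\omega;(\vu-\bar{\vu})\otimes(\vu-\bar{\vu})\big\rangle:\nabla_x\bar{\vu}\dx\,\D s - \E\int_0^{t\wedge\tau_L}\!\!\int_{\tor}\nabla_x\bar{\vu}:\D\mu_C\,\D s,
\]
while the noise contributions, after exploiting Helmholtz orthogonality $|\mathbf{G}_k|^2=|\mathcal{P}_H\mathbf{G}_k|^2+|\mathcal{Q}_H\mathbf{G}_k|^2$ and completing a square, reduce to $\tfrac{1}{2}\sum_k\langle\mathcal{V}^\omega;|\mathcal{P}_H(\mathbf{G}_k(\vu)-\mathbf{G}_k(\bar{\vu}))|^2\rangle$ plus a Young-measure variance of $\mathcal{Q}_H\mathbf{G}_k(\vu)$; by \eqref{FG2} and Jensen, both are dominated by $D_1\langle\mathcal{V}^\omega;|\vu-\bar{\vu}|^2\rangle$ and therefore absorbed into $C D_1\E\int_0^t\mathcal{R}(s\wedge\tau_L)\,\D s$. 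Since \eqref{eq:blowup} gives $\|\nabla_x\bar{\vu}\|_{L^\infty}\leq L$ on $[0,\tau_L]$ and \eqref{fourth condition measure-valued solutions} yields $\int_0^\tau\int d|\mu_C|\leq C\int_0^\tau\mathcal{D}\,\D s$, the convective and concentration-defect terms are both absorbed into $C_L\E\int_0^t\mathcal{R}(s\wedge\tau_L)\,\D s$. Gronwall's lemma then delivers $\E[\mathcal{R}(t\wedge\tau_L)]=0$, whence $\mathcal{D}(t\wedge\tau_L)=0$ and $\mathcal{V}^\omega_{t\wedge\tau_L,x}=\delta_{\bar{\vu}(t\wedge\tau_L,x)}$ $\p$-a.s.\ for a.e.\ $x\in\tor$.

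The hardest step will be the rigorous application of Itô's product formula in ingredient~(b): because the approximating system \eqref{P1NS} admits only martingale solutions, the intrinsic martingale $\mathcal{M}^1_E$ carries no explicit stochastic-integral representation, and its quadratic covariation with the smooth Itô process $\bar{\vu}$ cannot be computed directly from any stochastic-integrand formula. This is precisely why the correct cross variation is postulated as a structural compatibility condition in item~(i) of Definition~\ref{def:dissMartin}, a postulate playing the role of the ``noise-noise'' interaction in the Kružkov doubling-of-variables method. A secondary technicality is verifying that $\bar{\vu}$ restricted to $[0,\tau_L]$ meets the regularity assumptions stipulated in item~(i) of Definition~\ref{def:dissMartin}, which follows from Theorem~\ref{existence of strong solution} together with the blow-up criterion \eqref{eq:blowup}.
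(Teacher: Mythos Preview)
Your proposal is correct and follows essentially the same route as the paper: the paper likewise derives a relative energy inequality by combining the dissipative energy inequality \eqref{third condition measure-valued solution}, the field equation \eqref{second condition measure-valued solution} tested against the strong solution $\bar{\vu}$ via a weak It\^o product formula (using item~(i) of Definition~\ref{def:dissMartin} to identify the cross variation with $\mathcal{M}^1_E$), and It\^o's formula for $\tfrac{1}{2}\|\bar{\vu}\|_{L^2}^2$, then estimates the remainder exactly as you describe and closes by Gronwall. The only cosmetic difference is that the paper's algebra for the noise terms yields $\tfrac{1}{2}\sum_k\langle\mathcal{V}^\omega;|\mathbf{G}_k(\vu)-\mathbf{G}_k(\bar{\vu})|^2\rangle$ minus a non-negative $\mathcal{Q}_H$-square (which is simply dropped), rather than your $\mathcal{P}_H$-version plus a variance, but either manipulation leads to the same Gronwall closure.
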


%
%

Next, we move our attention to the stochastic incompressible Navier--Stokes equations given by \eqref{P1NS} (with $\varepsilon=1$). For the deterministic counterpart of \eqref{P1NS}, Prodi \cite{prodi} and Serrin \cite{serrin} established weak-strong uniqueness principle under additional regularity of a solution $\mathbf{U}\in L^r([0,T],L^s(\mathbb{T}^3))$, where $r$ and $s$ satisfies the relation $2/r + 3/s =1$, with $s \in (3, \infty)$. In the stochastic setup, uniqueness of finite energy weak martingale solutions for \eqref{P1NS} seems to be out of reach. However, one can obtain a conditional uniqueness result like Prodi and Serrin. In fact, our  objective is to give a sufficient condition under which martingale solutions of Naiver Stokes equation \eqref{P1NS} is unique in class of finite energy weak martingale solutions. Note that, in comparison to deterministic analysis, we need additional continuity assumption on stochastic solutions to deal with the stopping time.

\begin{Theorem}[\textbf{Weak-Strong Uniqueness For Naiver Stokes}] 
\label{Weak-Strong Uniqueness1}
Let $\bf u, U$ be two finite energy weak martingale solutions to the system \eqref{P1NS}, defined on the same stochastic basis $\big(\Omega,\mathbb{F}, (\mathbb{F}_{t})_{t\geq0}, W,\mathbb{P} \big)$ with same intial data. Let the solution $\bf U$ additionally satisfies
\begin{equation} 
\label{cond}
\E\bigg[ \sup_{t \in [0,T]} \|{\bf U} \|_{L^{s}(\mathbb{T}^3)} \bigg] < \infty, \,\, \text{for some}\,\, 3\textless\,s\,\textless\,\infty.
\end{equation}
Then $\mathbb{P}$-almost surely, for all $t\in[0,T]$ 
\begin{equation*}
{\vu }({t,x})	= {{\bf U}(t,x)},\,  \mbox{for a.e. }x\in \T^3.
\end{equation*}
\end{Theorem}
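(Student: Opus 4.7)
The plan is to carry out a Prodi--Serrin-style weak-strong uniqueness argument directly at the pathwise level, exploiting that both $\mathbf{u}$ and $\mathbf{U}$ are built on the \emph{same} stochastic basis and are driven by the \emph{same} cylindrical Wiener process $W$. This matched-noise feature is what lets the two stochastic integrals combine into a genuine square via \eqref{FG2}.

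First I would derive a relative energy inequality for $R(t):=\|\mathbf{u}(t)-\mathbf{U}(t)\|_{L^2}^2$. Adding the two energy inequalities \eqref{eq:apriorivarepsilon1} for $\mathbf{u}$ and $\mathbf{U}$, and computing the cross term $\langle \mathbf{u}(t),\mathbf{U}(t)\rangle_{L^2}$ via the It\^o product rule applied to these two $H^{-2\alpha}_{\dv}$-valued semimartingales (using $\mathbf{U}$ as a test element in the weak formulation for $\mathbf{u}$, $\mathbf{u}$ in that for $\mathbf{U}$, and adding the cross variation $\sum_k\langle\mathcal{P}_H\mathbf{G}_k(\mathbf{u}),\mathcal{P}_H\mathbf{G}_k(\mathbf{U})\rangle_{L^2}\,dt$), the viscous pieces combine into $-2\|\nabla(\mathbf{u}-\mathbf{U})\|_{L^2}^2$; the noise It\^o corrections and the cross variation telescope into $\sum_k\|\mathcal{P}_H(\mathbf{G}_k(\mathbf{u})-\mathbf{G}_k(\mathbf{U}))\|_{L^2}^2\le D_1\|\mathbf{u}-\mathbf{U}\|_{L^2}^2$ via \eqref{FG2} and $L^2$-contractivity of $\mathcal{P}_H$; and the convective cubic terms reduce, by repeated use of the divergence-free identity $\int(\mathbf{v}\cdot\nabla)\mathbf{z}\cdot\mathbf{z}\,dx=0$, to the single piece $-2\int\mathbf{w}\otimes\mathbf{w}:\nabla\mathbf{U}\,dx$ with $\mathbf{w}:=\mathbf{u}-\mathbf{U}$. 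Since the initial data coincide, this yields, for every $t\in[0,T]$,
\begin{equation*}
R(t)+2\int_0^t\|\nabla\mathbf{w}\|_{L^2}^2\,ds\le 2\int_0^t\Bigl|\int_{\mathbb{T}^3}\mathbf{w}\otimes\mathbf{w}:\nabla\mathbf{U}\,dx\Bigr|\,ds+D_1\int_0^tR(s)\,ds+M(t),
\end{equation*}
where $M$ is a real-valued continuous local martingale with $M(0)=0$.

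The central nonlinear estimate is the Prodi--Serrin control of the cubic term. Integration by parts (using $\mathrm{div}\,\mathbf{w}=0$) gives $\int\mathbf{w}\otimes\mathbf{w}:\nabla\mathbf{U}\,dx=-\int(\mathbf{w}\cdot\nabla)\mathbf{w}\cdot\mathbf{U}\,dx$; H\"older with exponents $(s,\tfrac{2s}{s-2},2)$ combined with the 3D Gagliardo--Nirenberg inequality $\|\mathbf{w}\|_{L^{2s/(s-2)}}\le C\|\mathbf{w}\|_{L^2}^{1-3/s}\|\nabla\mathbf{w}\|_{L^2}^{3/s}$ (valid because $s>3$) yields
\begin{equation*}
\Bigl|\int_{\mathbb{T}^3}\mathbf{w}\otimes\mathbf{w}:\nabla\mathbf{U}\,dx\Bigr|\le C\|\mathbf{U}\|_{L^s}\|\mathbf{w}\|_{L^2}^{1-3/s}\|\nabla\mathbf{w}\|_{L^2}^{1+3/s}.
\end{equation*}
Young's inequality with conjugate pair $(\tfrac{2}{1+3/s},\tfrac{2}{1-3/s})$ absorbs a small multiple of $\|\nabla\mathbf{w}\|_{L^2}^2$ into the viscous dissipation on the left and leaves a term $C_\eta\|\mathbf{U}\|_{L^s}^{2s/(s-3)}R$, producing
\begin{equation*}
R(t)\le \int_0^t\bigl(C_\eta\|\mathbf{U}(s)\|_{L^s}^{2s/(s-3)}+D_1\bigr)R(s)\,ds+M(t).
\end{equation*}

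To close the argument I would introduce the stopping times $\tau_N:=\inf\bigl\{t\in[0,T]:\|\mathbf{U}(t)\|_{L^s}\ge N\bigr\}\wedge T$---well defined as $(\mathbb{F}_t)$-stopping times once a time-continuous modification of $\|\mathbf{U}(\cdot)\|_{L^s}$ is fixed, the additional continuity alluded to in the remark preceding the theorem---and localize $M$ further if necessary. On $[0,\tau_N]$ the random coefficient is bounded by $N^{2s/(s-3)}$, so taking expectations and applying deterministic Gr\"onwall gives $\mathbb{E}R(t\wedge\tau_N)=0$ for every $t\in[0,T]$ and every $N$. Assumption \eqref{cond} forces $\sup_{t\in[0,T]}\|\mathbf{U}(t)\|_{L^s}<\infty$ $\mathbb{P}$-a.s., hence $\tau_N\uparrow T$ $\mathbb{P}$-a.s.\ as $N\to\infty$, and Fatou's lemma delivers $R\equiv 0$ on $[0,T]$, i.e.\ $\mathbf{u}=\mathbf{U}$. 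The main obstacle will be rigorously justifying the cross-term It\^o identity: weak martingale solutions live only in $L^\infty_tL^2_x\cap L^2_tH^1_x\cap C_tH^{-2\alpha}_{\dv}$, so neither solution is regular enough to be inserted directly as test function into the other's equation. One must first regularize (for instance by spectral truncation or a space mollifier $J_\delta$), apply It\^o's product rule to the smoothed pairing $\langle J_\delta\mathbf{u},J_\delta\mathbf{U}\rangle$, and pass to the limit $\delta\to 0$ using the available uniform bounds together with the Lipschitz structure \eqref{FG2} to control the commutator terms; this is the technical heart of the argument in the stochastic setting.
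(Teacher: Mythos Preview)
Your proposal is correct and follows essentially the same route as the paper: both derive a relative energy inequality for $\|\mathbf{u}-\mathbf{U}\|_{L^2}^2$ by combining the two energy inequalities with an It\^o product formula for the cross term (justified via spatial mollification, exactly the regularization step you flag as the technical heart), reduce the convective piece to the single trilinear term in $\mathbf{w}\otimes\mathbf{w}$ and $\mathbf{U}$, control it by the Prodi--Serrin estimate plus Young's inequality to absorb the gradient, localize with the stopping time $\tau_N=\inf\{t:\|\mathbf{U}(t)\|_{L^s}\ge N\}$, and close with Gr\"onwall and $\tau_N\uparrow T$. The only cosmetic difference is that the paper phrases the nonlinear bound in time-integrated form via a lemma of Masuda, whereas you do the equivalent pointwise-in-time H\"older/Gagliardo--Nirenberg estimate.
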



\section{Proof of Theorem~\ref{thm:exist}}
\label{proof1}

The proof of existence is essentially based on compactness method. For the compactness argument in space and time variables we make use of Young measure theory, while for the compactness argument in probability variable we rely on Skorokhod representation theorem. However, as alluded to before, our path spaces are not Polish spaces (which is required for classical Skorokhod theorem), therefore we rely on Skorokhod-Jakubowski theorem (\cite[Corollary 2]{BrzezniakHausenblasRazafimandimby})which is taylor made to deal with so called quasi-Polish spaces. As usual, we obtain the convergence of the approximate sequence on another probability space and the existence of dissipative measure-valued martingale solution follows, thanks to Young measure theory. Observe that the existence of a pathwise solution (typically obtained by  Gy\"{o}ngy-Krylov's characterization of convergence in probability) seems not possible due to the lack of pathwise uniqueness for the underlying system.
\subsection{A priori Bounds}
Recall that, the existence of finite energy weak martingale solution of stochastic incompressible Navier--Stokes system (\ref{P1NS})
$$\big[ \big(\Omega,\mathbb{F}, (\mathbb{F}_{{\varepsilon},t})_{t\geq0},\mathbb{P} \big);\mathbf{u}_{\varepsilon}, W\big]$$
is well established, thanks to the Theorem~\ref{thm:Romeo1}. Observe that the filtration $(\mathbb{F}_{{\varepsilon},t})_{t\geq0}$ depends on $\varepsilon$, and lack of pathwise uniqueness for (\ref{P1NS}) does not allow us to choose the filtration independent of $\varepsilon$. Having said this, however, note that the Brownian motion and the probability space can be chosen indepedent of $\varepsilon$.

To obatin a-priori estimate for the approximate solution, we make use of the energy inequality \eqref{eq:apriorivarepsilon1} to obatin for all $t\in[0,T]$, and $1\,\le\,p\,\textless\,\infty$
$$
\mathbb{E} \Big[\|{\bf u}_\varepsilon(t)\|_{L^2(\mathbb{T}^3)}^2 \Big]^p\le\,\mathbb{E} \Big[\|{\bf u}_0\|_{L^2(\mathbb{T}^3)}^2 \Big]^p+C\int_0^t\Bigg(1+\mathbb{E}\Big[\|{\bf u}_\varepsilon(s)\|_{L^2(\mathbb{T}^3)}^2\Big]^p\Bigg)ds.
$$
Therefore, a simple application Gronwall lemma yield
$$\mathbb{E} \Big[\|\vu _\varepsilon\|_{L^2(\mathbb{T}^3)}^2 \Big]^p\le\,C\,\Bigg(1+\mathbb{E} \Big[\|\vu _0\|_{L^2(\mathbb{T}^3)}^2\Big]^p\Bigg)$$
Again by energy inequality \eqref{eq:apriorivarepsilon1}, we have for $1\,\le\,p\,\textless\,\infty$
\begin{align*}
	\mathbb{E}\bigg[&\sup_{t\in[0,T]}\|{\bf u}_{\varepsilon}\|_{L^2(\mathbb{T}^3)}^2+\varepsilon\int_0^T\|\nabla_x{\bf u}_{\varepsilon}\|_{L^2(\mathbb{T}^3)}^2\, \D t\bigg]^p\,\\
	&\qquad \qquad \le\,C\int_{L_{\text{div}}^2(\mathbb{T}^3)} \Vert  \mathbf{q}\Vert^{2p}_{L_{\text{div}}^2(\T^3)}\, \mathrm{d}\Lambda (\mathbf{q})+ C\,\mathbb{E}\bigg[\sup_{t\in[0,T]}{\int_{\mathbb{T}^3}}\int_0^t {\bf u}_\varepsilon.\mathcal{P}_H\mathcal{G}({\bf u}_\varepsilon) \,\D W \,\D x	\bigg]^p.
\end{align*}
To handle the right most term of the above inequality, we make use of the classical Burkholder-Davis-Gundi (BDG) inequality to obtain
\begin{align*}
	\mathbb{E} & \bigg[\sup_{t\in[0,T]}  {\int_{\mathbb{T}^3}}\int_0^t {\bf u}_\varepsilon.\mathcal{P}_H\mathcal{G}({\bf u}_\varepsilon) \, \D W \, \D x	\bigg]^p\,\le\,C\,\mathbb{E}\bigg[\int_0^T\sum_{k\,\ge\,1}\bigg(\int_{\mathbb{T}^3}\mathcal{P}_H\textbf {G}_k(\vu _\varepsilon)\vu _{\varepsilon}\, \D x\bigg)^2\bigg]^{p/2}\\
	&\le\,C\,\mathbb{E}\bigg[\int_0^T\|{\bf u}_\varepsilon(t)\|_{L^2(\mathbb{T}^3)}^2\,\sum_{k\,\ge\,1}\|{\textbf G}_k({\bf u}_{\varepsilon}(t))\|_{L^2(\mathbb{T}^3)}^2 \, \D t\bigg]^{p/2}\\
	&\le\, C\,\mathbb{E}\bigg[\int_0^T\|{\bf u}_\varepsilon(t)\|_{L^2(\mathbb{T}^3)}^2\,(1+\|{\bf u}_{\varepsilon}(t)\|_{L^2(\mathbb{T}^3)}^2) \, \D t\bigg]^{p/2}\\
	&\le\,C\mathbb{E}\bigg[\int_0^T\|{\bf u}_\varepsilon(t)\|_{L^2(\mathbb{T}^3)}^2\, \D t \bigg]^p+\mathbb{E}\bigg[\int_0^T(1+\|{\bf u}_\varepsilon(t)\|_{L^2(\mathbb{T}^3)}^2)\, \D t\bigg]^p
	\le\,C\Bigg(1+\int_{L_{\text{div}}^2(\mathbb{T}^3)} \hspace{-0.2cm} \Vert  \mathbf{q}\Vert^{2p}_{L_{\text{div}}^2(\T^3)}\, \mathrm{d}\Lambda (\mathbf{q})\Bigg).
\end{align*}
This implies that, for any $1 \le p<\infty$, we have
\begin{align*}
\stred\bigg[\sup_{0\leq t\leq T}\int_{\mt}{|{\bf u}_\varepsilon|^2}\,\dif x
+ \varepsilon \int_0^T\int_{\mt}|\nabla_x\bu_\varepsilon|^2\dif x\,\dif s\bigg]^p  \leq \,C\Bigg(1+\int_{L_{\text{div}}^2(\T^3)} \Vert  \mathbf{q}\Vert^{2p}_{L_{\text{div}}^2(\T^3)}\, \mathrm{d}\Lambda (\mathbf{q})\Bigg)\le\,C(p,\Lambda,T)
\end{align*}
Above relation leads to the following uniform bound
\begin{align}
 \bfu_\varepsilon&\in L^{p}(\Omega;L^\infty(0,T;L_{\text{div}}^{2}( \mathbb T^3))).\label{apv}
\end{align}

\subsection{Tightness and almost sure representations}
\label{subsec:compactness}

For our purpose, to secure almost sure convergence in the probability variable ($\omega$-variable) we make use of Skorokhod-Jakubowski version \cite{BrzezniakHausenblasRazafimandimby,Jakubowski} of the classical Skorokhod representation theorem. It is well-known that such a result can be obtained by establishing tightness of probability measures related to the random variables in quasi-Polish spaces. In what follows, our first aim is to establish the tightness of the probabilty measures (laws) generated by the approximate solutions. To do so, we first introduce the following path space $\mathcal{Y}$ for these measures:
\begin{align*}
\mathcal{Y}_\bu&= C_w([0,T];L_{\text{div}}^2(\mt)),&\mathcal{Y}_W&=C([0,T];\mathfrak{U}_0),\\
\mathcal{Y}_{C} &= \big(L^{\infty}(0,T; \mathcal{M}_b(\T^3)), w^* \big),&\mathcal{Y}_{E} &= \big(L^{\infty}(0,T; \mathcal{M}_b(\T^3)), w^* \big),\\
\mathcal{Y}_{D}&= \big(L^{\infty}(0,T; \mathcal{M}_b(\T^3)), w^* \big) &\mathcal{Y}_{\mathcal{V}} &= \big(L^{\infty}((0,T)\times \T^3; \mathcal{P}(\R^3)), w^* \big),\\
\mathcal{Y}_X&=C([0,T];H_{\text{div}}^{-1}(\mathbb{T}^3)), & \mathcal{Y}_Y&=C([0,T]; \R),\\
\mathcal{Y}_F&=\big(L^{\infty}(0,T; \mathcal{M}_b(\T^3)), w^* \big),
\end{align*}
Let us denote by $\mu_{\bu_\varepsilon}$, and $\mu_{W_\varepsilon}$ respectively, the law of $\bu_\varepsilon$, and $W_\varepsilon$ on the corresponding path space. Moreover, for the martingale terms, let $\mu_{X_\varepsilon}$, and $\mu_{Y_\varepsilon}$ denote the law $X_\varepsilon:=\int_0^t {\mathcal{P}_H\mathcal{G}}(\vu _\varepsilon )\,\Dif W$, and $Y_\varepsilon:=\int_0^t\intTor{ \vu_{\varepsilon} \cdot {\mathcal{P}_H\mathcal {G}_k}( \vu _\varepsilon ) } \,\Dif W$ on the corresponding path spaces respectively. Furthermore, let $\mu_{C_\varepsilon}$, $\mu_{D_\varepsilon}$, $\mu_{E_\varepsilon}$, and $\mu_{{\mathcal{V}}_\varepsilon}$ denote the law of 
\begin{align*}
& C_\varepsilon:= {\vu _\varepsilon\otimes \vu _\varepsilon}, \quad D_{\varepsilon}:= \sum_{k\geq1}| \vc{G}_k({\bf u}_{\varepsilon}) |^2,\quad E_\varepsilon :={\frac{1}{2}|{\bf u}_\varepsilon|^2}, \quad \quad {\mathcal{V}}_\varepsilon := \delta_{ \vu _\varepsilon}, \quad F_\varepsilon:=\frac{1}{2}\sum_{k\,\ge\,1}|\mathcal{Q}_h{\vc G_k(u_\varepsilon)}|^2,
\end{align*}
respectively, on the corresponding path spaces. Finally, we denote by $\mu^\varepsilon$, the joint law of all the variables on $\mathcal{Y}$. As stated before, our aim is now to establish tightness of $\{\mu^\varepsilon;\,\varepsilon\in(0,1)\}$. To this end, first note that tightness of $\mu_{W_\ep}$ is straightforward. Therefore, we focus on proving tightness of other variables.

\begin{Proposition}\label{prop:rhotight}
The set $\{\mu_{\bu_\varepsilon};\,\varepsilon\in(0,1)\}$ is tight on $\mathcal{Y}_\bu$. 
\end{Proposition}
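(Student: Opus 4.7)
The plan is to combine the uniform moment bound \eqref{apv} with a fractional-in-time H\"older regularity estimate in a weaker negative Sobolev space, and then invoke a standard Arzel\`a--Ascoli-type compactness criterion tailored to the weakly continuous path space $\mathcal{Y}_\bu = C_w([0,T]; L^2_{\text{div}}(\mathbb{T}^3))$.

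First I would fix an integer $K$ sufficiently large (e.g. $K \geq 3$) so that $H^K(\mathbb{T}^3)$ embeds continuously into $C(\mathbb{T}^3)$. This guarantees that both $\mathcal{P}_H \Div(\bu_\varepsilon \otimes \bu_\varepsilon)$ and $\mathcal{P}_H \Delta \bu_\varepsilon$ are well defined as elements of $H^{-K}_{\text{div}}(\mathbb{T}^3)$ whenever $\bu_\varepsilon \in L^2_{\text{div}}$, with deterministic estimates
$$\Vert \mathcal{P}_H \Div(\bu_\varepsilon \otimes \bu_\varepsilon)\Vert_{H^{-K}_{\text{div}}} \leq C \Vert \bu_\varepsilon\Vert_{L^2}^2, \qquad \varepsilon\, \Vert \mathcal{P}_H \Delta \bu_\varepsilon\Vert_{H^{-K}_{\text{div}}} \leq C\,\varepsilon\,\Vert \bu_\varepsilon\Vert_{L^2}.$$
Together with \eqref{apv}, these yield bounds of order $|t-s|$ for the corresponding Bochner integrals in $L^p(\Omega; H^{-K}_{\text{div}})$, uniformly in $\varepsilon \in (0,1)$. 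For the stochastic increment, the Burkholder--Davis--Gundy inequality combined with the linear growth \eqref{FG1} gives
$$\mathbb{E}\Bigl\Vert \int_s^t \mathcal{P}_H \mathcal{G}(\bu_\varepsilon)\, dW \Bigr\Vert_{H^{-K}_{\text{div}}}^p \leq C\, \mathbb{E}\Bigl(\int_s^t \bigl(1 + \Vert \bu_\varepsilon\Vert_{L^2}^2\bigr)\,d\tau\Bigr)^{p/2} \leq C\,|t-s|^{p/2}.$$
Choosing $p$ sufficiently large, the Kolmogorov continuity criterion (Lemma \ref{lemma01}) then furnishes the uniform bound $\mathbb{E}\,\Vert \bu_\varepsilon\Vert_{C^\beta([0,T]; H^{-K}_{\text{div}})}^p \leq C$ for some H\"older exponent $\beta \in (0, 1/2)$.

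Combining this time-regularity with the $L^\infty$-in-time $L^2$ moment bound from \eqref{apv}, tightness is obtained through a compactness criterion for weakly continuous trajectories. Namely, for every $R > 0$ the set
$$\mathcal{K}_R := \Bigl\{v \in L^\infty(0,T; L^2_{\text{div}}) : \Vert v\Vert_{L^\infty(0,T; L^2_{\text{div}})} + \Vert v\Vert_{C^\beta([0,T]; H^{-K}_{\text{div}})} \leq R\Bigr\}$$
is relatively compact in $C_w([0,T]; L^2_{\text{div}}(\mathbb{T}^3))$: bounded balls of $L^2_{\text{div}}$ are weakly compact and metrizable by separability, while the equicontinuity of the scalar maps $t \mapsto \langle v(t), \bm\varphi\rangle$ for $\bm\varphi$ in a countable dense subset of $L^2_{\text{div}}$ follows from H\"older continuity in $H^{-K}_{\text{div}}$ together with the uniform $L^2$ bound and an approximation argument. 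Chebyshev's inequality then yields $\mathbb{P}[\bu_\varepsilon \notin \mathcal{K}_R] \leq C/R$ uniformly in $\varepsilon$, proving the desired tightness.

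The main delicate point in this strategy is the bookkeeping inside the stochastic integral estimate: one must align the exponent $p$ in \eqref{apv} with that in the BDG inequality so that Lemma \ref{lemma01} delivers a strictly positive H\"older exponent $\beta$, and one should verify that the factor $\varepsilon$ appearing in the viscous contribution is harmless rather than helpful in order to obtain uniform estimates across the whole range $\varepsilon \in (0,1)$. The remaining deterministic contributions are automatically Lipschitz in time under \eqref{apv}, and the final Arzel\`a--Ascoli step is standard once these moment bounds are in hand.
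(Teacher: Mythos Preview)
Your proposal is correct and follows essentially the same route as the paper: split the equation into a deterministic drift (convection plus viscosity) and a stochastic part, estimate the drift in a negative Sobolev space, apply BDG plus Kolmogorov continuity (Lemma~\ref{lemma01}) to the stochastic integral, and then invoke the compact embedding
\[
C^\alpha\bigl([0,T];H^{-K}_{\text{div}}\bigr)\cap L^\infty\bigl(0,T;L^2_{\text{div}}\bigr)\hookrightarrow\hookrightarrow C_w\bigl([0,T];L^2_{\text{div}}\bigr)
\]
together with Chebyshev to conclude tightness. The only cosmetic difference is that the paper bounds the viscous contribution via the dissipation term in the energy estimate (placing $I_\varepsilon = -\varepsilon\nabla\vu_\varepsilon + \vu_\varepsilon\otimes\vu_\varepsilon$ in $L^2(0,T;L^1)\subset L^2(0,T;W^{-2,2})$), whereas you bound $\varepsilon\Delta\vu_\varepsilon$ directly in $H^{-K}_{\text{div}}$ using only the $L^2$ bound on $\vu_\varepsilon$; both are equally valid and uniform in $\varepsilon\in(0,1)$.
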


\begin{proof}
For convenience, we rewrite the equation \eqref{e1q:energy1} as
\begin{align*}
	\int_{\mathbb{T}^3}\vu _{\varepsilon}(t)\cdot \bm{\varphi}\, \D x=\int_{\mathbb{T}^3}\vu_{\varepsilon}(0)\cdot\bm{\varphi}\, \D x +\int_0^t\int_{\mathbb{T}^3}I_{\varepsilon}(s):\nabla\bm{\varphi} \, \D x\, \D s+\int_0^t\int_{\mathbb{T}^3}\mathcal{P}_H\mathcal G (\vu_{\varepsilon})\cdot\bm{\varphi} \, \D x\, \D W
\end{align*}
for all $t\in[0,T]$, for all $\bm{\varphi}\in C_{\text{div}}^{\infty}(\mathbb{T}^3)$. 
where $$I_{\varepsilon}:=-\varepsilon\nabla\vu_{\varepsilon}+\vu_{\varepsilon}\otimes\vu_{\varepsilon}$$
From the a priori estimate in \eqref{apv}, we obtain
$$I_{\varepsilon}\in L^1(\Omega;L^2(0,T;L^1(\mathbb{T}^3)))\subset L^1(\Omega;L^2(0,T;W^{-2,2}(\mathbb{T}^3)))$$
uniformly in $\varepsilon$. Let us consider the functional
$$\langle\mathcal{I}_{\varepsilon}({t),\bm{\varphi}}\rangle:=\int_0^t\int_{\mathbb{T}^3} I_{\varepsilon}(s):\nabla\bm{\varphi} \,\D x\, \D s,$$
which is related to the deterministic part of equation. Then we deduce from above the following estimate
$$\mathbb{E}\bigg[\|\mathcal{I}_{\varepsilon}\|_{W^{1,2}(0,T;W_{\text{div}}^{-3,2}(\mathbb{T}^3))}\bigg]\le\,C(T).$$
For the stochastic term, we have, for $a\,\textgreater\,2$
\begin{align*}
\mathbb{E} &\Bigg[\bigg\|\int_{t}^s \mathcal{P}_H \mathcal G(\vu _{\varepsilon}) dW\bigg\|_{L^2(\mathbb{T}^3)}^a \Bigg]\\ & \le\,C\,\mathbb{E}\Bigg[\bigg(\int_t^s\|\mathcal{P}_H\mathcal{G}(\vu _{\varepsilon})\|_{L_2(\mathfrak{U},L^2(\mathbb{T}^3))}^2d\sigma\bigg)^{a/2} \Bigg]
\le\,C\,\mathbb{E} \Bigg[\bigg(\int_{t}^s(1+\|\vu _{\varepsilon}(\sigma)\|_{L^2(\mathbb{T}^3)}^2)d\sigma\bigg)^{a/2} \Bigg]\\
&\le\,C\,\bigg(|t-s|^{a/2}\bigg(1+\mathbb{E}\Big[\sup_{t\in[0,T]}\|\vu _{\varepsilon}(t)\|_{L^2(\mathbb{T}^3)}\Big]^{a/2}\bigg)\bigg)
\le\,C\,|t-s|^{a/2}\Big(1+\mathbb{E}\Big[\|\vu _{\varepsilon}(0)\|_{L^2(\mathbb{T}^3)}^a\Big]\Big).
\end{align*}
As consequence of Kolmogorov continuity theorem (cf. Lemma~\ref{lemma01}), we have
$$\mathbb{E}\Bigg[\bigg\|\int_0^{\cdot}\mathcal{P}_H\mathcal G(\vu _{\varepsilon})dW\bigg\|_{C^{\alpha}([0,T];L_{\text{div}}^2(\mathbb{T}^3))}^a\Bigg]\le\,C$$
for all $\alpha\in(\frac{1}{a},\frac{1}{2})$, and $a\,\textgreater\,2$.
Combining the previous estimates and using the embeddings $W^{1,2}(0,T)\subset C^{1/2}[0,T]$, and $L_{\text{div}}^2(\mathbb{T}^3)\subset W_{\text{div}}^{-3,2}(\mathbb{T}^3)$, we conclude
$$\mathbb{E}\bigg[\|\vu _{\varepsilon}\|_{C^\alpha([0,T];W_{\text{div}}^{-3,2}(\mathbb{T}^3))}\bigg]\le\,C(T),$$
for some $\alpha\,\textless\,\frac{1}{2}$.
Next, we recall the following compact embedding \cite[Chapter 1]{m}
$$C^\alpha([0,T];W_{\text{div}}^{-3,2})\cap L^\infty(0,T;L_{\text{div}}^2(\mathbb{T}^3))\subset\subset C_w([0,T];L_{\text{div}}^2(\mathbb{T}^3)),$$
to conclude that $\mu_{\vu _{\varepsilon}}$ is tight.
\end{proof}

\begin{Proposition}\label{rhoutight14}
The set $\{\mu_{C_{\varepsilon}}, \mu_{D_{\varepsilon}}, \mu_{E_{\varepsilon}},\mu_{F_\varepsilon};\,\varepsilon\in(0,1),\, k\ge 1\}$ is tight on $ \mathcal{Y}_{C} \times \mathcal{Y}_{D}  \times \mathcal{Y}_{E}\times\mathcal{Y}_{F}$.
\end{Proposition}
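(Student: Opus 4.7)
The plan is to exploit the duality structure of the common target space $(L^{\infty}(0,T;\mathcal{M}_b(\T^3)),w^*)$ together with the uniform bound \eqref{apv} and the noise growth assumption \eqref{FG1}. Since tightness on a finite product of Polish-type spaces follows from tightness of each marginal (a product of weak-$*$ compact sets is weak-$*$ compact), it suffices to handle $\mu_{C_\ep}$, $\mu_{D_\ep}$, $\mu_{E_\ep}$, $\mu_{F_\ep}$ separately and combine at the end.

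First I would observe that $L^1(0,T;C(\T^3))$ is a separable Banach space whose dual is $L^{\infty}(0,T;\mathcal{M}_b(\T^3))$. Consequently, by the Banach--Alaoglu theorem, every ball
\[
B_R := \Big\{ \nu\in L^{\infty}(0,T;\mathcal{M}_b(\T^3)) \ \Big|\ \|\nu\|_{L^{\infty}(0,T;\mathcal{M}_b(\T^3))}\le R\Big\}
\]
is weak-$*$ compact and metrizable, hence a compact subset of $\mathcal{Y}_C = \mathcal{Y}_D = \mathcal{Y}_E = \mathcal{Y}_F$. Tightness of the laws therefore reduces to a uniform-in-$\ep$ probability bound of the form $\prst(\|\cdot\|_{L^{\infty}(0,T;\mathcal{M}_b(\T^3))}>R)\to 0$ as $R\to\infty$.

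Next I would bound each of the four quantities pointwise in $t$ by the same controlling quantity $\|\vu_\ep\|^2_{L^{\infty}(0,T;L^2(\T^3))}$. Since the quantities $|\vu_\ep\otimes\vu_\ep|$, $\tfrac12|\vu_\ep|^2$, $\sum_k|\vc{G}_k(\vu_\ep)|^2$, $\tfrac12\sum_k|\mathcal{Q}_H\vc{G}_k(\vu_\ep)|^2$ are all non-negative (or have pointwise norm controlled by a non-negative integrable quantity), their total variation as measures on $\T^3$ coincides with their integral. Using $\|\vu_\ep\otimes\vu_\ep\|_{L^1(\T^3)}\le \|\vu_\ep\|^2_{L^2(\T^3)}$, the bound \eqref{FG1}, and the $L^2$-boundedness of $\mathcal{Q}_H$, one obtains, uniformly in $\ep$,
\[
\|C_\ep\|_{L^{\infty}(0,T;\mathcal{M}_b)}+\|E_\ep\|_{L^{\infty}(0,T;\mathcal{M}_b)}+\|D_\ep\|_{L^{\infty}(0,T;\mathcal{M}_b)}+\|F_\ep\|_{L^{\infty}(0,T;\mathcal{M}_b)}
\ \le\ C\Big(1+\|\vu_\ep\|^2_{L^{\infty}(0,T;L^2(\T^3))}\Big).
\]
Combining this deterministic inequality with the moment bound \eqref{apv} and Markov's inequality yields, for each $\delta>0$, an $R=R(\delta)$ such that
\[
\prst\Big(\|C_\ep\|_{L^{\infty}(0,T;\mathcal{M}_b)}>R\Big)\le\delta,
\]
and analogously for $D_\ep, E_\ep, F_\ep$. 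This furnishes a weak-$*$ compact set in each factor of probability $\ge 1-\delta$; taking products gives a compact set in $\mathcal{Y}_C\times\mathcal{Y}_D\times\mathcal{Y}_E\times\mathcal{Y}_F$ of joint probability $\ge 1-4\delta$, which is the required tightness.

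I do not expect any real obstacle here; the only subtlety is justifying that the weak-$*$ topology on bounded subsets of $L^{\infty}(0,T;\mathcal{M}_b(\T^3))$ is metrizable, so that weak-$*$ compactness actually yields the compactness needed for Prokhorov-type tightness. This is handled by separability of $L^1(0,T;C(\T^3))$, which in turn relies on separability of $C(\T^3)$ and of $L^1(0,T)$, both standard.
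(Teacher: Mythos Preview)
Your proposal is correct and follows the same approach as the paper's proof: the paper simply invokes the a~priori bound \eqref{apv} and the fact that bounded sets in $L^{\infty}_{w^*}(0,T;\mathcal{M}_b(\T^3))$ are weak-$*$ relatively compact, while you spell out these ingredients (Banach--Alaoglu via the duality with $L^1(0,T;C(\T^3))$, the pointwise estimates for $C_\ep,D_\ep,E_\ep,F_\ep$ in terms of $\|\vu_\ep\|_{L^\infty_t L^2_x}^2$, and Markov's inequality) in detail. Your additional remark on metrizability of bounded sets in the weak-$*$ topology is a useful clarification the paper leaves implicit.
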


\begin{proof}
By making use of the \emph{a priori} bound \eqref{apv}, and the fact that all bounded sets in $L_{w*}^{\infty}(0,T; \mathcal{M}_b(\T^3))$ are relatively compact with respect to the weak-$*$ topology, we obtain the desired result.
\end{proof}

\begin{Proposition}\label{rhoutight1}
The set $\{\mu_{{\mathcal{V}}_{\varepsilon}};\,\varepsilon\in(0,1)\}$ is tight on $\mathcal{Y}_{\mathcal{V}}$.
\end{Proposition}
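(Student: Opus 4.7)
Set $D:=(0,T)\times\T^{3}$. The plan is to embed $\mathcal{Y}_{\mathcal{V}}$ into a weak-$*$ compact metrizable ball coming from a separable predual, and then carve out weak-$*$ compact subsets of $\mathcal{Y}_{\mathcal{V}}$ using a uniform second-moment bound that is available for free from the energy estimate \eqref{apv}. Concretely, identify $L^{\infty}_{w^{*}}(D;\mathcal{M}(\R^{3}))$ with the topological dual of the separable Banach space $L^{1}(D;C_{0}(\R^{3}))$. Banach-Alaoglu together with separability of the predual make the closed unit ball $B$ weak-$*$ compact and metrizable, and the inclusion $\mathcal{Y}_{\mathcal{V}}\subset B$ is clear since each fiber $\mathcal{V}_{(t,x)}$ is a probability measure. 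For $R>0$ my candidate compact set is
\begin{equation*}
K_{R}:=\Big\{\mathcal{V}\in\mathcal{Y}_{\mathcal{V}}:\ \int_{0}^{T}\!\!\int_{\T^{3}}\langle\mathcal{V}_{(t,x)},|y|^{2}\rangle\,\D x\,\D t\,\le\,R\Big\}.
\end{equation*}

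To show $K_{R}$ is weak-$*$ sequentially compact I would take $\mathcal{V}^{n}\in K_{R}$ and, by compactness of $B$, extract a weak-$*$ limit $\mathcal{V}\in B$. A Chebyshev-type tail bound $\langle\mathcal{V}^{n},\mathbf{1}_{\{|y|>M\}}\rangle\le M^{-2}\langle\mathcal{V}^{n},|y|^{2}\rangle$, combined with testing against $\psi(t,x)g_{M}(y)$ for $\psi\in L^{1}\cap L^{\infty}(D)$ and $g_{M}\in C_{c}(\R^{3})$ with $0\le g_{M}\le 1$ and $g_{M}\equiv 1$ on $\{|y|\le M\}$, gives
\begin{equation*}
\Big|\int_{D}\psi\,\D x\,\D t-\int_{D}\psi\,\langle\mathcal{V},g_{M}\rangle\,\D x\,\D t\Big|\,\le\,\frac{R\,\|\psi\|_{L^{\infty}}}{M^{2}}.
\end{equation*}
Letting $M\to\infty$ and using monotone convergence forces $\langle\mathcal{V}_{(t,x)},1\rangle=1$ for a.e.\ $(t,x)$, so $\mathcal{V}\in\mathcal{Y}_{\mathcal{V}}$. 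A parallel Fatou argument, approximating $|y|^{2}$ from below by $g_{k}\in C_{c}(\R^{3})^{+}$, transfers the moment bound $R$ to the limit, so $\mathcal{V}\in K_{R}$.

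Tightness is then immediate: since $\mathcal{V}_{\varepsilon}=\delta_{\mathbf{u}_{\varepsilon}}$ yields $\langle\mathcal{V}_{\varepsilon,(t,x)},|y|^{2}\rangle=|\mathbf{u}_{\varepsilon}(t,x)|^{2}$, Markov's inequality combined with \eqref{apv} gives
\begin{equation*}
\mu_{\mathcal{V}_{\varepsilon}}(\mathcal{Y}_{\mathcal{V}}\setminus K_{R})\,=\,\mathbb{P}\bigg(\int_{0}^{T}\!\!\int_{\T^{3}}|\mathbf{u}_{\varepsilon}|^{2}\,\D x\,\D t>R\bigg)\,\le\,\frac{1}{R}\,\mathbb{E}\bigg[\int_{0}^{T}\!\!\int_{\T^{3}}|\mathbf{u}_{\varepsilon}|^{2}\,\D x\,\D t\bigg]\,\le\,\frac{C(T,\Lambda)}{R},
\end{equation*}
uniformly in $\varepsilon$, and the right-hand side vanishes as $R\to\infty$. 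The only genuinely delicate step is the sequential closedness of $K_{R}$ inside $\mathcal{Y}_{\mathcal{V}}$: one has to rule out escape of fiber-mass to infinity in the velocity variable in the weak-$*$ limit, which would otherwise leave behind a strict sub-probability measure. The uniform second moment, inherited from the energy bound on $\mathbf{u}_{\varepsilon}$, is precisely the tool that suppresses this concentration-at-infinity effect.
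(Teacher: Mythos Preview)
Your proof is correct and follows essentially the same route as the paper: both define the moment-bounded set $K_{R}$ (the paper calls it $B_{M}$), assert or prove its relative compactness in the weak-$*$ topology, and then apply Markov's inequality together with the energy bound \eqref{apv} to estimate $\mu_{\mathcal{V}_{\varepsilon}}(K_{R}^{c})$. The only difference is that you actually justify why $K_{R}$ is sequentially compact in $\mathcal{Y}_{\mathcal{V}}$ (ruling out loss of fiber mass at infinity via the uniform second moment), whereas the paper simply asserts relative compactness without further comment.
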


\begin{proof}
This follows from the compactness criterion in $\big(L_{w*}^{\infty}((0,T)\times \T^3; \mathcal{P}(\R^3)), w^* \big)$. To see that, define the set
\begin{align*}
B_M:= \Big\lbrace {\mathcal{V}} \in \big(L^{\infty}((0,T)\times \T^3; \mathcal{P}(\R^3)), w^* \big); 
\int_0^T \int_{\T^3} \int_{\R^3}|\xi_1|^{2} \,d{\mathcal{V}}_{t,x}(\xi)\,\D x\,\D t \le M    \Big\rbrace,
\end{align*}
which is relatively compact in $\big(L^{\infty}((0,T)\times \T^3; \mathcal{P}(\R^3)), w^* \big)$. Notice that
\begin{align*}
\mathcal{L}[{\mathcal{V}}_{\varepsilon}](B^c_M)&=
\p\Bigg(\int_0^T \int_{\T^3} \int_{\R^3} \Big(|\xi_1|^{2}  \,d{\mathcal{V}}_{t,x}(\xi)\,\D x\,\D t > M  \Bigg) \\
&= \p\Bigg(\int_0^T \int_{\T^3} |\vu _{\varepsilon}|^2\,\D x\,\D t >M \Bigg)
 \le \frac1M \E\Big[\| \vu _{\varepsilon}\|_{L^2(\T^3)}^2\Big] \le \frac CM.
\end{align*}
The finishes the proof.
\end{proof}

\begin{Proposition}\label{rhoutight12}
The set $\{\mu_{X_{\varepsilon}};\,\varepsilon\in(0,1)\}$ is tight on $\mathcal{Y}_{X}$.
\end{Proposition}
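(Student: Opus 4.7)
The plan is to use the Burkholder--Davis--Gundy (BDG) inequality together with the growth condition \eqref{FG1} on $\mathcal{G}$ to control moments of time increments of $X_\varepsilon$, then apply the Kolmogorov continuity criterion (Lemma~\ref{lemma01}) to obtain uniform H\"older regularity in a sufficiently strong spatial norm, and finally invoke a compact embedding to deduce tightness in $\mathcal{Y}_X = C([0,T];H^{-1}_{\dv}(\mathbb{T}^3))$.

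More concretely, for any $a > 2$ and $0 \le s < t \le T$, the BDG inequality yields
\begin{align*}
\mathbb{E}\bigg[\Big\|\int_s^t \mathcal{P}_H \mathcal{G}(\vu_\varepsilon)\,\mathrm{d}W\Big\|_{L^2_{\dv}(\mathbb{T}^3)}^a\bigg]
&\le C\,\mathbb{E}\bigg[\Big(\int_s^t \|\mathcal{P}_H\mathcal{G}(\vu_\varepsilon)\|_{L_2(\mathfrak{U}; L^2_{\dv})}^2\,\mathrm{d}\sigma\Big)^{a/2}\bigg] \\
&\le C\,\mathbb{E}\bigg[\Big(\int_s^t \big(1+\|\vu_\varepsilon(\sigma)\|_{L^2(\mathbb{T}^3)}^2\big)\,\mathrm{d}\sigma\Big)^{a/2}\bigg] \\
&\le C\,|t-s|^{a/2}\Big(1+\mathbb{E}\big[\sup_{\sigma \in [0,T]}\|\vu_\varepsilon(\sigma)\|_{L^2(\mathbb{T}^3)}^a\big]\Big),
\end{align*}
where the second inequality uses \eqref{FG1} together with the continuity of $\mathcal{P}_H$ on $L^2$. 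The uniform bound \eqref{apv} on $\vu_\varepsilon$ in $L^p(\Omega; L^\infty(0,T; L^2_{\dv}))$ then gives
\begin{equation*}
\mathbb{E}\bigg[\Big\|\int_s^t \mathcal{P}_H\mathcal{G}(\vu_\varepsilon)\,\mathrm{d}W\Big\|_{L^2_{\dv}(\mathbb{T}^3)}^a\bigg] \le C(a,T)\,|t-s|^{a/2},
\end{equation*}
uniformly in $\varepsilon$.

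By Lemma~\ref{lemma01}, this moment estimate yields, for every $\alpha \in (1/a, 1/2)$,
\begin{equation*}
\mathbb{E}\Big[\|X_\varepsilon\|_{C^\alpha([0,T]; L^2_{\dv}(\mathbb{T}^3))}^a\Big] \le C(a,\alpha,T),
\end{equation*}
uniformly in $\varepsilon$. Choosing $a$ large enough so that $\alpha > 0$ is available, we now fix such an $\alpha$.

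To conclude tightness on $\mathcal{Y}_X$, note that the embedding $L^2_{\dv}(\mathbb{T}^3) \hookrightarrow H^{-1}_{\dv}(\mathbb{T}^3)$ is compact (Rellich--Kondrachov on the torus), and consequently the embedding
\begin{equation*}
C^\alpha([0,T]; L^2_{\dv}(\mathbb{T}^3)) \subset\subset C([0,T]; H^{-1}_{\dv}(\mathbb{T}^3))
\end{equation*}
is compact by Arzel\`a--Ascoli. Setting
\begin{equation*}
B_R := \big\{X \in C^\alpha([0,T]; L^2_{\dv}(\mathbb{T}^3))\,:\, \|X\|_{C^\alpha([0,T]; L^2_{\dv})} \le R\big\},
\end{equation*}
Chebyshev's inequality gives $\mu_{X_\varepsilon}(B_R^c) \le C R^{-a}$, uniformly in $\varepsilon$. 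Since $B_R$ is relatively compact in $\mathcal{Y}_X$, this proves tightness. The main mild technicality is simply verifying that the BDG bound combines cleanly with the Nemytskii growth \eqref{FG1}; everything else is an instance of the standard Kolmogorov--Rellich argument.
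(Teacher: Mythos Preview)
Your proof is correct. The approach differs slightly from the paper's: the paper invokes the fractional Sobolev regularity of stochastic integrals, namely $X_\varepsilon \in L^p(\Omega; W^{\alpha,q}(0,T; L^2_{\dv}(\T^3)))$ uniformly in $\varepsilon$ (quoting Flandoli--Gatarek), and then applies the compact embedding of Lemma~\ref{comp} with $H_1 = L^2_{\dv}$, $H_2 = H^{-1}_{\dv}$. You instead run BDG directly to get the increment bound, apply Kolmogorov (Lemma~\ref{lemma01}) to obtain uniform $C^\alpha([0,T];L^2_{\dv})$ bounds, and conclude via Arzel\`a--Ascoli/Rellich. The two routes are essentially equivalent---indeed, the BDG increment estimate you wrote is precisely what underlies the $W^{\alpha,q}$ bound the paper cites---and your argument is exactly the one the paper uses for the scalar martingale $Y_\varepsilon$ in the subsequent Proposition~\ref{rhoutight13}. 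The only mild caveat is that Lemma~\ref{lemma01} is stated for real-valued processes and does not explicitly assert the moment bound $\E\big[\|X_\varepsilon\|_{C^\alpha}^a\big]\le C$, but the paper itself uses this standard strengthening elsewhere (e.g.\ in Proposition~\ref{prop:rhotight}), so this is consistent with the paper's conventions.
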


\begin{proof}
To prove the result, it is enough to observe that the random variable $X_{\varepsilon}= \int_0^t\,\mathcal{P}_H\mathcal{G}(\bu_\varepsilon) \,\dif W(s) \in L^p\big(\Omega; W^{\alpha, q}(0,T; L_{\dv}^{2}(\T^3))\big)$, for $q\,\ge\,2$ (see \cite{FlandoliGatarek}). Therefore, a simple application of the compact embedding result given in Lemma \ref{comp} yields required tightness.
\end{proof}

\begin{Proposition}\label{rhoutight13}
The set $\{\mu_{Y_{\varepsilon}};\,\varepsilon\in(0,1)\}$ is tight on $\mathcal{Y}_{Y}$.
\end{Proposition}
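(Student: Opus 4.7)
The plan is to show that $Y_\varepsilon$ admits a uniform H\"older estimate in some $C^\alpha([0,T])$ with $\alpha<1/2$, and then conclude by the compact embedding $C^\alpha([0,T]) \subset\subset C([0,T];\mathbb{R})$. This is the exact same strategy already used in Proposition \ref{prop:rhotight} for the stochastic convolution, except now the integrand is a scalar random process rather than a Hilbert-space valued one.

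First, I would write out the increment $Y_\varepsilon(t)-Y_\varepsilon(s) = \sum_{k\ge1}\int_s^t\intTor{\bu_\varepsilon\cdot\mathcal{P}_H\mathbf{G}_k(\bu_\varepsilon)}\,\D W_k$ and apply the Burkholder--Davis--Gundy inequality: for any $a>2$,
\begin{align*}
\E\big[|Y_\varepsilon(t)-Y_\varepsilon(s)|^a\big]
&\le C\,\E\Bigg[\bigg(\int_s^t \sum_{k\ge 1}\Big(\intTor{\bu_\varepsilon\cdot\mathcal{P}_H\mathbf{G}_k(\bu_\varepsilon)}\Big)^{\!2}\,\D\tau\bigg)^{\!a/2}\Bigg].
\end{align*}
Applying Cauchy--Schwarz in $x$, boundedness of $\mathcal{P}_H$ on $L^2$, and the noise growth assumption \eqref{FG1}, each summand is controlled by $\|\bu_\varepsilon\|_{L^2}^2\,\|\mathbf{G}_k(\bu_\varepsilon)\|_{L^2}^2$, and summation over $k$ gives the bound $D_0\|\bu_\varepsilon\|_{L^2}^2(1+\|\bu_\varepsilon\|_{L^2}^2)$. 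Hence
\begin{align*}
\E\big[|Y_\varepsilon(t)-Y_\varepsilon(s)|^a\big]
&\le C\,|t-s|^{a/2}\,\E\Big[\sup_{\tau\in[0,T]}\big(1+\|\bu_\varepsilon(\tau)\|_{L^2}^2\big)^{a}\Big]
\le C(T,a,\Lambda)\,|t-s|^{a/2},
\end{align*}
where the last inequality uses the uniform moment estimate \eqref{apv}.

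With such a moment bound for every $a>2$, I would invoke the Kolmogorov continuity theorem (Lemma \ref{lemma01}) to deduce that $Y_\varepsilon$ admits a continuous modification satisfying
\begin{equation*}
\E\big[\|Y_\varepsilon\|_{C^\alpha([0,T];\R)}^{a}\big]\le C
\end{equation*}
uniformly in $\varepsilon$, for any $\alpha\in(1/a,1/2)$. Since the embedding $C^\alpha([0,T];\R)\hookrightarrow C([0,T];\R)$ is compact, a Markov-type argument shows that for every $R>0$ the ball $\{y:\|y\|_{C^\alpha}\le R\}$ is compact in $\mathcal{Y}_Y$, and its complement has $\mu_{Y_\varepsilon}$-measure at most $C R^{-a}$, which can be made arbitrarily small uniformly in $\varepsilon$. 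This yields tightness of $\{\mu_{Y_\varepsilon}\}$ on $\mathcal{Y}_Y$.

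There is no essential obstacle here; the only minor care needed is the use of \eqref{FG1} to sum in $k$ after the BDG estimate (so that we exploit the $\ell^2$ structure in the Wiener process indices rather than bounding term by term). Everything else is routine once the a priori energy estimate \eqref{apv} is in hand.
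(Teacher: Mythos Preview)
Your proposal is correct and follows essentially the same route as the paper's proof: apply BDG to the increments, control the quadratic variation using Cauchy--Schwarz, the $L^2$-boundedness of $\mathcal{P}_H$, the growth condition \eqref{FG1}, and the uniform energy bound \eqref{apv}, then invoke Kolmogorov continuity and the compact embedding $C^\beta\hookrightarrow C^0$. The paper's argument is slightly terser but identical in structure.
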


\begin{proof}
It is easy to see that, $Y_\varepsilon(t)=\sum_{k\geq1}\int_0^t\intTor{ \vu_{\varepsilon} \cdot \mathcal{P}_H{\vc{G}_k}( \vu _\varepsilon ) } \,\Dif W$ is a square integrable martingale, for every $\varepsilon \in (0,1)$. Notice that for $a > 2$
\begin{align*}
\E\Big[ \Big|\sum_{k\ge 1} \int_s^t \int_{\T^3}\vu_{\varepsilon} \cdot\mathcal{P}_H {\vc{G}_k}(\vu _\varepsilon )\Big|^a \Big] &\le \E\Big[ \int_s^t \sum_{k=1}^{\infty} \Big|\int_{\T^3}\vu_{\varepsilon} \cdot {\vc{G}_k}(\vu_\varepsilon)\Big|^2  \Big]^{a/2} \\
& \le |t-s|^{a/2}\, \Big(1 + \E \Big[\sup_{0\le t \le T} \|\vu_{\varepsilon}\|^a_{L^2(\T^3)} \Big]\Big)
\le C|t-s|^{a/2}.
\end{align*}
Therefore, we can apply the classical Kolmogorov continuity theorem (cf. Lemma~\ref{lemma01}) to conclude that, for some $\beta>0$
$$
\sum_{k\geq1}\int_0^t\intTor{ \vu_{\varepsilon} \cdot {\mathcal{P}_H\vc{G}_k}(\vu _\varepsilon ) } \,\Dif W \in L^a(\Omega; C^{\beta}(0,T; \R)).
$$
Therefore, using the well-known compact embedding of $C^{\beta}$ into $C^0$, tightness of law follows.
\end{proof}

Making use of results obtained from Proposition~\ref{prop:rhotight}, Proposition~\ref{rhoutight14}, Proposition~\ref{rhoutight1}, Proposition~\ref{rhoutight12}, and Proposition~\ref{rhoutight13}, we conclude that
\begin{Corollary}
The set $\{\mu^\varepsilon;\,\varepsilon\in(0,1)\}$ is tight on $\mathcal{Y}$. 
\end{Corollary}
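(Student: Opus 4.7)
The plan is to derive tightness of the joint law from tightness of each marginal, using the elementary fact that a product of compact sets is compact in the product topology. First, I would observe that $\mu^\varepsilon$ is the joint law of the tuple
$$(\bu_\varepsilon, W_\varepsilon, C_\varepsilon, D_\varepsilon, E_\varepsilon, F_\varepsilon, \mathcal{V}_\varepsilon, X_\varepsilon, Y_\varepsilon)$$
on the Cartesian product space $\mathcal{Y} = \mathcal{Y}_\bu \times \mathcal{Y}_W \times \mathcal{Y}_C \times \mathcal{Y}_D \times \mathcal{Y}_E \times \mathcal{Y}_F \times \mathcal{Y}_{\mathcal{V}} \times \mathcal{Y}_X \times \mathcal{Y}_Y$. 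The marginal of $\mu^\varepsilon$ on each factor coincides with the individual law studied in the previous propositions, and tightness of $\mu_{W_\varepsilon}$ on $\mathcal{Y}_W$ is immediate (in fact its law is independent of $\varepsilon$, so the singleton $\{\mu_W\}$ is trivially tight in a Polish space).

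Given any $\eta > 0$, I would invoke Propositions \ref{prop:rhotight}, \ref{rhoutight14}, \ref{rhoutight1}, \ref{rhoutight12} and \ref{rhoutight13} to extract, for each of the nine factors, a compact subset $K_{(\cdot)}$ of the corresponding path space such that the associated marginal assigns mass at least $1 - \eta/9$ to $K_{(\cdot)}$, uniformly in $\varepsilon \in (0,1)$. The product
$$K := K_\bu \times K_W \times K_C \times K_D \times K_E \times K_F \times K_{\mathcal{V}} \times K_X \times K_Y$$
is then compact in $\mathcal{Y}$ by Tychonoff's theorem, and a union bound over complements yields
$$\mu^\varepsilon(K^c) \,\le\, \sum_{\text{factors}} \mu^\varepsilon_{(\cdot)}\bigl(K_{(\cdot)}^c\bigr) \,<\, \eta$$
uniformly in $\varepsilon$, which is precisely the definition of tightness on $\mathcal{Y}$.

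The main (and in truth only) subtlety is that several of the factor spaces, namely $\mathcal{Y}_C$, $\mathcal{Y}_D$, $\mathcal{Y}_E$, $\mathcal{Y}_F$ and $\mathcal{Y}_{\mathcal{V}}$, are endowed with weak-$*$ topologies and are merely quasi-Polish rather than Polish; likewise $\mathcal{Y}_\bu$ carries the weak topology. Consequently, no single factor is a complete metric space. However, the notion of tightness through compact exhaustion remains meaningful, and more importantly the Skorokhod-Jakubowski representation theorem recalled in Theorem \ref{newth} is tailored precisely to this quasi-Polish setting; hence the tightness established above is exactly the input needed to pass to a new probability space on which the approximate sequence converges almost surely. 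I do not anticipate a serious obstacle here, as the argument is standard product-space tightness; the content of the corollary lies entirely in the individual Propositions that precede it.
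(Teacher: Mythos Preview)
Your proposal is correct and is precisely the standard product-space tightness argument that the paper invokes implicitly; the paper itself offers no details beyond stating that the corollary follows from the preceding Propositions, so you have simply spelled out what is left unsaid there. Your remark on the quasi-Polish nature of several factors is apt and aligns with the paper's subsequent appeal to the Skorokhod--Jakubowski theorem.
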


Having secured all necessary tightness results, we can now apply Jakubowski-Skorokhod representation theorem (see also Motyl  \cite{BrzezniakHausenblasRazafimandimby}) to extract almost sure convergence on a new probability space. To that context, we infer the following result:

\begin{Proposition}\label{prop:skorokhod1}
There exists a subsequence $\mu^\varepsilon$ (not relabelled), a probability space $(\tilde\Omega,\tilde\mf,\tilde\prst)$ with $\mathcal{Y}$-valued Borel measurable random variables $(\tilde{\vu }_\varepsilon, \tilde W_\varepsilon, \tilde C_{\varepsilon}, \tilde D_{\varepsilon}, \tilde E_{\varepsilon}, \tilde X_{\varepsilon}, \tilde Y_{\varepsilon}, \tilde{F}_{\varepsilon}, \tilde \nu_{\varepsilon})$, $\varepsilon \in (0,1)$, and\\  $(\tilde{\vu },\tilde W, \tilde C, \tilde D, \tilde E, \tilde X, \tilde Y, \tilde{F}, \tilde \nu)$ such that 
\begin{enumerate}
 \item [(1)]the law of $( \tilde{\vu }_\varepsilon, \tilde W_\varepsilon, \tilde C_{\varepsilon}, \tilde D_{\varepsilon}, \tilde E_{\varepsilon}, \tilde X_{\varepsilon}, \tilde Y_{\varepsilon}, \tilde{F}_{\varepsilon}, \tilde \nu_{\varepsilon})$ is given by $\mu^\varepsilon$, $\varepsilon\in(0,1)$,
\item [(2)]the law of $(\tilde{\vu },\tilde W, \tilde C, \tilde D, \tilde E, \tilde X, \tilde Y, \tilde{F}, \tilde \nu)$, denoted by $\mu$, is a Radon measure,
\item [(3)]$( \tilde{\vu }_\varepsilon, \tilde W_\varepsilon, \tilde C_{\varepsilon}, \tilde D_{\varepsilon}, \tilde E_{\varepsilon}, \tilde X_{\varepsilon}, \tilde Y_{\varepsilon},\tilde{F}_{\varepsilon},\, \tilde \nu_{\varepsilon})$ converges $\,\tilde{\prst}$-almost surely to \\$(\tilde{\vu }, \tilde W, \tilde C, \tilde D, \tilde E, \tilde X, \tilde Y, \tilde{F}, \tilde \nu)$ in the topology of $\mathcal{Y}$, i.e.,
\begin{align*}
&\tilde{\vu }_\varepsilon\rightarrow \bar{\vu} \,\, \text{in}\, \,C_w([0,T]; L_{\dv}^2(\T^3)),\quad&\tilde W_\varepsilon \rightarrow \tilde W \,\, \text{in}\, \,C([0,T]; \mathcal{U}_0)),\\
&\tilde C_\varepsilon \rightarrow \tilde C \,\, \text{weak-$*$ in}\, \, L_{w*}^{\infty}(0,T; \mathcal{M}_b(\T^3)), \qquad & \tilde D_\varepsilon \rightarrow \tilde D \,\, \text{weak-$*$ in}\, \, L_{w*}^{\infty}(0,T; \mathcal{M}_b(\T^3)), \\
&\tilde Y_\varepsilon \rightarrow \tilde Y \,\, \text{in}\, \, C([0,T]; \R),\qquad
& \tilde E_\varepsilon \rightarrow \tilde E \,\, \text{weak-$*$ in}\, \, L_{w*}^{\infty}(0,T; \mathcal{M}_b(\T^3)), \\
& \tilde \nu_\varepsilon \rightarrow \tilde \nu \,\, \text{weak-$*$ in}\, \, L_{w*}^{\infty}((0,T)\times \T^3; \mathcal{P}(\R^3)),\qquad& \tilde X_\varepsilon \rightarrow \tilde X \,\, \text{in}\, \,C([0,T];W_{\dv}^{-1,2}(\mathbb{T}^3)), \\
&\tilde{F}_\varepsilon\rightarrow\tilde{F}\,\,\text{weak-$*$ in}\,\,L_{w*}^{\infty}(0,T; \mathcal{M}_b(\T^3)),
\end{align*}
\item [(4)] For any $\varepsilon$, $\tilde W_{\varepsilon}=\tilde W$, $\tilde{\p}$-a.s.
\item [(5)] For any Carath\'{e}odory function $J=J(t,x,\vu)$, where $(t,x)\in (0,T)\times \T^3$ and $\vu \in \R^3$, satisfying for some $p$ the growth condition
$|J(t,x,\vu)| \le 1 + |\vu|^{p}$, uniformly in $(t,x)$. Then we have $\tilde\p$-a.s.
$$
J(\tilde\vu_\varepsilon) \rightarrow \overline{J(\tilde\vu)}\,\, \text{in}\,\, L^r((0,T)\times\T^3),\,\, \text{for all}\,\, 1<r\le\frac{2}{p}.
$$
\end{enumerate}
\end{Proposition}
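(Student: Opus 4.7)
The plan is to apply the Jakubowski--Skorokhod representation theorem (Theorem~\ref{newth}) to the joint family $\{\mu^\varepsilon\}_{\varepsilon\in(0,1)}$ whose tightness on $\mathcal{Y}$ follows from Propositions~\ref{prop:rhotight}--\ref{rhoutight13}. The path space $\mathcal{Y}$ is a finite product of quasi-Polish factors: $C_w([0,T]; L^2_{\dv}(\T^3))$ and the weak-$*$ spaces $L^\infty_{w^*}(0,T; \mathcal{M}_b(\T^3))$ as well as $L^\infty_{w^*}((0,T)\times\T^3; \mathcal{P}(\R^3))$ each carry a countable family of continuous real-valued functions separating points, obtained by pairing with dense countable subsets of their preduals at rational times (and, for the Young-measure factor, by pairing bounded continuous functions on $\R^3$ with test functions on $(0,T)\times\T^3$). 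The space $\mathcal{U}_0$ is Polish and products of quasi-Polish spaces are quasi-Polish, so Theorem~\ref{newth} is applicable.

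With this setup, items (1), (2) and (3) of the proposition are immediate from Theorem~\ref{newth}(a)--(b), the modes of convergence being precisely the topologies listed in Subsection~\ref{subsec:compactness}; the Radon property in (2) uses that each factor is Lusin. Item (4) exploits the distinctive feature (c) of Theorem~\ref{newth}: I would decompose $\mathcal{Y} = \mathcal{Y}_W \times H_2$ with $H_2$ the product of the remaining factors, and take $\pi_1$ the projection onto $\mathcal{Y}_W$. By the Remark following Theorem~\ref{thm:Romeo1}, the cylindrical Wiener process can be chosen common across all $\varepsilon$, so $\pi_1(U_\varepsilon) = W$ has law independent of $\varepsilon$; then Theorem~\ref{newth}(c) yields the pointwise identification $\tilde W_\varepsilon(\tilde\omega) = \tilde W(\tilde\omega)$ for every $\tilde\omega \in \tilde\Omega$.

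For item (5), I would invoke Lemma~\ref{lem001} applied to the sequence $\tilde\vu_\varepsilon$. The uniform bound \eqref{apv} transfers from $\vu_\varepsilon$ to $\tilde\vu_\varepsilon$ by the equality of laws established in (1), so the lemma produces a parametrized random Young measure generated by $\tilde\vu_\varepsilon$; uniqueness of the generating Young measure forces it to coincide $\tilde\p$-a.s.\ with $\tilde\nu$ already delivered by Theorem~\ref{newth}. The polynomial growth hypothesis $|J(t,x,\vu)| \le 1 + |\vu|^p$ then gives the asserted weak convergence $J(\tilde\vu_\varepsilon) \rightharpoonup \overline{J(\tilde\vu)}$ with $\overline{J(\tilde\vu)}(t,x) = \langle \tilde\nu_{t,x}; J(t,x,\cdot)\rangle$ in $L^r((0,T)\times\T^3)$ for $1 < r \le 2/p$, $\tilde\p$-a.s.

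The main subtlety will be to simultaneously accommodate the deterministic identification of the Wiener process from Theorem~\ref{newth}(c) and the Young-measure identification from Lemma~\ref{lem001} inside a single Skorokhod construction: one must enumerate the separating continuous functionals on the product $\mathcal{Y}$ carefully so that both the rigid equality $\tilde W_\varepsilon = \tilde W$ and the Young-measure representation of the limits of nonlinear functionals are preserved along the same subsequence. Once this bookkeeping is in place, the conclusions (1)--(5) follow without further calculation.
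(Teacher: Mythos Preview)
Your proposal is correct and follows exactly the same approach as the paper: items (1)--(3) from the Jakubowski--Skorokhod theorem, item (4) from the fixed-marginal feature of Theorem~\ref{newth}(c), and item (5) from Lemma~\ref{lem001}. The paper's own proof is in fact terser than yours---it simply cites these results without elaborating on the quasi-Polish structure of $\mathcal{Y}$ or the identification of the Young measure $\tilde\nu$ with the one generated by $\tilde\vu_\varepsilon$---so your added care about these points is welcome and does not deviate from the intended argument.
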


\begin{proof}
Proof of the items $(1)$, $(2)$, and $(3)$ directly follow from Jakubowski-Skorokhod representation theorem. For the proof of the item $(4)$, we refer to Theorem \ref{newth}, and \cite{BrzezniakHausenblasRazafimandimby}. For the proof of the item $(5)$, we refer to the Lemma~\ref{lem001}.
\end{proof}

\subsubsection{Passing to the limit}

Note that, in view of the equality of joint laws, the energy inequality \eqref{eq:apriorivarepsilon} and the a priori estimate \eqref{apv} for the new random variables hold on the new probability space.
Making use of convergence results given by Proposition~\ref{prop:skorokhod1}, we can now pass to the limit in approximate equation \eqref{P1NS}, and the energy inequality \eqref{eq:apriorivarepsilon}. First we show that the approximations $\tilde \vu_{\varepsilon}$ solve the equation given by \eqref{P1NS} on the new probability space $(\tilde\Omega,\tilde\mf,\tilde\prst)$.
For that purpose, let us denote by $(\tilde{\mf}_t^\varepsilon)$ and $(\tilde{\mf}_t)$, $\tilde{\prst}$-augmented canonical filtrations of the process $(\tilde{\bf u}_\varepsilon,\tilde{W}_\varepsilon)$ and $(\langle {\mathcal{\tilde V}^{\omega}_{t,x}}; \tilde {\textbf u} \rangle,\tilde{W}, \tilde{X}, \tilde{Y})$, respectively. This means
\begin{equation*}
\begin{split}
\tilde{\mf}_t^\varepsilon&=\sigma\big(\sigma\big(\bfr_t\tilde{\bf u}_\varepsilon,\,\bfr_t \tilde{W}_\varepsilon\big)\cup\big\{N\in\tilde{\mf};\;\tilde{\prst}(N)=0\big\}\big),\quad t\in[0,T],\\
\tilde{\mf}_t&=\sigma\big(\sigma\big(\bfr_t\langle {\mathcal{\tilde V}^{\omega}_{t,x}}; \tilde {\textbf u} \rangle,\,\bfr_t\tilde{W},\, \bfr_t\tilde{X}, \bfr_t\tilde{Y}\big)\cup\big\{N\in\tilde{\mf};\;\tilde{\prst}(N)=0\big\}\big),\quad t\in[0,T],
\end{split}
\end{equation*}
where $\bfr_t$ is the restriction operator to the interval $[0,t]$ acting on various path spaces.

\begin{Proposition}\label{prop:martsol}
For every $\varepsilon\in(0,1)$, $\big((\tilde{\Omega},\tilde{\mf},(\tilde{\mf}_{\varepsilon,t})_{t\ge0},\tilde{\prst}),\tilde{\vu}_\varepsilon,\tilde{W}\big)$ is a finite energy weak martingale solution to \eqref{P1NS} with the initial law $\Lambda_\varepsilon$. 
\end{Proposition}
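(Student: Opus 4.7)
The plan is to verify, on the new probability space, each item of Definition \ref{def:weakMartin} for the tuple $(\tilde\vu_\varepsilon, \tilde W)$ with the filtration $(\tilde{\mf}_{\varepsilon,t})$. Items (a)--(d) are essentially structural: that $(\tilde\Omega, \tilde{\mf}, \tilde{\prst})$ carries the augmented canonical filtration, that $\tilde W$ is an $(\tilde{\mf}_{\varepsilon,t})$-cylindrical Wiener process, that $\tilde\vu_\varepsilon$ is progressively measurable with the required integrability, and that its initial law is $\Lambda_\varepsilon$, all follow from the equality of joint laws in Proposition \ref{prop:skorokhod1}(1), together with the path-space regularity in Proposition \ref{prop:skorokhod1}(3) and L\'evy's martingale characterization applied to $\tilde W$ relative to $(\tilde{\mf}_{\varepsilon,t})$.

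The heart of the proof is item (e$'$), the weak formulation. For any $\bm{\varphi} \in C_{\text{div}}^\infty(\mt)$, I introduce the process
\begin{align*}
M_\varepsilon(t)
:= \langle \tilde\vu_\varepsilon(t), \bm{\varphi}\rangle - \langle \tilde\vu_\varepsilon(0), \bm{\varphi}\rangle
- \int_0^t \langle \tilde\vu_\varepsilon\otimes \tilde\vu_\varepsilon, \nabla\bm{\varphi}\rangle\, \D s + \varepsilon \int_0^t \langle \nabla \tilde\vu_\varepsilon, \nabla\bm{\varphi}\rangle\, \D s.
\end{align*}
On the original probability space, the analogous process equals $\int_0^t \langle \mathcal{G}(\vu_\varepsilon), \bm{\varphi}\rangle\, \D W$ by definition, hence is a square-integrable continuous $(\mf_{\varepsilon,t})$-martingale with quadratic variation $\int_0^t \sum_{k\ge 1}|\langle \mathbf{G}_k(\vu_\varepsilon),\bm{\varphi}\rangle|^2\,\D s$ and cross variation with $W_k$ equal to $\int_0^t \langle \mathbf{G}_k(\vu_\varepsilon), \bm{\varphi}\rangle\, \D s$. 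Each of these three identities can be encoded as the vanishing expectation of a bounded continuous functional of the restricted trajectory up to time $s$ times the corresponding increment; by the equality of joint laws together with a standard uniform integrability argument (made possible by the moment bounds inherited from \eqref{apv}), they transfer verbatim to $M_\varepsilon$ on $(\tilde\Omega, \tilde{\mf}, \tilde{\prst})$. A uniqueness result for stochastic integrals, in which cross variation with the driving Wiener process determines the integrand, then identifies $M_\varepsilon(t) = \int_0^t \langle \mathcal{G}(\tilde\vu_\varepsilon), \bm{\varphi}\rangle\, \D \tilde W$, which is precisely (e$'$).

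For the energy inequality (f), the auxiliary random variable $\tilde Y_\varepsilon$ in Proposition \ref{prop:skorokhod1} encodes the stochastic-integral term $\sum_{k}\int_0^t\int_{\mt}\mathcal{P}_H\mathbf{G}_k(\vu_\varepsilon)\cdot\vu_\varepsilon\,\D x\,\D W_k$ on the original space. A martingale-cross-variation argument analogous to the one above identifies $\tilde Y_\varepsilon(t)$ with $\sum_{k}\int_0^t\int_{\mt}\mathcal{P}_H\mathbf{G}_k(\tilde\vu_\varepsilon)\cdot\tilde\vu_\varepsilon\,\D x\,\D \tilde W_k$ on the new space. All remaining terms in \eqref{eq:apriorivarepsilon} are deterministic, lower-semicontinuous-in-trajectory functionals (the kinetic energy, the viscous dissipation, and the It\^o-correction $\frac12\sum_k\int_0^t\int|\mathcal{P}_H\mathbf{G}_k(\vu_\varepsilon)|^2$, together with the test function $\phi$); by equality of joint laws for the appropriate subset of random variables in Proposition \ref{prop:skorokhod1}, the inequality therefore transports to $\tilde{\prst}$-a.s.\ for every $\phi \in C_c^\infty([0,T))$, $\phi \ge 0$.

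The main obstacle is the well-known fact that the It\^o stochastic integral is not a continuous functional of the pair (path, Wiener path), so neither the equation nor the stochastic contribution to the energy inequality can be obtained by a naive push-forward through the Skorokhod map. The resolution, as sketched, is the \emph{martingale-identification trick}: one transfers to the new probability space only the martingale property and the bracket identities -- which \emph{are} expressible through continuous functionals of finite-dimensional marginals and are therefore preserved under equality of joint laws -- and then reconstructs the stochastic integral from its cross variation with $\tilde W$. Once these two facts are in place, the energy inequality and the finite-energy and integrability conclusions follow by combining Proposition \ref{prop:skorokhod1}(3), the a priori bound \eqref{apv}, and lower semicontinuity of the relevant convex functionals.
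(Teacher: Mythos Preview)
Your proposal is correct and follows precisely the standard martingale-identification argument that the paper invokes by reference: the paper's own proof of this proposition consists of a single sentence pointing to \cite[Theorem 2.9.1]{BrFeHobook} and omitting all details, and what you have written is an accurate outline of that very argument. One small remark: for transferring the energy inequality at fixed $\varepsilon$ you need only Borel measurability of the involved functionals (so that the a.s.\ event carries over by equality of laws), not lower semicontinuity---the latter becomes relevant only later, when passing to the limit $\varepsilon\to 0$.
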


\begin{proof}
Proof of the above proposition is standard, and one can furnish the proof following the same line of argument, as in the monograph by Breit et. al. \cite[Theorem 2.9.1]{BrFeHobook}. For brevity, we skip all the details.
\end{proof}

\noindent We remark that, in light of the above proposition, the new random variables satisfy the following equations and the energy inequality on the new probability space
\begin{itemize}
\item for all  $\bm{\varphi}\in C_{\text{div}}^\infty(\mathbb{T}^3)$ we have
\begin{equation}\label{eq:energyt}
\begin{aligned}
\langle \mathbf{\tilde{u}}_{\varepsilon}(t), \bm\varphi\rangle &= \langle \mathbf{\tilde{u}}_{\varepsilon} (0), \bm{\varphi}\rangle - \int_0^{t}\langle \mathbf{\tilde{u}}_{\varepsilon}\otimes\mathbf{\tilde{u}}_{\varepsilon}, \nabla_x  \bm{\varphi}\rangle\,\mathrm{d}s
+ {\varepsilon}\,\int_0^{t} \langle\nabla_x \mathbf{\tilde{u}}_{\varepsilon}\,, \nabla_x  \bm{\varphi}\rangle\,\mathrm{d}s 
+\int_0^{t}\langle\mathcal{G}(\mathbf{\tilde{u}}_{\varepsilon}), \bm{\varphi}\rangle\,\mathrm{d}W
\end{aligned}
\end{equation}
$\mathbb{P}$-a.s. for all $t\in[0,T]$,
\item the energy inequality\index{energy inequality} 
\begin{align}\label{wq}
& -\int_0^T \partial_t\phi \int_{\T^3}  \frac{1}{2}|\tilde{ {\bf u}}_{\varepsilon} |^2\, \D x  \, \D t
+ {\varepsilon}\, \int_0^T \phi \int_{\T^3}|\nabla_x  \tilde{{\bf u}}_{\varepsilon}|^2 \, \D x\,  {\rm d}t \\
&\leq \phi(0) \int_{\T^n} \frac{1}{2}{|\bfu_{\varepsilon}(0)|^2}\, \D x
+\sum_{k=1}^\infty\int_0^T\phi\bigg(\int_{\T^n}\mathbf{G}_k (\tilde{{\bf u}}_{\varepsilon})\cdot\tilde{{\bf u}}_{\varepsilon}\dx\bigg){\rm d} W_k + \frac{1}{2}\sum_{k = 1}^{\infty}  \int_0^T\phi
\int_{\T^n}|\mathcal{P}_H \mathbf{G}_k (\tilde{{\bf u}}_{\varepsilon}) |^2 \, {\rm d}t \notag
\end{align}
holds $\mathbb P$-a.s., for all $\phi\in C_c^\infty([0,T)),\,\phi\,\ge\,0$.
\end{itemize}

\noindent Now we are in a position to pass to the limit in $\varepsilon$ in \eqref{eq:energyt} and \eqref{wq}. To see this, note that we have a priori estimate \eqref{apv} for the new random variable. Therefore, an application of Lemma~\ref{lem001} helps us to conclude that $\tilde \p$-a.s., 
\begin{align*}
\tilde {\textbf u}_\varepsilon \rightharpoonup \langle {\mathcal{\tilde V}^{\omega}_{t,x}}; \tilde {\textbf u} \rangle, \,\,\text{weakly in}\,\, L^2((0,T);L_{\text{div}}^2(\T^3)).
\end{align*}
Moreover, making use of item $(5)$ of Lemma~\ref{prop:skorokhod1}, we conclude that $\mathbb{P}$-a.s.
$$
\mathbf{G}_k (\tilde {\bf u}_\varepsilon) \rightharpoonup \big \langle \tilde {\mathcal{V}}_{t,x}^\omega ; \mathbf{G}_k (\tilde {\bf u}) \big \rangle \,\, \mbox{weakly in} \,\,L^2((0,T); L^{2}(\T^3)).
$$
This, in particular, implies that $\p$-a.s.
$$
\mathcal{Q}_H\mathbf{G}_k (\tilde {\bf u}_\varepsilon) \rightharpoonup  \mathcal{Q}_H\big \langle \tilde{\mathcal{V}}_{t,x}^\omega ; \mathbf{G}_k (\tilde {\bf u}) \big \rangle \,\, \mbox{weakly in} \,\,L^2((0,T); (L^{2}(\T^3))^\perp).
$$
Indeed, Let $\mathbf{v}\in L^2((0,T);(L^2(\T^3))^\perp)$ then orthogonal property of projection $\mathcal{Q}_H$ implies that
$$\lim_{\varepsilon\to 0}\langle \mathcal{Q}_H\mathbf{G}_k (\tilde {\bf u}_\varepsilon), \mathbf{v}\rangle=\lim_{\varepsilon\to 0}\langle \mathbf{G}_k (\tilde {\bf u}_\varepsilon), \mathbf{v}\rangle=\langle\langle \tilde{\mathcal{V}}_{t,x}^\omega ; \mathbf{G}_k (\tilde {\bf u}) \big \rangle, \mathbf{v}\rangle=\langle\mathcal{Q}_H\langle \tilde{\mathcal{V}}_{t,x}^\omega ; \mathbf{G}_k (\tilde {\bf u}) \big \rangle, \mathbf{v}\rangle $$

As usual, to identify the weak limits related to the nonlinear terms present in the equations, we first need to introduce corresponding concentration defect measures 
\begin{align*}
\tilde \mu_{C}& = \tilde C -\left\langle \mathcal{\tilde V}^{\omega}_{(\cdot, \cdot)}; {\tilde {\bf u}\otimes \tilde {\bf u}} \right\rangle \, \D x \, \D t,\,\,\tilde \mu_{E} = \tilde E- \left\langle \mathcal{\tilde V}^{\omega}_{(\cdot, \cdot)}; \frac{1}{2} {|\tilde {\bf u}|^2} \right \rangle \,\D x\, \D t, \\
\tilde \mu_{D}& = \tilde D -\left\langle \mathcal{\tilde V}^{\omega}_{(\cdot, \cdot)}; \sum_{k \geq 1} |{\vc G}_k (\tilde {\bf u}) |^2 \right\rangle \, \D x \, \D t, \,\,
\tilde \mu_{F} = \tilde F -\Big|\mathcal{Q}_H \left\langle \mathcal{\tilde V}^{\omega}_{(\cdot, \cdot)}; \sum_{k \geq 1} |{\vc G}_k (\tilde {\bf u}) |^2 \right\rangle\Big|^2 \, \D x\, \D t.
\end{align*}
In view of the discussion in Subsection~\ref{ym}, and making use of above concentration defect measures, we can conclude that $ \mathbb{\tilde P}$ almost surely 
\begin{align*}
&\tilde C_\varepsilon \rightharpoonup \left\langle \mathcal{\tilde V}^{\omega}_{(\cdot, \cdot)}; {\tilde {\bf u}\otimes \tilde {\bf u}}\right\rangle \, \D x\, \D t + \tilde \mu_{C}, \,\, \text{ weak-$*$ in}\, \, L_{w*}^{\infty}(0,T; \mathcal{M}_b(\T^3)), \\
&\tilde D_\varepsilon \rightharpoonup \left\langle \mathcal{\tilde V}^{\omega}_{(\cdot, \cdot)}; \sum_{k \geq 1}|{\vc G}_k (\tilde {\bf u}) |^2\right\rangle \, \D x \, \D t + \tilde \mu_{D}, \,\, \text{weak-$*$ in}\, \, L_{w*}^{\infty}(0,T; \mathcal{M}_b(\T^3)), \\
&\tilde E_\varepsilon \rightharpoonup \left\langle \mathcal{\tilde V}^{\omega}_{(\cdot, \cdot)}; \frac{1}{2} {|\tilde {\bf u}|^2}\right\rangle \, \D x \, \D t + \tilde \mu_{E}, \,\, \text{weak-$*$ in}\, \, L_{w*}^{\infty}(0,T; \mathcal{M}^+_b(\T^3)),\\
&\tilde F_\varepsilon \rightharpoonup \Big|\mathcal{Q}_H \left\langle \mathcal{\tilde V}^{\omega}_{(\cdot, \cdot)}; \sum_{k \geq 1} |{\vc G}_k (\tilde {\bf u}) |^2 \right\rangle\Big|^2 \, \D x\, \D t +\tilde{\mu}_F, \,\, \text{weak-$*$ in}\, \, L_{w*}^{\infty}(0,T; \mathcal{M}^+_b(\T^3)).
\end{align*}

\noindent  Note that both defect measures $\tilde{\mu}_E$, and $\tilde{\mu}_F$ are positive, thanks to the lower semi-continuity property of norms. Next, we move on to the martingale terms $\tilde X_{\varepsilon}$, appearing in the momentum equation, and $\tilde Y_{\varepsilon}$, appearing in the energy inequality. Regarding convergence of these terms, we state following propositions.
\begin{Proposition}
For every time $t \in [0,T]$, $\p$-almost surely $\tilde Y_{\varepsilon}(t) \rightarrow \tilde Y(t)$ in $\R$, where $\tilde Y(t)$ is a real valued square-integrable martingale with respect to the filtration $(\tilde{\mf}_t)$.
\end{Proposition}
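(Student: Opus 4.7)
The pointwise convergence is essentially already in hand: Proposition~\ref{prop:skorokhod1} gives $\tilde Y_\varepsilon \to \tilde Y$ in $C([0,T];\R)$ almost surely, so in particular $\tilde Y_\varepsilon(t)\to \tilde Y(t)$ in $\R$ for every fixed $t$. The substantive content is therefore the identification of $\tilde Y$ as a square integrable $(\tilde{\mf}_t)$-martingale. The plan is the standard three-step argument used to transfer martingale structure through the Skorokhod--Jakubowski representation.

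First I would establish uniform higher moments. Since $Y_\varepsilon(t)=\sum_{k\geq 1}\int_0^t\int_{\T^3}\vu_\varepsilon\cdot\mathcal{P}_H\vc G_k(\vu_\varepsilon)\,\D x\,\D W_k$ is a real-valued martingale, the Burkholder--Davis--Gundy inequality together with the growth bound \eqref{FG1} and the a priori estimate \eqref{apv} gives, for any $a>2$,
\begin{equation*}
\E\Big[\sup_{t\in[0,T]}|Y_\varepsilon(t)|^a\Big]\le C\,\E\Big[\Big(\int_0^T \|\vu_\varepsilon\|_{L^2}^2\,(1+\|\vu_\varepsilon\|_{L^2}^2)\,\D t\Big)^{a/2}\Big]\le C(a,\Lambda,T).
\end{equation*}
By equality of laws, the same bound holds for $\tilde Y_\varepsilon$ on $(\tilde\Omega,\tilde\mf,\tilde\prst)$. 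This yields uniform integrability of the family $\{|\tilde Y_\varepsilon(t)|^2\}_{\varepsilon\in(0,1)}$ and, via Fatou together with the a.s.\ limit, $\tilde\E[\sup_{t\in[0,T]}|\tilde Y(t)|^2]<\infty$.

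Next I would transfer the martingale property at the level of $\varepsilon$. Since $Y_\varepsilon$ is a continuous square integrable martingale with respect to the filtration generated by $(\vu_\varepsilon,W_\varepsilon)$, and $\mathrm{Law}(\tilde \vu_\varepsilon,\tilde W_\varepsilon,\tilde Y_\varepsilon)=\mathrm{Law}(\vu_\varepsilon,W_\varepsilon,Y_\varepsilon)$ by Proposition~\ref{prop:skorokhod1}(1), the process $\tilde Y_\varepsilon$ is a continuous square integrable martingale with respect to $(\tilde{\mf}_t^\varepsilon)$. Concretely, for $0\le s<t\le T$ and any bounded continuous function $\Phi$ on the path space of the conditioning variables, one has
\begin{equation*}
\tilde\E\big[(\tilde Y_\varepsilon(t)-\tilde Y_\varepsilon(s))\,\Phi(\bfr_s\tilde\vu_\varepsilon,\bfr_s\tilde W_\varepsilon)\big]=0,
\end{equation*}
and similarly for the quadratic variation identity coming from the It\^o isometry. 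The hardest (but routine) point is now the passage to the limit: combining the a.s.\ convergences in Proposition~\ref{prop:skorokhod1}(3) with the uniform integrability established above, Vitali's convergence theorem lets me send $\varepsilon\to 0$ in the martingale identity and obtain
\begin{equation*}
\tilde\E\big[(\tilde Y(t)-\tilde Y(s))\,\Phi(\bfr_s\langle\mathcal{\tilde V}^\omega_{\cdot,\cdot};\tilde\vu\rangle,\bfr_s\tilde W,\bfr_s\tilde X,\bfr_s\tilde Y)\big]=0,
\end{equation*}
which, as $\Phi$ ranges over the test functions generating $\tilde{\mf}_s$, is exactly the $(\tilde{\mf}_t)$-martingale property of $\tilde Y$. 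Square integrability was already obtained, and continuity follows from the a.s.\ convergence in $C([0,T];\R)$.

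The only genuine technical obstacle is ensuring that the limiting filtration $(\tilde{\mf}_t)$ (generated by the richer tuple $(\langle \mathcal{\tilde V}^\omega;\tilde\vu\rangle,\tilde W,\tilde X,\tilde Y)$) is the right one to make the identification work; this is handled by checking that the test functionals above depend only on the restrictions $\bfr_s$, so measurability with respect to $\tilde{\mf}_s$ is automatic, and no extra information creeps in from the future.
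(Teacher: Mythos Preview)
Your proposal is correct and follows essentially the same route as the paper: deduce pointwise convergence from the a.s.\ convergence in $C([0,T];\R)$ given by the Skorokhod--Jakubowski representation, establish uniform higher moments via BDG and the a priori bound~\eqref{apv}, and pass to the limit in the martingale identity $\tilde\E[\mathfrak{L}_s(\tilde\Phi_\varepsilon)(\tilde Y_\varepsilon(t)-\tilde Y_\varepsilon(s))]=0$ using Vitali's theorem. The only minor point worth tightening is that at the $\varepsilon$-level you should test against functionals of the full tuple $\tilde\Phi_\varepsilon=(\tilde\vu_\varepsilon,\tilde W,\tilde X_\varepsilon,\tilde Y_\varepsilon)$ rather than just $(\tilde\vu_\varepsilon,\tilde W_\varepsilon)$, since the limiting filtration $(\tilde{\mf}_t)$ is generated by all four processes; this is harmless because $\tilde X_\varepsilon,\tilde Y_\varepsilon$ are adapted to $(\tilde{\mf}_t^\varepsilon)$ anyway, and the paper proceeds in exactly this way.
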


\begin{proof}
First of all, in view of the Proposition~\ref{prop:skorokhod1}, we conclude that $\tilde Y_{\varepsilon} \rightarrow  \tilde Y $ $\p$-a.s. in $C([0,T]; \R)$. To claim that $\tilde Y(t)$ is a martingale, as usual, it is sufficient to show that
$$
\tilde \E[\tilde Y(t)| \mathcal{\tilde F}_s] = \tilde Y(s),
$$
for all $t,s \in [0,T]$ with $s \le t$. In other words, it is enough to prove that
$$
\tilde \E \Big[ \mathfrak{L}_s(\tilde \Phi) \big(\tilde Y(t)-\tilde Y(s)\big) \Big]=0,
$$
where we denote $\tilde \Phi := (\langle {\mathcal{\tilde V}^{\omega}_{t,x}}; \tilde {\textbf u} \rangle,\tilde{W}, \tilde{X}, \tilde{Y})$, and on the path space $\underline{\mathcal{Y}} :=\mathcal{Y}_{{\bf u}} \times \mathcal{Y}_{W} \times \mathcal{Y}_{X} \times \mathcal{Y}_{Y}$, we denote $\mathfrak{L}_s$ by any bounded continuous functional which depends on on the values of $\tilde \Phi$ restricted to $[0,s]$. The idea is to use the fact that $\tilde Y_{\varepsilon}(t)$ is a martingale, i.e.,
$$
\tilde \E \Big[ \mathfrak{L}_s(\tilde \Phi_{\varepsilon}) \big(\tilde Y_{\varepsilon}(t)-\tilde Y_{\varepsilon}(s)\big) \Big]=0,
$$
for all bounded continuous functional $\mathfrak{L}_s$ on the same path space, and $\tilde \Phi_{\varepsilon} = (\tilde {\bf u}_{\varepsilon}, \tilde W, \tilde X_\varepsilon, \tilde Y_\varepsilon)$. At this point, we recall  Proposition~\ref{prop:skorokhod1} to conclude that $\tilde \Phi_{\varepsilon} \rightarrow \tilde \Phi$, $\p$-a.s. in the (weak) topology of $\underline{\mathcal{Y}}$. This, in particular, implies that $\mathfrak{L}_s(\tilde \Phi_{\varepsilon}) \rightarrow \mathfrak{L}_s(\tilde \Phi)$ $\p$-a.s. 
Now given this information, along with the fact that $\tilde Y_{\varepsilon}(t) \in L^2(\tilde \Omega)$, we may apply classical Vitali's convergence theorem to pass to the limit in $\varepsilon$ to conclude that $\tilde Y(t)$ is a martingale. 
\end{proof}
\begin{Proposition}For each time $t\in [0,T]$, $\tilde X_{\varepsilon}(t) \rightarrow \tilde X(t)$, $\p$-almost surely in the topology of $H_{\dv}^{-1}(\T^3)$. Moreover, $\tilde {X}(t)$ is also a $H_{\dv}^{-1}(\T^3)$-valued square integrable martingale with respect to the filtration $(\tilde{\mf}_t)$.
\end{Proposition}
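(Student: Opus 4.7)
The first assertion is essentially free from what is already on the table: item (3) of Proposition~\ref{prop:skorokhod1} gives $\tilde{X}_\varepsilon \to \tilde{X}$ almost surely in the uniform-in-time topology of $C([0,T];H^{-1}_{\dv}(\T^3))$, so evaluating at a fixed $t\in[0,T]$ yields $\tilde{X}_\varepsilon(t)\to \tilde{X}(t)$ in $H^{-1}_{\dv}(\T^3)$ for every $t$. Consequently we may focus on the martingale property.

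My plan is to copy, almost verbatim, the argument just given for $\tilde{Y}$, upgraded to the Hilbert-space-valued setting. First, because the joint laws are preserved by Proposition~\ref{prop:skorokhod1}(1), one identifies $\tilde{X}_\varepsilon(t)$ with the genuine stochastic integral $\int_0^t \mathcal{P}_H\mathcal{G}(\tilde{\bu}_\varepsilon)\,\dif\tilde{W}$ on the new basis (this transfer of the stochastic integral structure is carried out in detail in Breit--Feireisl--Hofmanov\'a, and only requires that $\tilde{W}_\varepsilon=\tilde{W}$ as in Proposition~\ref{prop:skorokhod1}(4)). Thus $\tilde{X}_\varepsilon$ is a square integrable $H^{-1}_{\dv}$-valued $(\tilde{\mf}_t^\varepsilon)$-martingale. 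Fix $0\le s\le t\le T$ and a bounded continuous functional $\mathfrak{L}_s$ on the restricted path space; to conclude that $\tilde{X}$ is an $(\tilde{\mf}_t)$-martingale, it suffices, after testing by an arbitrary $\bm{\varphi}\in H^{1}_{\dv}(\T^3)$, to show the scalar identity
\[
\tilde{\stred}\Bigl[\mathfrak{L}_s(\tilde{\Phi})\,\langle \tilde{X}(t)-\tilde{X}(s),\bm{\varphi}\rangle\Bigr]=0,
\]
knowing that the analogous identity holds at level $\varepsilon$.

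The pointwise integrand converges $\tilde{\prst}$-almost surely thanks to continuity of $\mathfrak{L}_s$, Proposition~\ref{prop:skorokhod1}(3), and the first part proved above. The main (and really the only) obstacle is uniform integrability, which I plan to secure via Burkholder--Davis--Gundy in the Hilbert space $H^{-1}_{\dv}$ combined with the growth assumption \eqref{FG1} and the continuous embedding $L^2\hookrightarrow H^{-1}_{\dv}$: for any $p>2$,
\[
\tilde{\stred}\bigl\|\tilde{X}_\varepsilon(t)\bigr\|_{H^{-1}_{\dv}}^{p}
\le C_p\,\tilde{\stred}\Bigl(\int_0^t \sum_{k\ge 1}\|\mathcal{P}_H\vc{G}_k(\tilde{\bu}_\varepsilon)\|_{H^{-1}_{\dv}}^2\,\D s\Bigr)^{p/2}
\le C_p\,\tilde{\stred}\Bigl(\int_0^t (1+\|\tilde{\bu}_\varepsilon\|_{L^2}^2)\,\D s\Bigr)^{p/2},
\]
which is bounded uniformly in $\varepsilon$ by \eqref{apv}. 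Boundedness of $\mathfrak{L}_s$ then gives the $L^{p/2}$-bound required to invoke Vitali's convergence theorem, and the martingale identity passes to the limit.

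Finally, square integrability of $\tilde{X}(t)$ in $H^{-1}_{\dv}(\T^3)$ follows by Fatou applied to the same BDG bound at $p=2$, so $\tilde{X}$ is a square integrable $H^{-1}_{\dv}$-valued $(\tilde{\mf}_t)$-martingale, as claimed. The hardest step, as anticipated, is the uniform-integrability estimate; everything else is standard adaptation of the $\tilde{Y}$-argument from the previous proposition to the Hilbert-space-valued setting via duality pairing with $H^1_{\dv}$.
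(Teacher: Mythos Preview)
Your proposal is correct and follows essentially the same route as the paper: both deduce pointwise convergence from the $C([0,T];H^{-1}_{\dv})$ convergence of Proposition~\ref{prop:skorokhod1}, reduce the martingale property to scalar identities by pairing against test elements (the paper uses an orthonormal basis $g_i$ of $H^{-1}_{\dv}$ while you use $\bm{\varphi}\in H^1_{\dv}$, which is equivalent), and pass to the limit via Vitali using the same BDG/\eqref{FG1}/\eqref{apv} bound for uniform integrability. Your additional remarks on transferring the stochastic integral to the new basis and on square integrability via Fatou are accurate and only make the argument slightly more explicit than the paper's version.
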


\begin{proof}
Note that, as before, we would not be able to identify the structure of the martingale $\tilde X(t)$, instead we just prove that $\tilde X(t)$ is a martingale. In what follows, with the help of the Proposition~\ref{prop:skorokhod1}, we can conclude that for each $t \in [0,T]$, $ \tilde X_{\varepsilon}(t) \rightarrow \tilde X(t)$, $\p$-a.s. in the topology of $H^{-1}_{\text{div}}(\T^3)$. Fianlly to show that $\tilde X(t)$ is a martingale, it is enough to demonstrate that for all $i\ge1$
$$
\tilde \E \Big[ \mathfrak{L}_s(\tilde \Phi) \big <\tilde X(t)-\tilde X(s), g_i \big> \Big]=0,
$$
where $g_i$'s are given orthonormal basis for the space $H_{\dv}^{-1}(\T^3)$. We follow the usual argument to establish the result. To that ocntext, we first use the information that 
$$
\tilde \E \Big[ \mathfrak{L}_s(\tilde \Phi_{\varepsilon}) \big<\tilde X_{\varepsilon}(t)-\tilde X_{\varepsilon}(s), g_i \big> \Big]=0,
$$
for all $i \ge 1$. Then, like before, we can pass to the limit in the parameter $\varepsilon$ to show that $\tilde M(t)$ is a martingale. Indeed, this argument requires uniform integrabilty in $\omega$ variable, and can be achieved using BDG inequality:
\begin{align*}
\tilde \E \Big[\big|\big< \tilde X_\varepsilon(t), g_i \big>\big|^p  \Big] &=\tilde \E \bigg[\bigg|\Big< \int_0^t\mathcal{P}_H{\mathcal{G}}( \tilde {\textbf u}_\varepsilon )\,\Dif \tilde W, g_i \Big>\bigg|^p  \bigg] 
 \le C \, \tilde \E \Bigg[\sup_{0\le t \le T} \bigg \| \int_0^t \mathcal{P}_H {\mathcal{G}}( \tilde {\textbf u}_\varepsilon )\,\Dif \tilde W \bigg\|^p_{H_{\dv}^{-1}(\T^3)}\Bigg]  \\
&\le C \,\tilde \E \Bigg[\bigg(\int_0^T \| {\mathcal{G}}(\tilde {\textbf u}_\varepsilon )\|^2_{L_2(\mathcal{U}, L^{2}(\T^3))} \,\D s \bigg)^{p/2}\Bigg] \le C.
\end{align*}
This finishes the proof.
\end{proof}
\noindent In view of the above discussions, we can pass to the limit in \eqref{eq:energyt} to conclude that
\begin{equation*} 
\begin{aligned}
&\int_{\T^3} \langle \tilde{\mathcal{V}}^{\omega}_{\tau,x};\vu \rangle \cdot \bm{\varphi}(\tau, \cdot) \,\D x - \int_{\T^3} \langle \tilde{\mathcal{V}}^{\omega}_{0,x};\vu \rangle \cdot \bm{\varphi}(0,\cdot) \, \D x \\
&\qquad = \int_{0}^{\tau} \int_{\T^3}\langle \tilde{\mathcal{V}}^{\omega}_{t,x}; {\vu \otimes \vu }\rangle: \nabla_x \bm{\varphi} \, \D x \, \D t + \int_{\T^3} \bm{\varphi}\,\int_0^{\tau} d\tilde{X}(t) \,\D x+ \int_{0}^{\tau} \int_{\T^3}  \nabla_x \bm{\varphi}: d\tilde{\mu}_C,
\end{aligned}
\end{equation*}
holds  $\tilde{\p}$-a.s., for all $\tau \in [0,T)$, and for all $\bm{\varphi} \in C_{\text{div}}^{\infty}(\T^3;\mathbb{R}^3)$. This implies that \eqref{second condition measure-valued solution} holds. Next, we focus on proving the energy inequality \eqref{third condition measure-valued solution}. To that context, making use of identifications of weak limits of various terms involved in the energy inequality, we can pass to the limit in $\varepsilon$ in \eqref{wq}. This yield
\begin{align}\label{tr}
&-\int_{0}^{T}\partial_t\phi\bigg(\int_{\mathbb{T}^3}\left\langle \mathcal{\tilde{V}}^{\omega}_{\tau,x};\frac{|{\bf u}|^2}{2} \right\rangle \dx +\mathcal{\tilde{D}}(\tau)\bigg){\rm d}\tau 
\le\,\phi(0)\int_{\mathbb{T}^3}\left\langle \mathcal{\tilde{V}}^{\omega}_{0,x};\frac{|{\bf u}|^2}{2} \right\rangle \, \D x \notag \\
&\quad + \frac{1}{2} \sum_{k\ge\,1} \int_0^{T}\phi(\tau) \int_{\T^3} \left\langle \mathcal{V}^{\omega}_{\tau,x};{|{\textbf G_k( u)}|^2} \right\rangle \,\D x\, \D \tau 
-\frac{1}{2}\sum_{k\ge\,1}\int_s^t \phi(\tau) \int_{\mathbb{T}^3}\Big(\mathcal{Q}_H\left\langle \tilde{\mathcal{V}}^{\omega}_{\tau,x};{|{\textbf G_k( u)}|}\right\rangle\Big)^2\, \D x\, {\rm d}\tau \notag \\
& \qquad +\frac{1}{2}\int_{0}^{T}\phi(\tau)\int_{\mathbb{T}^3}{\rm d}\tilde\mu_D \, \D \tau
- \frac{1}{2}\int_{0}^{T}\phi(\tau)\int_{\mathbb{T}^3}{\rm d}\tilde\mu_F\, \D \tau
+\int_{0}^{T}\phi(\tau)\,{\rm d}\tilde{Y}(\tau)
\end{align}
holds $\tilde{\mathbb P}$-a.s., for all $\phi\in C_c^\infty([0,T)),\,\phi\,\ge\,0$. Note that here $\mathcal{\tilde D}(\tau):= \tilde \mu_{E}(\tau)(\T^3)$. To proceed, we first fix any $s$ and $t$ such that
$0\,\textless\,s\,\textless\,t\,\textless\,T$. For any $r\,\textgreater\,0$ with $0\,\textless\,s-r\textless\,t+r\,\textless\,T$, let us denote by $\phi_r$, a Lipschitz fucntion which is linear on $[s-r,s]$ and $[t, t+r]$ such that
$$\phi_r(\tau)=\begin{cases}
	0,&\text{if}\,\, \tau\in[0,s-r]\,\,\text{or}\,\,\tau\in[t+r,T]\\
	1,&\text{if}\,\,\tau\in[s,t].
\end{cases}$$
Then, a standard regularization argument reveals that $\phi_r$ can be used as an admissible test fuction in \eqref{wq}. Therefore, replacing the test function $\phi$ by $\phi_r$ in \eqref{tr}, we get $\p$-a.s., for all $0\,\textless\,s\,\textless\,t\,\textless\,T$
\begin{align}\label{pr}
&\frac{1}{r}\int_{t}^{t+r}\bigg(\int_{\mathbb{T}^3}\left\langle \mathcal{\tilde{V}}^{\omega}_{\tau,x};\frac{|{\bf u}|^2}{2} \right\rangle \dx +\mathcal{\tilde{D}}(\tau)\bigg){\rm d}\tau
\le\,\frac{1}{r}\int_{s-r}^s\bigg(\int_{\mathbb{T}^3}\left\langle \mathcal{\tilde{V}}^{\omega}_{\tau,x};\frac{|{\bf u}|^2}{2} \right\rangle \dx+\mathcal{\tilde{D}}(\tau)\bigg){\rm d}\tau \notag \\
& \quad + \frac{1}{2} \sum_{k\ge\,1} \int_{s-r}^{t+r}\phi_r(\tau) \int_{\T^3} \left\langle \mathcal{V}^{\omega}_{\tau,x};{|{\textbf G_k( u)}|^2} \right\rangle \, \D x \, \D \tau -\frac{1}{2} \sum_{k\ge\,1} \int_{s-r}^{t+r}\phi_r(\tau) \int_{\mathbb{T}^3}\Big(\mathcal{Q}_H\left\langle \tilde{\mathcal{V}}^{\omega}_{\tau,x};{|{\textbf G_k( u)}|}\right\rangle\Big)^2\, \D x\,{\rm d}\tau\notag\\
&\qquad +\frac{1}{2}\int_{s-r}^{t+r}\phi_r(\tau) \int_{\mathbb{T}^3} {\rm d}\tilde\mu_D \, \D \tau  -\frac{1}{2}\int_{s-r}^{t+r} \phi_r(\tau) \int_{\mathbb{T}^3} \D \tilde{\mu}_F\, {\rm d}\tau +\int_{s-r}^{t+r}\phi_r(\tau)\,{\rm d}\tilde Y(\tau).
\end{align}
Now using the non-negativity of the defect measure $\tilde{\mu}_F$, and letting $r\to 0^+$ in \eqref{pr}, we obtain $\p$-a.s., for all $0\,\textless\,s\,\textless\,t\,\textless\,T$
\begin{align}\label{ps}
&\lim_{r\to 0^+}\frac{1}{r}\int_{t}^{t+r}\bigg(\int_{\mathbb{T}^3}\left\langle \mathcal{\tilde{V}}^{\omega}_{\tau,x};\frac{|{\bf u}|^2}{2} \right\rangle \dx +\mathcal{\tilde{D}}(\tau)\bigg){\rm d}\tau\,\notag\\&\qquad\le\,\lim_{r\to0 ^+}\frac{1}{r}\int_{s-r}^s\bigg(\int_{\mathbb{T}^3}\left\langle \mathcal{\tilde{V}}^{\omega}_{\tau,x};\frac{|{\bf u}|^2}{2} \right\rangle \dx+\mathcal{\tilde{D}}(\tau)\bigg){\rm d}\tau+\frac{1}{2} \sum_{k\ge\,1} \int_{s}^{t}\int_{\T^3} \left\langle \mathcal{V}^{\omega}_{\tau,x};{|{\textbf G_k( u)}|^2} \right\rangle \dx\,{\rm d}\tau\notag\\&\qquad\qquad-\frac{1}{2} \sum_{k\ge\,1} \int_{s}^{t}\int_{\mathbb{T}^3}\Big(\mathcal{Q}_H\left\langle \tilde{\mathcal{V}}^{\omega}_{\tau,x};{|{\textbf G_k( u)}|}\right\rangle\Big)^2\dx\,{\rm d}\tau+\frac{1}{2}\int_{s}^{t}\int_{\mathbb{T}^3}{\rm d}\tilde\mu_D \, \D \tau +\int_{s}^{t}{\rm d}\tilde Y(\tau)
\end{align}

%
We remark that for $s=0$ we need to use a slightly different test function to conclude the result. In this case we take 
$$\phi_r(\tau)=\begin{cases}
	1,&\text{if}\,\,\tau\in[0,t]\\
	\text{linear},&\text{if}\,\,\tau\in[t,t+r]\\
	0,&\text{otherwise}.
\end{cases}$$
and apply the same argument as before to establish that the energy inequality \eqref{third condition measure-valued solution} holds.

Now we are only left with the verifications of \eqref{fourth condition measure-valued solutions}, and item (i) of Definition~\ref{def:dissMartin}. To proceed, we start with the following lemma.

\begin{Lemma}\label{rhoutight1311}
Given a stochastic process $h$, as in item (i) of Definition~\ref{def:dissMartin}
$$
\mathrm{d}h  = D^d_th\,\mathrm{d}t  + \mathbb{D}^s_th\,\mathrm{d} \tilde W,
$$
the cross variation with $\tilde X$ is given by 
\begin{align*}
	\Big<\hspace{-0.14cm}\Big<h(t), \tilde {X}(t)  \Big>\hspace{-0.14cm}\Big> 
	= \sum_{i,j}\Bigg(\sum_{k = 1}^{\infty}  \int_0^t\langle \mathcal{P}_H \left\langle \tilde{ \mathcal{V}}^{\omega}_{s,x}; \mathbf{G}_k (\vu )\right\rangle, g_i\rangle \,\langle\mathbb{D}_t^s h(e_k), g_j\rangle \,ds\Bigg) g_i\otimes g_j .
\end{align*}
where $g_i$'s are orthonormal basis for $H^{-1}_{\dv}(\mathbb{T}^3)$ and bracket $\langle\cdot,\cdot\rangle$ denotes inner product in the same space.\end{Lemma}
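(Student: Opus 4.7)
The strategy is to first compute the cross variation $\llangle h, \tilde X_\varepsilon \rrangle$ exactly at the approximate level, where $\tilde X_\varepsilon(t) = \int_0^t \mathcal{P}_H \mathcal{G}(\tilde{\bf u}_\varepsilon)\,\mathrm{d}\tilde W$ is a bona fide It\^o integral, and then to pass to the limit $\varepsilon \to 0$ using the convergence results of Proposition~\ref{prop:skorokhod1} together with the weak limit identification for $\mathbf{G}_k(\tilde{\bf u}_\varepsilon)$ via the Young measure $\mathcal{\tilde V}^{\omega}_{(\cdot,\cdot)}$. Since the explicit structure of $\tilde X$ in terms of $\tilde W$ is not available (the approximate filtration depends on $\varepsilon$), we cannot expect to identify $\tilde X$ as an It\^o integral; instead, the cross variation with a smooth test semimartingale $h$ is the \emph{only} information we need, and this is recovered by a martingale characterization in the limit.

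First, at fixed $\varepsilon$, we fix orthonormal bases $\{g_i\}\subset H^{-1}_{\dv}(\mathbb{T}^3)$ and $\{e_k\}\subset \mathfrak U$, and apply the It\^o product formula to the scalar semimartingales $\langle h(t), g_i\rangle$ and $\langle \tilde X_\varepsilon(t), g_j\rangle$. The quadratic covariation term comes solely from the stochastic integrals against the common Wiener process $\tilde W$, so that
\begin{equation*}
\llangle \langle h,g_i\rangle,\, \langle \tilde X_\varepsilon,g_j\rangle \rrangle_t
= \sum_{k\ge 1}\int_0^t \langle \mathcal{P}_H \mathbf{G}_k(\tilde{\bf u}_\varepsilon),\, g_i\rangle\,\langle \mathbb{D}^s_t h(e_k),\, g_j\rangle\, \mathrm{d}s.
\end{equation*}
Summing against $g_i\otimes g_j$ yields the analogue of the claimed formula with $\mathcal{P}_H\mathbf{G}_k(\tilde{\bf u}_\varepsilon)$ in place of $\mathcal{P}_H \langle \mathcal{\tilde V}^{\omega}_{s,x}; \mathbf{G}_k({\bf u})\rangle$. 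Equivalently, the process
\[
N^{ij}_\varepsilon(t) := \langle h(t),g_i\rangle\,\langle \tilde X_\varepsilon(t),g_j\rangle
- \int_0^t \langle h,g_i\rangle\,\mathrm{d}\langle \tilde X_\varepsilon,g_j\rangle
- \int_0^t \langle \tilde X_\varepsilon,g_j\rangle\,\mathrm{d}\langle h,g_i\rangle
- C^{ij}_\varepsilon(t)
\]
is a continuous $(\tilde{\mathbb F}_{\varepsilon,t})$-martingale, where $C^{ij}_\varepsilon(t)$ denotes the right-hand side displayed above.

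Next, I would pass to the limit $\varepsilon \to 0$ in the martingale identity $\tilde{\mathbb E}[\mathfrak{L}_s(\tilde\Phi_\varepsilon)(N^{ij}_\varepsilon(t)-N^{ij}_\varepsilon(s))]=0$ for arbitrary bounded continuous test functionals $\mathfrak{L}_s$ on the canonical path space up to time $s$. The convergence of the terms involving $\tilde X_\varepsilon \to \tilde X$ in $C([0,T];H^{-1}_{\dv})$ and of $h$ (which is unchanged) is immediate; for the stochastic integrals $\int_0^t \langle h,g_i\rangle\,\mathrm{d}\langle \tilde X_\varepsilon,g_j\rangle$ one uses the standard stability of It\^o integrals under Skorokhod representations, exactly as in the proof of Proposition~\ref{prop:martsol}. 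For the cross variation candidate $C^{ij}_\varepsilon(t)$, the key ingredient is that by item $(5)$ of Proposition~\ref{prop:skorokhod1} applied with $J=\mathbf{G}_k$ (which satisfies the required growth by \eqref{FG1}), $\mathbf{G}_k(\tilde{\bf u}_\varepsilon) \rightharpoonup \langle \mathcal{\tilde V}^{\omega}_{s,x};\mathbf{G}_k({\bf u})\rangle$ weakly in $L^2$; since $\mathcal{P}_H$ is a bounded linear operator it commutes with the weak limit, giving $C^{ij}_\varepsilon(t)\to C^{ij}(t)$. Uniform integrability in $\omega$, needed to pass the expectation through, follows from BDG together with the uniform bound \eqref{apv} and the hypotheses on $h$ and $\mathbb{D}^s_t h$.

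The main obstacle is the identification of the weak limit inside the quadratic variation: since the filtration $(\tilde{\mathbb F}_{\varepsilon,t})$ depends on $\varepsilon$ and the map ${\bf u}\mapsto \mathcal{P}_H \mathbf{G}_k({\bf u})$ is nonlinear, the natural candidate $\mathcal{P}_H \mathbf{G}_k(\langle \mathcal{\tilde V};{\bf u}\rangle)$ is \emph{not} the correct limit, and must be replaced by $\mathcal{P}_H \langle \mathcal{\tilde V};\mathbf{G}_k({\bf u})\rangle$. This is precisely the content of the Young-measure representation of Lemma~\ref{lem001}, which is the reason our notion of dissipative measure-valued solution stipulates this specific cross-variation structure rather than the structure of the martingale $\tilde X$ itself. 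Once the limiting martingale identity is established for every bounded $\mathfrak L_s$, a standard monotone class argument shows that $N^{ij}(t)$ is an $(\tilde{\mathbb F}_t)$-martingale, which by definition means $C^{ij}(t) = \llangle \langle h,g_i\rangle,\langle \tilde X,g_j\rangle\rrangle_t$, and reassembling in $g_i\otimes g_j$ yields the stated formula.
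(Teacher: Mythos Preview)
Your proposal is correct and takes essentially the same approach as the paper: compute the cross variation $\big\langle\hspace{-0.12cm}\big\langle h,\tilde X_\varepsilon\big\rangle\hspace{-0.12cm}\big\rangle$ explicitly at the approximate level via the It\^o calculus for the scalar components $\langle h,g_i\rangle$ and $\langle\tilde X_\varepsilon,g_j\rangle$, encode it as a martingale identity tested against bounded continuous functionals $\mathfrak L_s$ of the past, and pass $\varepsilon\to 0$ using the Young-measure convergence $\mathbf{G}_k(\tilde{\bf u}_\varepsilon)\rightharpoonup\langle\tilde{\mathcal V}^{\omega}_{\cdot,\cdot};\mathbf{G}_k\rangle$ from item (5) of Proposition~\ref{prop:skorokhod1}, continuity of $\mathcal P_H$, and uniform integrability supplied by \eqref{apv}.

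The only difference is packaging. The paper works directly with the process $\langle h(t),g_i\rangle\langle\tilde X_\varepsilon(t),g_j\rangle - C^{ij}_\varepsilon(t)$ in the martingale identity (tacitly using that the cross variation sees only the martingale part of $h$, so one may as well replace $h$ by $\int_0^{\cdot}\mathbb D^s_t h\,\mathrm d\tilde W$), and therefore only has to pass to the limit in the product term and in $C^{ij}_\varepsilon$. Your formulation via the full It\^o product decomposition $N^{ij}_\varepsilon$ is more explicit but forces you to also handle the convergence of the stochastic integrals $\int_0^t\langle h,g_i\rangle\,\mathrm d\langle\tilde X_\varepsilon,g_j\rangle$ and $\int_0^t\langle\tilde X_\varepsilon,g_j\rangle\,\mathrm d\langle h,g_i\rangle$; this is doable but is an extra step the paper's route avoids.
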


\begin{proof}
Following the definition of cross variation between two Hilbert space valued martingales, given by Da Prato $\&$ Zabczyk \cite[Section 3.4]{prato}, we have 
\begin{align*}
\Big<\hspace{-0.14cm}\Big<h(t),  \tilde{X}_\varepsilon(t)  \Big>\hspace{-0.14cm}\Big> =\sum_{i,j}\Big<\hspace{-0.14cm}\Big<\big\langle h(t),g_i\big\rangle, \big\langle \tilde{X}_\varepsilon(t),g_j\big\rangle  \Big>\hspace{-0.14cm}\Big> g_i\otimes g_j, 
\end{align*}
where using the informations of the processes $h(t)$ and $\tilde{X}_\varepsilon(t)$, we have
\begin{align*}
\Big<\hspace{-0.14cm}\Big<\big\langle h(t),g_i\big\rangle, \big\langle \tilde{X}_\varepsilon(t),g_j\big\rangle  \Big>\hspace{-0.14cm}\Big>&=\Big<\hspace{-0.14cm}\Big< \sum_{k\ge 1}\int_0^t\big \langle \mathbb{D}^s_t h(e_k),g_i\big \rangle \D \tilde{W}_k, \sum_{k\ge 1}\int_0^t\big\langle \mathcal{P}_H\textbf{G}_k(\tilde{\vu}_\varepsilon),g_j\big\rangle \D \tilde{W}_k  \Big>\hspace{-0.14cm}\Big> \\
&=\sum_{k\ge\,1}\int_0^t \big\langle \mathbb{D}^s_t h(e_k),g_i\big \rangle\,\big\langle \mathcal{P}_H\textbf{G}_k(\tilde{\vu}_\varepsilon),g_j\big\rangle \, \D s
\end{align*}
Therefore we get
\begin{align*}
\Big<\hspace{-0.14cm}\Big<h(t),  \tilde{X}_\varepsilon(t)  \Big>\hspace{-0.14cm}\Big> 
= \sum_{i,j}\Bigg(\sum_{k = 1}^{\infty}  \int_0^t \big<\mathbb{D}^s_th(e_k), h_i \big> \, \big<\mathcal{P}_H\mathbf{G}_k (\tilde {\bf u}_{\varepsilon}), h_j \big> \,\D s\Bigg) h_i\otimes h_j .
\end{align*}
This equivalently implies that
\begin{align*}
&\tilde \E \Big[ \mathfrak{L}_s(\tilde \Phi_{\varepsilon}) \Big( \big<h(t), g_i \big>\big<\tilde X_{\varepsilon}(t), g_j \big>- \sum_{k = 1}^{\infty} \int_0^t  \big<\mathbb{D}^s_th(e_k), g_i \big> \, \big<\mathcal{P}_H\mathbf{G}_k (\tilde {\bf u}_{\varepsilon}), g_j \big> \,\D s \Big)\Big]=0.
\end{align*}
Since we have $\mathbb{P}$-a.s.
$$
\mathbf{G}_k (\tilde {\bf u}_{\varepsilon}) \rightharpoonup
\big \langle \tilde{\mathcal{V}}_{t,x}^\omega ; \mathbf{G}_k (\tilde {\bfu }) \big \rangle, \,\, \mbox{weakly in} \,\,L^2((0,T);L^2(\T^3)),
$$ 
This gives $\mathbb{P}$-a.s
$$
\mathcal{P}_H\mathbf{G}_k (\tilde {\bf u}_{\varepsilon}) \rightharpoonup
\mathcal{P}_H\big \langle \tilde{\mathcal{V}}_{t,x}^\omega ; \mathbf{G}_k (\tilde {\bfu }) \big \rangle, \,\, \mbox{weakly in} \,\,L^2((0,T);L^2_{\dv}(\T^3)).
$$
Moreover, making use of the a priori estimate \eqref{apv}, we have
\begin{align*}
\tilde \E \Big[\int_0^T \|\mathcal{P}_H \mathcal{G}(\tilde {\bf u}_{\varepsilon}) \|^2_{L_2(\mathcal{U}; H_{\dv}^{-1}(\T^3))}\,dt \Big]\le \tilde \E \Big[\int_0^T \int_{\T^3} (\tilde 1+|\tilde {\bf u}_{\varepsilon}|^2)\,dx\,dt \Big]\le C.
\end{align*}
This implies that $\p$-a.s.
$$
\mathcal{P}_H\mathbf{G}_k (\tilde {\bf u}) \rightharpoonup  \mathcal{P}_H\big \langle \tilde {\mathcal{V}}_{t,x,}^{\omega} ; \mathbf{G}_k (\tilde {\bf u}) \big \rangle, \,\, \mbox{weakly in} \,\,L^2((0,T); H_{\dv}^{-1}(\T^3)).
$$ 
Therefore we can pass to the limit in $\ep \rightarrow 0$, thanks to uniform integrabilty, to conclude
\begin{align*}
\tilde \E \Big[ \mathfrak{L}_s(\tilde \Phi) & \Big( \big<h(t), g_i \big>\big<\tilde X(t), g_j \big>- \sum_{k = 1}^{\infty} \int_0^t  \big<\mathbb{D}^s_th(e_k), g_i \big> \, \big<\mathcal{P}_H\left\langle \mathcal{\tilde V}^{\omega}_{s,x}; \mathbf{G}_k (\tilde {\bf u})\right\rangle, g_j \big> \,\D s \Big)\Big]=0.
\end{align*}
This finishes the proof the lemma.
\end{proof}

Finally, regarding the verifications of \eqref{fourth condition measure-valued solutions}, we have the following lemma:
\begin{Lemma}\label{rhoutight131}
Concentration defect measures $\tilde \mu_{C}$, and $\tilde \mu_{D}$ are dominated by the nonnegative concentration defect measures $\mathcal{\tilde D}(\tau):= \tilde \mu_{E}(\tau)(\T^3)$, in the sense of Lemma~\ref{lemma001}. More precisely, there exists a constant $K>0$ such that
\begin{equation*} 
\int_{0}^{\tau} \int_{\T^3} d|\tilde \mu_C| + \int_{0}^{\tau} \int_{\T^3} d|\tilde \mu_D|\leq K \int_{0}^{\tau} \mathcal{\tilde D}(\tau)\,dt,
\end{equation*}	
$\p$-a.s., for all $\tau \in (0,T)$.
\end{Lemma}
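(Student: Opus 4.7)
The plan is to invoke Lemma~\ref{lemma001} componentwise, using as generating sequence $\{\tilde\vu_\varepsilon\}$ (which generates the Young measure $\mathcal{\tilde V}^{\omega}_{t,x}$ on $\mathcal{D}=\T^3\times(0,T)$) and as nonnegative majorant the function $H^1(\vu)=|\vu|^2$. The hypothesis of Lemma~\ref{lemma001} follows from the a priori bound \eqref{apv}, which, transferred to the new probability space via equality of laws, yields
\[
\sup_{\varepsilon>0}\tilde\E\Big[\bigl\||\tilde\vu_\varepsilon|^2\bigr\|_{L^1(\Pi_T)}^p\Big]<\infty
\]
for every $1\le p<\infty$. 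By construction of $\tilde\mu_E$, the concentration defect measure associated with this choice of $H^1$ coincides with $2\tilde\mu_E$, which is moreover nonnegative by the lower semicontinuity of the $L^2$-norm.

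For $\tilde\mu_C$, I would apply Lemma~\ref{lemma001} componentwise to $H^2_{ij}(\vu)=\vu_i\vu_j$, using the elementary estimate $|\vu_i\vu_j|\le|\vu|^2$ valid for each pair $i,j\in\{1,2,3\}$. The lemma yields $|(\tilde\mu_C)_{ij}|\le 2\tilde\mu_E$ as Borel measures on $\Pi_T$, $\tilde\p$-almost surely. Summing over the nine components gives $|\tilde\mu_C|\le C_1\tilde\mu_E$ for an absolute constant $C_1$ (depending only on the choice of norm for $3\times 3$ matrices).

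For $\tilde\mu_D$, the growth condition \eqref{FG1} yields $\sum_{k\ge 1}|\mathbf G_k(\vu)|^2\le D_0+D_0|\vu|^2$, so one applies Lemma~\ref{lemma001} with $H^1(\vu)=D_0+D_0|\vu|^2$ and $H^2(\vu)=\sum_{k\ge 1}|\mathbf G_k(\vu)|^2$. The constant part $D_0$ contributes no concentration: the constant sequence $D_0\mathbf 1$ converges strongly in $L^1(\Pi_T)$, so its weak-$*$ limit in $\mathcal M_b(\Pi_T)$ equals $D_0\,dx\,dt=\langle\mathcal{\tilde V}^{\omega}_{(\cdot,\cdot)};D_0\rangle\,dx\,dt$. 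Hence the concentration defect of $H^1$ equals that of $D_0|\vu|^2$, namely $2D_0\tilde\mu_E$, and Lemma~\ref{lemma001} delivers $|\tilde\mu_D|\le 2D_0\tilde\mu_E$.

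Combining the two estimates and integrating over $(0,\tau)\times\T^3$ produces the claimed inequality with $K=C_1+2D_0$, since the nonnegativity of $\tilde\mu_E$ allows us to identify
\[
\int_0^\tau\!\!\int_{\T^3}d\tilde\mu_E=\int_0^\tau\tilde\mu_E(t)(\T^3)\,dt=\int_0^\tau\mathcal{\tilde D}(t)\,dt.
\]
The main subtlety is the treatment of the affine majorant in the case of $\tilde\mu_D$: one must argue that the constant summand produces no concentration defect, which is where the strong $L^1(\Pi_T)$-convergence of constant sequences is used in an essential way. Once this point is dispatched, the remainder reduces to a routine componentwise application of Lemma~\ref{lemma001} together with Fubini.
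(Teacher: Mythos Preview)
Your proposal is correct and follows essentially the same approach as the paper: both proofs invoke Lemma~\ref{lemma001} with the energy $|\vu|^2$ as majorant, note that each component $\vu_i\vu_j$ of the convective term is dominated by $|\vu|^2$, and use the growth hypothesis \eqref{FG1} to bound $\sum_{k\ge 1}|\mathbf{G}_k(\vu)|^2$ by $C(1+|\vu|^2)$. Your treatment is in fact more careful than the paper's terse argument, since you explicitly verify that the additive constant in the affine majorant $D_0(1+|\vu|^2)$ contributes no concentration defect---a point the paper leaves implicit.
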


\begin{proof}
Clearly, by Lemma~\ref{lemma001}, we can conclude that $\tilde \mu_{E}$ dominates the defect measure $\tilde \mu_{C} $. On the other hand, making use of hypotheses (\ref{FG1}), (\ref{FG2}), we can show the required dominance of $\tilde \mu_{E}$ over $\tilde \mu_{D}$. Indeed, note that the function
\[
\vc{u} \mapsto \sum_{k \geq 1}{ |{\vc G}_k ({\bf u}) |^2 } \ \mbox{is continuous},
\]
and clearly dominated by the total energy
\[
\sum_{k \geq 1} { |{\vc G}_k ({\bf u}) |^2 }\leq C \left( 1 + {| {\bf u} |^2} \right) 
\]
This finishes the proof of the lemma. 
\end{proof}


\section{Weak-Strong Uniqueness Principle for Euler System}
\label{proof2}
In this section we prove Theorem~\ref{Weak-Strong Uniqueness} through auxiliary results. Essentially the proof relies upon successful identification of the cross variation between two processes, given by a measure-valued Euler solution and a local strong Euler solution. In what follows, we begin with the following lemma.
\begin{Lemma}[Weak It\^o Product Formula]
\label{Ito}
Let $\mathbf{q}$ be a stochastic process on $\big(\Omega,\mathbb{F}, (\mathbb{F}_{t})_{t\geq0},\mathbb{P} \big)$ such that 
$$ \mathbf{q}\in C_w([0,T];L^2_{\dv }(\mathbb{T}^3))\cap L^{\infty}((0,T);L^2_{\dv}(\mathbb{T}^3)), \,\,\mathbb{P}-\text{a.s.} $$
$$\mathbb{E}\Big[\sup_{t\in[0,T]}\|\mathbf{q}\|_{L^2_{\dv}(\mathbb{T}^3)}^2\Big]\,\textless\,+\infty.$$
Moreover, it satisfies $\mathbb{P}$-a.s.
\begin{align}\label{eq1}
\int_{\mathbb{T}^3}\mathbf{q}(t)\cdot\bm{\varphi}\,dx=\int_{\mathbb{T}^3}\textbf{q}(0)\cdot\bm{\varphi}\,dx + \int_{0}^t\int_{\mathbb{T}^3}\mathbf{q}_1:\nabla\bm{\varphi}dx ds+\int_0^t\int_{\mathbb{T}^3}\nabla\bm{\varphi}:\,d\mu(x,s) \,ds + \int_{\mathbb{T}^3}\bm{\varphi}\cdot\int_0^tdM dx
\end{align}
for all $t\in[0,T]$, and test function $\varphi \in C^{\infty}_{\mathrm{div}}(\T^3)$. Here $M$ is a continuous square integrable $H^{-1}_{\dv}(\mathbb{T}^3)$-valued martingale and $\mathbf{q}_1,\,\,\mu $ are progressible measurable with
 $$\mathbf{q}_1\in L^2(\Omega;L^1(0,T;L^2_{\dv}(\mathbb{T}^3))),\qquad \mu\in L^1(\Omega;L_{w*}^\infty(0,T;\mathcal{M}_b(\mathbb{T}^3))).$$
Let $\textbf{Q}$ be a stochastic process on $\big(\Omega,\mathbb{F}, (\mathbb{F}_{t})_{t\geq0},\mathbb{P} \big)$ satisfying
$$\textbf{Q}\in C([0,T]; C^1(\mathbb{T}^3)),\,\mathbb{P}-\text{a.s.}\,\,\text{and}\,\,\,\,\mathbb{E}\big[\sup_{t\in[0,T]}\|\mathbf{Q}\|_{L_{\dv}^2(\mathbb{T}^3)\cap {C}(\mathbb{T}^3)}^2\big]\textless\,\infty,$$
be such that
$$\D\mathbf{Q}={\mathbf{Q}_1}{\rm d}t + \mathbf{Q}_2 \,\D W. $$
Here $\mathbf{Q}_1, \mathbf{Q}_2$ are progressible measurable with
\begin{align*}
\mathbf{Q}_1\in L^2(\Omega;L^1((0,T); & L^2_{\dv}(\mathbb{T}^3))),\,\qquad\,\textbf{Q}_2\in L^2(\Omega;L^2((0,T);L_2(\mathfrak{U};L_{\dv}^2(\mathbb{T}^3)))), \\
&\sum_{k=1}^\infty\int_0^T\|\mathbf{Q}_2(e_k)\|^2_{L^2(\mathbb{T}^3)}\in L^1(\Omega).
\end{align*}
Then $\mathbb{P}$-a.s., for all $t\in[0,T]$,
\begin{align}\label{eq4}
\int_{\mathbb{T}^3}\mathbf{q}(t)\cdot\mathbf{Q}(t)\dx&=\int_{\mathbb{T}^3}\mathbf{q}(0)\cdot\mathbf{Q}(0)\dx + \int_0^t\int_{\mathbb{T}^3} \mathbf{q}_1:\nabla \mathbf{Q} \dx\,{\rm d}s+\int_0^t\int_{\mathbb{T}^3}\nabla\mathbf{Q}:\,\D\mu\,\D s\notag\\
&\qquad+\int_{\mathbb{T}^3} \int_0^t\mathbf{Q}\cdot \D M\,\D x+\int_0^t\int_{\mathbb{T}^3}\mathbf{Q}_1\cdot\mathbf{q}\,\D x\, \D s + \int_{\mathbb{T}^3} \int_0^t\mathbf{q}\cdot\mathbf{Q}_2\,\D W\,\D x\notag\\
&\qquad+\int_{\mathbb{T}^3}\Big<\hspace{-0.14cm}\Big<M(t)\,,\, \mathbf{Q}(t)  \Big>\hspace{-0.14cm}\Big> \,\dx.
\end{align}
\begin{proof}
The proof of this lemma is straightforward. For the sake of completeness, we briefly mention the proof. Note that, in order to prove the claim, we need to compute $\Dif \intTor{ \textbf{q}\cdot {\bf Q} }$. Due to lack of regularity, it is customary to use regularization by convolutions. To that context, let us denote by $(\bm{\rho}_\alpha)$, an approximation to the identity on $\T^3$.
Let $\bm{\varphi}\in L^2_{\text{div}}(\mathbb{T}^3)$, then $\bm{\varphi}_\alpha = \bm{\varphi}*\bm{\rho}_\alpha\,\in C_{\text{div}}^\infty(\mathbb{T}^3)$. Then using $\bm{\varphi}_\alpha$ as a test function in \eqref{eq1}, we obtain $\mathbb{P}$-almost surely, for all $t\in[0,T]$
\begin{align*}
\int_{\mathbb{T}^3}\mathbf{q}_\alpha(t)\cdot\bm{\varphi}\,\D x&=\int_{\mathbb{T}^3}\textbf{q}_\alpha(0)\cdot\bm{\varphi}\,\D x + \int_{0}^t\int_{\mathbb{T}^3}(\mathbf{q}_1)_\alpha:\nabla\bm{\varphi}\, \D x \,\D s+\int_0^t\int_{\mathbb{T}^3}\nabla\bm{\varphi}:\,d\mu_\alpha(x,s) \, \D s \\&\qquad+ \int_{\mathbb{T}^3}\bm{\varphi}\cdot\int_0^tdM_\alpha \, \D x.
\end{align*}
It implies that $\mathbb{P}$-a.s., for all $t\in[0,T]$
\begin{align*}
\mathbf{q}_\alpha(t)&=\textbf{q}_\alpha(0) -\int_0^t\mathcal{P}_H\big(\Div(\mathbf{q}_1)_\alpha \big){\rm d}s-\int_0^t\,\mathcal{P}_H\big(\Div \mu_\alpha\big) \, \D x \, \D s +\int_0^tdM_\alpha \, \D x.
\end{align*}
Now, we can apply It\^o's formula  to the process $t\to  \intTor{ \textbf{q}_r\cdot {\bf Q} }$, then we obtain for all $t\in[0,T],\,\,\mathbb{P}$-a.s
\begin{align*}
\int_{\mathbb{T}^3}\mathbf{q}_\alpha(t)\cdot\mathbf{Q}(t)\dx&=\int_{\mathbb{T}^3}\mathbf{q}_\alpha(0)\cdot\mathbf{Q}(0)\dx + \int_0^t\int_{\mathbb{T}^3} (\mathbf{q}_1)_\alpha:\nabla \mathbf{Q} \dx\,{\rm d}s+\int_0^t\int_{\mathbb{T}^3}\nabla\mathbf{Q}:\,\D\mu_\alpha\,\D s\\
&\qquad+\int_0^t\int_{\mathbb{T}^3}\mathbf{Q}\cdot \D M_\alpha\,\D x+\int_0^t\int_{\mathbb{T}^3}\mathbf{Q}_1\cdot\mathbf{q}_\alpha \,\D x\,\D s + \int_0^t\int_{\mathbb{T}^3}\mathbf{q}_\alpha \cdot\mathbf{Q}_2\,\D W\,\D x\\
&\qquad+\int_{\mathbb{T}^3}\Big<\hspace{-0.14cm}\Big<M_\alpha (t)\,,\, \mathbf{Q(t)}  \Big>\hspace{-0.14cm}\Big> \,\dx\,. 
\end{align*}
By using given hypothese, we can perform the limit $\alpha\to 0$ in above relation to conclude the proof.
\end{proof}
 
\end{Lemma}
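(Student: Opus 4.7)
The plan is to reduce the claim to the classical It\^o product formula in $L^2(\T^3)$ via a standard mollification in the space variable, and then pass to the limit using the regularity hypotheses on both processes.

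First, I would fix a standard mollifier $\bm{\rho}_\alpha$ on $\T^3$ and set $\mathbf{q}_\alpha = \mathbf{q}\ast\bm{\rho}_\alpha$, and analogously $(\mathbf{q}_1)_\alpha$, $\mu_\alpha$, $M_\alpha$. Plugging $\bm{\varphi}\ast\bm{\rho}_\alpha \in C^\infty_{\rm div}(\T^3)$ as a test function into \eqref{eq1} and exploiting that convolution on the torus commutes with the Helmholtz projection $\mathcal{P}_H$, one sees that $\mathbf{q}_\alpha$ is a smooth-in-$x$, $L^2_{\rm div}$-valued It\^o process satisfying pointwise in $x$
$$
\mathbf{q}_\alpha(t) = \mathbf{q}_\alpha(0) - \int_0^t \mathcal{P}_H\,\mathrm{div}\,(\mathbf{q}_1)_\alpha\,\mathrm{d}s - \int_0^t \mathcal{P}_H\,\mathrm{div}\,\mu_\alpha\,\mathrm{d}s + M_\alpha(t),
$$
$\mathbb{P}$-a.s. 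This is a bona fide $L^2$-valued It\^o equation with no further test functions involved.

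Next, I would apply the classical It\^o product formula in $L^2(\T^3)$ to the pair $(\mathbf{q}_\alpha, \mathbf{Q})$. Since $\mathbf{Q}(t,\cdot)$ is divergence-free, I may drop $\mathcal{P}_H$ when pairing against $\mathbf{Q}$, and the hypothesis $\mathbf{Q}\in C([0,T];C^1(\T^3))$ lets me integrate by parts on $\T^3$ to rewrite
$$
-\int_{\T^3} \mathrm{div}\,(\mathbf{q}_1)_\alpha\cdot\mathbf{Q}\,\mathrm{d}x = \int_{\T^3}(\mathbf{q}_1)_\alpha : \nabla\mathbf{Q}\,\mathrm{d}x,\qquad -\int_{\T^3} \mathrm{div}\,\mu_\alpha\cdot\mathbf{Q}\,\mathrm{d}x = \int_{\T^3} \nabla\mathbf{Q}:\mathrm{d}\mu_\alpha.
$$
This yields exactly identity \eqref{eq4} at the mollified level, with the cross variation computed via the Hilbert--Schmidt pairing between the diffusion coefficient of $M_\alpha$ (inherited from that of $M$) and $\mathbf{Q}_2(e_k)$, expanded along the basis $(g_i)$ of $H^{-1}_{\rm div}$.

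Finally, I would pass to the limit $\alpha\to 0$. The deterministic drift integrals converge by dominated convergence from the $L^1_t L^2_x$ bound on $\mathbf{q}_1$ together with continuity of $\nabla\mathbf{Q}$; the stochastic integrals $\int \mathbf{Q}\cdot\mathrm{d}M_\alpha$ and $\int \mathbf{q}_\alpha\cdot\mathbf{Q}_2\,\mathrm{d}W$ converge in $L^2(\Omega)$ by the It\^o isometry using the assumed square integrability of $\mathbf{Q}_2$ and of the quadratic variation of $M$; the cross variation passes to the limit coefficient-wise along $(g_i)$. The main obstacle I anticipate is the measure-valued term $\int \nabla\mathbf{Q}:\mathrm{d}\mu_\alpha\,\mathrm{d}s \to \int \nabla\mathbf{Q}:\mathrm{d}\mu\,\mathrm{d}s$, which forces the hypothesis $\mathbf{Q}\in C([0,T];C^1(\T^3))$ to be used essentially: since $\mu$ is only a bounded Radon measure in $x$ and only $L^\infty_{w^*}$ in $t$, its pairing against the mollified field $(\nabla\mathbf{Q})_\alpha$ converges to the pairing against $\nabla\mathbf{Q}$ precisely because $\nabla\mathbf{Q}$ is \emph{continuous}, via the standard uniform approximation by mollification of continuous functions against bounded measures.
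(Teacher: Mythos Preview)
Your proposal is correct and follows essentially the same route as the paper: mollify in space to turn the weak equation for $\mathbf{q}$ into a strong $L^2_{\rm div}$-valued It\^o equation for $\mathbf{q}_\alpha$, apply the classical It\^o product formula to $\int_{\T^3}\mathbf{q}_\alpha\cdot\mathbf{Q}\,\mathrm{d}x$, and pass to the limit $\alpha\to 0$. The paper's proof is terser on the limit passage (it simply invokes ``the given hypotheses''), whereas you spell out in more detail which assumption drives the convergence of each term---in particular the role of $\mathbf{Q}\in C([0,T];C^1(\T^3))$ for the measure term---but the underlying argument is the same.
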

\subsection{Relative Energy Inequality (Euler System)}
\label{MEI}

\noindent  It is well-known that the relative energy inequality is very useful for the comparison of a measure valued solution and a smooth given function. To see this, let us first introduce the \textit{relative energy (entropy)} functional in the context of measure-valued solutions to the stochastic incompressible Euler system as
\begin{equation}
\notag
\begin{aligned} 
&\mathrm{F}_{\mathrm{mv}}^1 \left(\vu  \ \Big|{\bf U} \right)(t)
:=
\intTor{\langle {\mathcal{V}^{\omega}_{t,x}}; \frac{1}{2} { |\vu |^2}\rangle } + \mathcal{D}(t)
- \intTor{ \big \langle {\mathcal{V}^{\omega}_{t,x}}; \vu  \big \rangle \,\cdot {\bf U}} +\frac{1}{2}  \intTor{  |{\bf U}|^2 } .
\end{aligned}
\end{equation}
In view of the energy inequality \eqref{energy_001}, it is clear that the above relative energy functional is defined for all $t\in[0,T]\setminus \mathcal{N}$, where the null (i.e., Lebesgue measure zero) set $\mathcal{N}$ may depends on $\omega \in \Omega$. We also define relative energy functional for all $t\in \mathcal{N}$ as follows:
\begin{equation}
	\notag
	\begin{aligned} 
		&\mathrm{F}_{\mathrm{mv}}^2 \left(\vu  \ \Big|{\bf U} \right)(t)
		:=
		\lim_{r\to 0^+}\frac{1}{r}\int_t^{t+r}\bigg[\intTor{\langle {\mathcal{V}^{\omega}_{s,x}}; \frac{1}{2} { |\vu |^2}\rangle }+\mathcal{D}(s)\bigg]ds
		- \intTor{ \big \langle {\mathcal{V}^{\omega}_{t,x}}; \vu  \big \rangle \,\cdot {\bf U}} +\frac{1}{2}  \intTor{  |{\bf U}|^2 }.
	\end{aligned}
\end{equation}
Using above, we define relative energy functional, which is well-defined defined for all $t\in[0,T]$, as follows:
\begin{equation}
	\label{rell}
	\begin{aligned} 
		&\mathrm{F}_{\mathrm{mv}} \left(\vu  \ \Big|{\bf U} \right)(t)
		:=\begin{cases}
			\mathrm{F}_{\mathrm{mv}}^1 (\vu  \ \big|{\bf U})(t), &\text{if $t\in[0,T]\setminus \mathcal{N};$}\\
			\mathrm{F}_{\mathrm{mv}}^2 (\vu  \ \big|{\bf U} )(t),&\text{if $t\in \mathcal{N}$.}
		\end{cases}
	\end{aligned}
\end{equation}
To proceed further, we make use of the relative energy \eqref{rell} to derive the relative energy inequality given by \eqref{relativeEntropy}.
\begin{Proposition}[Relative Energy] 
\label{relen}
Let $\big[ \big(\Omega,\mathbb{F}, (\mathbb{F}_{t})_{t\geq0},\mathbb{P} \big); {\mathcal{V}^{\omega}_{t,x}}, W \big]$ be a dissipative measure-valued martingale solution to the system \eqref{P1}.
Suppose $\mathbf{U}$ be stochastic processes which is adapted to the filtration $(\mathbb{F}_t)_{t\geq0}$ and satisfies
\begin{equation*}
\begin{aligned}
\label{operatorBB}
\mathrm{d}\mathbf{U}  &= \mathbf{U}_1\,\mathrm{d}t  + \mathcal{P}_H\mathbf{U}_2\,\mathrm{d}W,
\end{aligned}
\end{equation*}
with
\begin{align}\label{eq:smooth}
\begin{aligned}\vc{U} \in C([0,T]; C_{\dv}^{1}(\T^3)), \ \quad\text{$\mathbb{P}$-a.s.},\qquad
\E\bigg[ \sup_{t \in [0,T] } \| \vc{U} \|_{L_{\dv}^{2}(\tor)}^2\bigg]\textless\,\infty,
\end{aligned}
\end{align}
Moreover, $\vc{U}$ satisfy
\begin{align}\label{new}
&  \vc{U}_1\in L^2(\Omega;L^2(0,T;L_{\dv}^{2}(\mt))),\quad  \vc{U}_2\in L^2(\Omega;L^2(0,T;L_2(\mathfrak U;L^2(\tor)))),
\end{align}
$$\int_0^T\sum_{k\geq 1}\|\mathcal{P}_H\vc{U}_2(e_k)\|_{L^2(\mathbb{T}^3)}^2\in L^1(\Omega).$$
Then the following \emph{relative energy inequality} holds:
\begin{equation}
\begin{aligned}
\label{relativeEntropy}
&\mathrm{F}_{\mathrm{mv}} \left(\vu  \ \Big|  {\bf U} \right)
(t) \leq
\mathrm{F}_{\mathrm{mv}} \left(\vu  \ \Big| {\bf U} \right)(0) +\mathcal{M}_{RE}(t)  + \int_0^t\mathfrak{R}_{\mathrm{mv}} \big(\vu  \left\vert \right.  \mathbf{U}  \big)(s)\,\mathrm{d}s
\end{aligned}
\end{equation}
$\mathbb P$-almost surely, for all $t \in [0,T]$ with
\begin{align}
\label{remainderRE}
\mathfrak{R}_{\mathrm{mv}} \big({\bf u} \left\vert \right.\mathbf{U}  \big) 
&=
 \int_{\mathbb{T}^3} \left\langle {\mathcal{V}^{\omega}_{t,x}}; {{\bf u}\otimes {\bf u} } \right\rangle: \nabla_x \vu  \,\mathrm{d}x+\int_{\mathbb{T}^3}\left\langle {\mathcal{V}^{\omega}_{t,x}};\bf u\right\rangle\cdot {\bf U}_1\, \D x\, \D t -\int_{\T^3} \nabla_x \mathbf{U} : d\mu_C + \frac12\int_{\T^3} d \mu_D \notag \\
 &\qquad+\frac{1}{2}
 \sum_{k\in\mathbb{N}}
 \int_{\mathbb{T}^3} \big \langle {\mathcal{V}^{\omega}_{t,x}}; \big\vert {\mathbf{G}_k({\bf u})} -\mathbf{U}_2(e_k)  \big\vert^2\big\rangle \,\mathrm{d}x.
\end{align}
Here $\mathcal{M}_{RE}(t)$ is a $\R$-valued square integrable martingale, whose norm depends on the norms of smooth function $\mathbf{U}$ in the aforementioned spaces.
\end{Proposition}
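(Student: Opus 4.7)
The plan is to build the relative energy inequality from three ingredients and then reorganize the resulting terms. First, the energy inequality in item (g) of Definition~\ref{def:dissMartin}, together with its Lebesgue-point reformulation \eqref{energy_001}, gives an $\le$-bound on the evolution of $\int_{\T^3}\langle\mathcal{V}^{\omega}_{t,x};\tfrac{1}{2}|\vu|^2\rangle\,\D x + \mathcal{D}(t)$. Second, the weak momentum identity \eqref{second condition measure-valued solution} places the process $t\mapsto \int_{\T^3}\langle\mathcal{V}^{\omega}_{t,x};\vu\rangle\cdot\bm{\varphi}\,\D x$ precisely in the framework of Lemma~\ref{Ito}, with $\mathbf{q}=\langle\mathcal{V};\vu\rangle$, $\mathbf{q}_1=\langle\mathcal{V};\vu\otimes\vu\rangle$, $\mu=\mu_C$, and $M=\mathcal{M}^1_E$; applying that lemma against $\mathbf{Q}=\mathbf{U}$ yields an exact identity for $t\mapsto \int_{\T^3}\langle\mathcal{V};\vu\rangle\cdot\mathbf{U}\,\D x$. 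Third, a classical It\^o formula for $\tfrac{1}{2}\|\mathbf{U}\|_{L^2(\T^3)}^2$ provides the evolution of the remaining piece of $\mathrm{F}_{\mathrm{mv}}$.

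Summing the three expressions with the appropriate signs produces deterministic drifts, several stochastic integrals, and a cross-variation contribution $\int_{\T^3}\langle\!\langle \mathcal{M}^1_E,\mathbf{U}\rangle\!\rangle\,\D x$. For the latter I would invoke item (i) of Definition~\ref{def:dissMartin}, applied to $h=\mathbf{U}$ so that $\mathbb{D}^s_t h=\mathcal{P}_H\mathbf{U}_2$, to express the bracket as $\sum_k \int_0^t \int_{\T^3} \mathcal{P}_H\langle\mathcal{V};\mathbf{G}_k(\vu)\rangle\cdot\mathcal{P}_H\mathbf{U}_2(e_k)\,\D x\,\D s$. The remaining stochastic integrals $\int_0^t dM^2_E$, $-\int_0^t\int_{\T^3}\mathbf{U}\cdot dM^1_E$ and the It\^o terms coming from the $\mathcal{P}_H\mathbf{U}_2\,\D W$ piece combine into a single $\R$-valued square-integrable martingale $\mathcal{M}_{RE}(t)$, whose second moment is controlled by \eqref{FG1}--\eqref{FG2} together with \eqref{eq:smooth}--\eqref{new}.

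The heart of the argument is a ``completion of the square'' in the noise terms: the three ingredients together contribute
\begin{align*}
& \tfrac{1}{2}\sum_k \int_{\T^3}\langle\mathcal{V};|\mathbf{G}_k(\vu)|^2\rangle\,\D x - \tfrac{1}{2}\sum_k \int_{\T^3}|\mathcal{Q}_H\langle\mathcal{V};\mathbf{G}_k(\vu)\rangle|^2\,\D x \\
&\qquad -\sum_k\int_{\T^3}\mathcal{P}_H\langle\mathcal{V};\mathbf{G}_k(\vu)\rangle\cdot\mathcal{P}_H\mathbf{U}_2(e_k)\,\D x + \tfrac{1}{2}\sum_k\int_{\T^3}|\mathcal{P}_H\mathbf{U}_2(e_k)|^2\,\D x.
\end{align*}
Orthogonality of the Helmholtz decomposition, applied to both $\langle\mathcal{V};\mathbf{G}_k(\vu)\rangle$ and $\mathbf{U}_2(e_k)$, allows me to bound this sum above by $\tfrac{1}{2}\sum_k \int_{\T^3}\langle\mathcal{V};|\mathbf{G}_k(\vu)-\mathbf{U}_2(e_k)|^2\rangle\,\D x$, with the gap equal to the non-negative quantity $\tfrac{1}{2}\sum_k\int_{\T^3}|\mathcal{Q}_H\langle\mathcal{V};\mathbf{G}_k(\vu)\rangle-\mathcal{Q}_H\mathbf{U}_2(e_k)|^2\,\D x$, which is simply dropped. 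This delivers precisely the noise contribution in $\mathfrak{R}_{\mathrm{mv}}$; the convective term and the $\mathbf{U}_1$-drift, together with the defect measures $\mu_C,\mu_D$, produce the deterministic part.

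I anticipate two main obstacles. First, the energy bound in (g) is formulated through the essential one-sided limits $\mathrm{E}(t\pm)$ and is valid only for a.e.\ pair $(s,t)$, whereas the two It\^o identities hold pointwise in $t$. Aligning the three pieces at a common time $t$ therefore requires a careful time mollification argument in the spirit of the one used to pass from \eqref{tr} to \eqref{ps}, and a direct inspection of the null set in the definition \eqref{rell} of $\mathrm{F}_{\mathrm{mv}}$. Second, Lemma~\ref{Ito} demands specific integrability on $\mathbf{q}_1$ and $\mu$, and the Young measure only provides first-moment control on $\vu$; I would handle this by exploiting that $\mathbf{U}\in C^1_{\dv}$, so that $\langle\mathcal{V};\vu\otimes\vu\rangle$ is tested only against $\nabla\mathbf{U}$ and $L^\infty_t L^1_x$-control of the former is already furnished by the energy inequality together with Lemma~\ref{lemma001}.
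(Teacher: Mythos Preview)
Your proposal is correct and follows essentially the same route as the paper: the paper also splits $\mathrm{F}_{\mathrm{mv}}$ into the three pieces you describe, applies Lemma~\ref{Ito} with $\mathbf{q}=\langle\mathcal{V};\vu\rangle$ and $\mathbf{Q}=\mathbf{U}$, uses item~(i) of Definition~\ref{def:dissMartin} to identify the cross variation, and then performs precisely your Helmholtz ``completion of the square'' (this is the paper's display \eqref{rt}), dropping the non-negative $\mathcal{Q}_H$-difference term. Your anticipated obstacle about aligning the a.e.\ energy inequality with the pointwise It\^o identities is handled in the paper exactly as you suggest, through the one-sided limit definition \eqref{rell} of $\mathrm{F}_{\mathrm{mv}}$.
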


\begin{proof}
We follow the usual strategy and express all the integrals on the right hand side of \eqref{rell}
by making use of the energy inequality \eqref{third condition measure-valued solution} and the field equation \eqref{second condition measure-valued solution}. Therefore, we shall 
make use of It\^o's formula and the energy inequality (\ref{third condition measure-valued solution}) to compute the right hand side of \eqref{rell}.

\medskip 

\noindent {\bf Step 1:} 
In order to compute $\Dif \intTor{ \big \langle {\mathcal{V}^{\omega}_{t,x}}; \vu  \big \rangle \,\cdot {\bf U} }$, we first recall that $\mathbf{q}=\langle {\mathcal{V}^{\omega}_{t,x}}; \vu  \big \rangle $ satisfies hypothesis of Lemma \ref{Ito}. Therefore we can apply the Lemma \ref{Ito} to conclude that $\mathbb{P}$-almost surely
\begin{equation} \label{I1}
\begin{split}
& \Dif \left( \intTor{ \big \langle {\mathcal{V}^{\omega}_{t,x}}; \vu  \big \rangle \cdot {\bf U} } \right) = \intTor{ \left[ \big \langle {\mathcal{V}^{\omega}_{t,x}}; \vu  \big \rangle \cdot {\bf U}_1
+ \left\langle {\mathcal{V}^{\omega}_{t,x}}; {\vu \otimes \vu }\right\rangle: \nabla_x \vu   \right] }  {\rm d}t \\
& \qquad +  \sum_{k\geq1}\intTor{ \mathcal{P}_H{\bf U}_2(e_k) \cdot\mathcal{P}_H \big \langle {\mathcal{V}^{\omega}_{t,x}}; \vc{G}_k (\vu ) \big\rangle}\, {\rm d}t + \int_{\T^3}  \nabla_x {\bf U}: d\mu_C \,dt +  \Dif \mathcal{M}_1,
\end{split}
\end{equation}
where the square integrable martingale $\mathcal{M}_1(t)$ is given by
\[
\mathcal{M}_1(t) = \int_{\T^3} \int_0^t  \vc{U} \, dM^1_{E} \,dx+ \int_0^t \intTor{ \big \langle {\mathcal{V}^{\omega}_{t,x}}; \vu  \big \rangle \cdot \mathcal{P}_H\vc{U}_2 } \,\Dif W
\]
We remark that the item (i) of the Definition \ref{def:dissMartin} is used to identify the cross variation in \eqref{I1}. Indeed, notice that
\begin{align*}
\int_{\mathbb{T}^3}\Big<\hspace{-0.14cm}\Big<f(t),  M^1_{E}(t)  \Big>\hspace{-0.14cm}\Big> dx
&=\int_{\mathbb{T}^3} \sum_{i,j}\Bigg(\sum_{k = 1}^{\infty}  \int_0^t\langle \mathcal{P}_H \left\langle \mathcal{V}^{\omega}_{s,x}; \mathbf{G}_k (\vu )\right\rangle,h_i\rangle \,\langle\mathbb{D}^s f(e_k), h_j\rangle \,ds\Bigg) h_i\otimes h_j \D x.\\&
=\sum_{k\ge\,1}\int_{\T^3}\int_0^t\mathcal{P}_H{\bf U}_2(e_k) \cdot\mathcal{P}_H \big \langle {\mathcal{V}^{\omega}_{t,x}}; \vc{G}_k (\varrho,\vu ) \big\rangle\,\D x {\rm d}t
\end{align*}
\noindent
{\bf Step 2:}
Next, we see that 
\begin{equation} \label{I21}
\begin{split}
\Dif \left( \intTor{ \frac{1}{2} |\vc{U}|^2 } \right) &= \frac{1}{2} \sum_{k\geq1}\intTor{|\mathcal{P}_H {\bf U}_2(e_k)|^2 } \ {\rm d}t + {\rm d}\mathcal{M}_2,
\end{split}
\end{equation}
where
\[
\mathcal{M}_2(t) = \int_0^t \intTor{  \vc{U} \cdot\mathcal{P}_H\vc{U}_2 } \, \Dif W.
\]
\noindent 
{\bf Step 3:}
We have from energy inequality
\medskip
\begin{align}
\label{ph}
\mathrm{E}(t+)\, \leq\, &\mathrm{E}(s-)+ \frac{1}{2} \sum_{k\ge\,1}\int_s^{t} \int_{\T^3} \left\langle \mathcal{V}^{\omega}_{s,x};{|{\tn G_k( u)}|^2} \right\rangle \dx{\rm d}\tau-\frac{1}{2}\sum_{k\ge\,1}\int_s^t\int_{\mathbb{T}^3}\Big(\mathcal{Q}_H\big\langle \mathcal{V}^{\omega}_{s,x};{|{\tn G_k( u)}|}\big\rangle\Big)^2\dx{\rm d}\tau \notag \\&\qquad+ \frac12\int_s^{t} \int_{\T^3} d \mu_D + \int_s^{\tau}  d\mathcal{M}^2_{E}. 			
\end{align}
We now manipulate the product term in the equality \eqref{I1} using properties of projections $\mathcal{P}_H$ and $ \mathcal{Q}_H$. Indeed, note that
\begin{align*}
	&\intTor{ \mathcal{P}_H{\bf U}_2(e_k) \cdot\mathcal{P}_H \big \langle {\mathcal{V}^{\omega}_{t,x}}; \vc{G}_k (\vu ) \big\rangle}\\
	&\qquad =\intTor{ {\bf U}_2(e_k) \cdot\big \langle {\mathcal{V}^{\omega}_{t,x}}; \vc{G}_k (\vu ) \big\rangle}-\intTor{ \mathcal{Q}_H{\bf U}_2(e_k) \cdot\mathcal{Q}_H \big \langle {\mathcal{V}^{\omega}_{t,x}}; \vc{G}_k (\vu ) \big\rangle},
\end{align*}
and
\begin{align*}
\intTor{|\mathcal{P}_H {\bf U}_2(e_k)|^2 }=\intTor{|{\bf U}_2(e_k)|^2 }-\intTor{|\mathcal{Q}_H {\bf U}_2(e_k)|^2 }.
\end{align*}
These properties of projections imply that
\begin{align}\label{rt}
	\begin{aligned}
	&\frac{1}{2} \sum_{k\geq1}\intTor{|\mathcal{P}_H {\bf U}_2(e_k)|^2 } -\sum_{k\geq1}\intTor{ \mathcal{P}_H{\bf U}_2(e_k) \cdot\mathcal{P}_H \big \langle {\mathcal{V}^{\omega}_{t,x}}; \vc{G}_k (\vu ) \big\rangle}\\&\qquad\qquad+ \frac{1}{2} \sum_{k\ge\,1}\int_s^{t} \int_{\T^3} \left\langle \mathcal{V}^{\omega}_{s,x};{|{\vc G_k( u)}|^2} \right\rangle \dx\,{\rm d}\tau-\frac{1}{2}\sum_{k\ge\,1}\int_s^t\int_{\mathbb{T}^3}\Big(\mathcal{Q}_H\big\langle \mathcal{V}^{\omega}_{s,x};{|{\vc G_k( u)}|}\big\rangle\Big)^2\dx\,{\rm d}\tau\\&=\frac{1}{2}\sum_{k\ge\,1}\int_{\mathbb{T}^3} \Big \langle {\mathcal{V}^{\omega}_{t,x}}; \big\vert {\mathbf{G}_k({\bf u})} -\mathbf{U}_2(e_k)  \big\vert^2\Big\rangle \,\mathrm{d}x-\frac{1}{2}\sum_{k\ge\,1}\int_{\mathbb{T}^3}\Big|\mathcal{Q}_H\mathbf{U}_2(e_k)-\mathcal{Q}_H \big \langle {\mathcal{V}^{\omega}_{t,x}}; \vc{G}_k (\vu ) \big\rangle\Big|^2\,\dx\\
	&\le\,\frac{1}{2}\sum_{k\ge\,1}\int_{\mathbb{T}^3} \Big \langle {\mathcal{V}^{\omega}_{t,x}}; \big\vert {\mathbf{G}_k({\bf u})} -\mathbf{U}_2(e_k)  \big\vert^2\Big\rangle \,\mathrm{d}x.
	\end{aligned}
\end{align}
Finally, in view of the above observations given by (\ref{I1})-(\ref{rt}), we can now add the resulting expressions to establish (\ref{relativeEntropy}). Note that the square integrable martingale $\mathcal{M}_{RE}(t)$ is given by $\mathcal{M}_{RE}(t):= \mathcal{M}_1(t) + \mathcal{M}_2(t)+\mathcal{M}_E^2 (t)$.
\end{proof}

\subsection{Proof of Theorem~\ref{Weak-Strong Uniqueness}}
\label{sub01}
In this subsection, we aim at establishing the desired weak (measure-valued)--strong uniqueness principle given by Theorem~\ref{Weak-Strong Uniqueness}. To do so, we need to apply the relative energy inequality \eqref{third condition measure-valued solution} with a specific choice of the smooth given function $\mathbf U=\bar{\bfu}(\cdot\wedge \tau_L)$, where $(\bar{\mathbf{u}},(\tau_L)_{L\in\mathbb{N}},\tau)$ is the unique maximal strong pathwise solution to \eqref{P1}. For technical reason, note that the stopping time $\tau_L$ announces the blow-up and satisfies
\begin{equation*}
\sup_{t\in[0,\tau_L]}\|\bar{\vu}(t)\|_{1,\infty}\geq L\quad \text{on}\quad [\mathfrak{t}<T] ;
\end{equation*}
Furthermore, it is evident that $\bar{\bfu}$ satisfies the equation \eqref{operatorBB}, with
\begin{align*}
\mathbf{U}_1  = \mathcal{P}_H(\bar{\mathbf{u}}\cdot\nabla_x \bar{\mathbf{u}}),
\quad    
\mathbf{U}_2  =\tn{G} (\bar{{\bf u}}).
\end{align*}
Clearly, in view of Theorem \ref{existence of strong solution} and  \eqref{FG1}--\eqref{FG2}, the conditions \eqref{eq:smooth} and \eqref{new} are satisfied for $t\leq \mathfrak t_L$. 
Therefore, the inequality \eqref{relativeEntropy} holds, and we can also deduce from \eqref{remainderRE} that
\begin{equation}
\begin{aligned}
\label{relativeEntropy1}
&\mathrm{F}_{\mathrm{mv}}\left(\vu \Big| \bar{\mathbf{u}}\right)  
(t \wedge \tau_L) \leq
\mathrm{F}_{\mathrm{mv}} \left(\vu  \Big|\bar{\mathbf{u}}\right)(0) + M_{RE}(t \wedge \tau_L)  + \int_0^{t \wedge \tau_L} \mathfrak{R}_{\mathrm{mv}} \big({\bf u} \left\vert \right. \bar{\mathbf{u}}  \big) (s)\,\mathrm{d}s,
\end{aligned}
\end{equation}
holds for each $L\in\mathbb{N}$, for all $t\in[0,T]$, $\mathbb{P}$-almost surely. Here after manipulating terms in \eqref{remainderRE}, as in \cite{emil}, we obtain
\begin{equation}
\begin{aligned}
\notag
\mathfrak{R}_{\mathrm{mv}}\left({\bf u}  \Big|\bar{\mathbf{u}}\right) 
=&\,  \int_{\mathbb{T}^3} \left\langle {\mathcal{V}^{\omega}_{t,x}}; \left|{(\vu - \bar{\mathbf{u}})\otimes ( \bar{\mathbf{u}}-\vu ) }\right| \right\rangle |\nabla_x \bar{\mathbf{u}}| \,dx +
\frac{1}{2}
\sum_{k\in\mathbb{N}}
\int_{\mathbb{T}^3} \big \langle {\mathcal{V}^{\omega}_{t,x}}; \big\vert {\mathbf{G}_k(\mathbf{u})}  -{\mathbf{G}_k(\bar{\mathbf{u}})} \big\vert^2 \big \rangle \,\mathrm{d}x \\
&\qquad+  \int_{\T^3} |\nabla_x \bar{\mathbf{u}}|\cdot d|\mu_C|+  \int_{\T^3} d|\mu_D|.
\end{aligned}
\end{equation}
Since $\|\mathbf{\bar{u}}\|_{W^{1,\infty}(\mathbb{T}^3)}\le\,c(L)$ for $t\,\le\,\tau_L$, we can control the terms $|\nabla_x \bar{\mathbf{u}}|$ by some constant. It is also clear that
\begin{equation*}
\left| {(\vu - \bar{\mathbf{u}}) \otimes ( \bar{\mathbf{u}}-\vu )} \right| \leq |\vu -\bar{\mathbf{u}}|^2,
\end{equation*}
and
\begin{equation}
\begin{aligned}
\notag
\sum_{k \ge 1}\big\vert {\mathbf{G}_k(\mathbf{u})}  -{\mathbf{G}_k(\bar{\mathbf{u}})} \big\vert^2\leq D_1|\mathbf{u}-\bar{\mathbf{u}}|^2.
\end{aligned}
\end{equation}
Finally, we also see that
\begin{align*}
\frac{1}{2}
\sum_{k\in\mathbb{N}}
\int_{\mathbb{T}^3} \big \langle {\mathcal{V}^{\omega}_{t,x}}; \big\vert {\mathbf{G}_k(\mathbf{u})}  -{\mathbf{G}_k(\bar{\mathbf{u}})} \big\vert^2 \big \rangle \,\mathrm{d}x
\leq c(L)\,
\mathrm{F}^1_{\mathrm{mv}} \big( {\mathbf{u}}\left\vert \right. \bar{\mathbf{u}}  \big).
\end{align*}
Collecting all the above estimates and using the item $(h)$ of Definition \ref{def:dissMartin}, we conclude that for all $t\in[0,T]$, $\mathbb{P}$-a.s.
\begin{equation}
\begin{aligned}
\label{r4est}
\int_0^{t \wedge \tau_L}
\mathfrak{R}_{\mathrm{mv}} \big({\bf u} \left\vert \right.\bar{\mathbf{u}}  \big)  \,\mathrm{d}s
\leq c(L)\,\int_0^{t \wedge \tau_L}
\Big(\mathrm{F}_{\mathrm{mv}} \big({\bf u} \left\vert \right. \bar{\mathbf{u}}  \big) 
(s)\Big)\,\mathrm{d}s .
\end{aligned}
\end{equation}
We now combine \eqref{r4est} and \eqref{relativeEntropy1}, and apply classical Gronwall's lemma, to obtain for all $t\in [0,T]$
\begin{align}
\nonumber
\mathbb{E}\,  \Big[\mathrm{F}_{\mathrm{mv}} \big({\mathbf{u}}\left\vert \right. \bar{\mathbf{u}}  \big)  
(t \wedge \tau_L)\Big] 
\leq c(L)\,
\mathbb{E}\,\Big[\mathrm{F}_{\mathrm{mv}} \big({\mathbf{u}}\left\vert \right. \bar{\mathbf{u}}  \big)(0)\Big].
\end{align}
We recall that
\begin{equation*}
\begin{aligned}
\mathrm{F}_{\mathrm{mv}} \big(\varrho ,{\mathbf{m}}\left\vert \right. \bar{\varrho}, \bar{\mathbf{u}}  \big)  (0) 
&=
\int_{\mathbb{T}^3} \Big \langle {\mathcal{V}^{\omega}_{0,x}}; \frac{1}{2}\varrho_{0}\big\vert \mathbf{u}_{0} - \bar{\mathbf{u}}_{0} \big\vert^2 \Big \rangle \,\mathrm{d}x,
\end{aligned}
\end{equation*}
which, by assumption, vanishes in expectation. Therefore, we conclude that
$$
\mathbb{E}\,  \Big[\mathrm{F}_{\mathrm{mv}} \big({\mathbf{u}}\left\vert \right. \bar{\mathbf{u}}  \big)  
(t \wedge \tau_L)\Big]=0,\,\,\text{for all}\,\,\,t\in[0,T].
$$
This also implies that 
$$
\lim_{r\to 0^+}\frac{1}{r}\int_{t}^{t+r}\mathbb{E}\,  \Big[\mathrm{F}_{\mathrm{mv}} \big({\mathbf{u}}\left\vert \right. \bar{\mathbf{u}}  \big)  
(s \wedge \tau_L)\Big]\, \D s=0.
$$
Keeping in mind a priori estimates, a standard Lebesgue point argument in combination with classical Fubini's theorem reveals that for a.e. $t\in[0,T]$,
$$\mathbb{E}\,  \Big[\mathrm{F}_{\mathrm{mv}}^1 \big({\mathbf{u}}\left\vert \right. \bar{\mathbf{u}}  \big)  
(t \wedge \tau_L)\Big]=0.$$
But since the defect measure $\mathcal{D} \geq 0$, above equality implies for a.e. $t\in[0,T]$, $\mathbb{P}$-almost surely
 $$\mathcal{D}(t\wedge \tau_L)=0, \,\,\text{and}\,\,\, \bar{{\bf u}}(x,t\wedge\tau_L)={\bf u}(x, t\wedge\tau_L),\,\,\text{ for a.e.}\,\,x\in \mathbb{T}^3.
 $$


%


 \section{Weak-Strong Uniqueness for Navier--Stokes System} 
\label{NSE}

Let $\bf u$ and $\bf U$ be two finite energy weak martingale solutions to \eqref{P1NS}, with same inital data $u_0$, defined on the same stochastic basis. The commonly used form of the \textit{relative energy} functional in the context of weak  solutions to the incompressible Naiver stokes system reads
\begin{equation}
\label{rell1}
\begin{aligned} 
\mathrm{F}^{\mathrm{NS}}_{\mathrm{mv}} \left(\vu  \, \Big|\,{\bf U} \right):=
\intTor{\frac{1}{2} { |\vu |^2} }- \intTor{ \vu   \,\cdot {\bf U}} +\frac{1}{2}  \intTor{  |{\bf U}|^2 }.
\end{aligned}
\end{equation}
The proof of (weak-strong) uniqueness for finite energy weak martingale solutions to \eqref{P1NS} essentially uses similar arguments, as depicted in Section~\ref{proof2}. However, the main difficulty lies in the successful identification of cross variation of two martingale solutions. Indeed, the regularity of finite energy weak martinagle solutions is not enough to identify the cross variation between them, and requires one solution to be more regular. In what follows, we start with the following lemma, whose proof is a simple consequence of the H\"older and Sobolev inequalities, see \cite[Lemma 2.4]{Masuda}.
\begin{Lemma}\label{inequality1}
Let $r,s$ satisfy
$$\frac{3}{s}+\frac{2}{r}=1,\,\,\,\,s\in(3,\infty)$$
and let ${\bf u,w}\in L^2((0,T);H^1_{\dv}(\mathbb{T}^3))$, and $\vu \in L^r(0,T;L^s(\mathbb{T}^3))$. Then
\begin{align*}
\bigg|\int_0^T\langle \textbf{v}\cdot\nabla  \textbf{w,u}\rangle\bigg|\le\,C\bigg(\int_0^T\|\nabla \textbf{w}\|&_{L^2(\mathbb{T}^3)}^2dt\bigg)^{\frac12}\bigg(\int_0^T\|\nabla \textbf{v}\|_{L^2(\mathbb{T}^3)}^2\bigg)^{\frac{3}{2s}} \bigg(\int_0^T\|\vu \|_{L^s(\mathbb{T}^3)}^r\|\textbf{v}\|_{L^2(\mathbb{T}^3)}^2dt\bigg)^{\frac1r}.
\end{align*}
\end{Lemma}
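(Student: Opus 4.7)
The statement is a classical Ladyzhenskaya--Serrin type trilinear estimate, so my plan is purely deterministic: a chain of H\"older inequality in space, Gagliardo--Nirenberg interpolation for the middle factor, and H\"older inequality in time, with the scaling condition $\tfrac{3}{s}+\tfrac{2}{r}=1$ chosen precisely so that all exponents match.

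First, for fixed $t$, I would apply H\"older in space to the integrand $(\mathbf v\cdot\nabla\mathbf w)\cdot\mathbf u$ using the triple of exponents $(s,\,2,\,p)$ with $\tfrac1s+\tfrac12+\tfrac1p=1$, i.e.\ $p=\tfrac{2s}{s-2}\in(2,6)$ for $s\in(3,\infty)$. This gives
\begin{equation*}
\bigl|\langle\mathbf v\cdot\nabla\mathbf w,\mathbf u\rangle\bigr|
\,\leq\, \|\mathbf u\|_{L^s}\,\|\nabla\mathbf w\|_{L^2}\,\|\mathbf v\|_{L^p}.
\end{equation*}
Next I would interpolate the $L^p$-norm of $\mathbf v$ between $L^2$ and $L^6$ via Gagliardo--Nirenberg (using the Sobolev embedding $H^1(\mathbb{T}^3)\hookrightarrow L^6(\mathbb{T}^3)$). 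Solving $\tfrac{1}{p}=\tfrac{1-\theta}{2}+\tfrac{\theta}{6}$ with $p=\tfrac{2s}{s-2}$ yields $\theta=\tfrac{3}{s}$, so
\begin{equation*}
\|\mathbf v\|_{L^p}\,\leq\, C\,\|\mathbf v\|_{L^2}^{\,1-3/s}\,\|\nabla\mathbf v\|_{L^2}^{\,3/s}.
\end{equation*}
Combining these two bounds produces the pointwise-in-$t$ estimate
\begin{equation*}
\bigl|\langle\mathbf v\cdot\nabla\mathbf w,\mathbf u\rangle\bigr|
\,\leq\, C\,\|\nabla\mathbf w\|_{L^2}\,\|\nabla\mathbf v\|_{L^2}^{\,3/s}\,\bigl(\|\mathbf u\|_{L^s}\|\mathbf v\|_{L^2}^{\,1-3/s}\bigr).
\end{equation*}

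Now I would integrate in time and apply H\"older with three exponents $(q_1,q_2,q_3)$ satisfying $\tfrac{1}{q_1}+\tfrac{1}{q_2}+\tfrac{1}{q_3}=1$. The natural choice, dictated by the first two factors, is $q_1=2$ (pairing with $\|\nabla\mathbf w\|_{L^2}\in L^2_t$) and $q_2=\tfrac{2s}{3}$ (so that $(\|\nabla\mathbf v\|_{L^2}^{3/s})^{q_2}=\|\nabla\mathbf v\|_{L^2}^{2}$). The remaining exponent is then forced to be $q_3=\tfrac{2s}{s-3}=r$ by the scaling assumption $\tfrac{3}{s}+\tfrac{2}{r}=1$. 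The key consistency check, which is where the hypothesis pays off, is that $(1-\tfrac{3}{s})\,r=\tfrac{s-3}{s}\cdot\tfrac{2s}{s-3}=2$, so the third factor becomes exactly $\bigl(\int_0^T \|\mathbf u\|_{L^s}^r\|\mathbf v\|_{L^2}^2\,dt\bigr)^{1/r}$, matching the right-hand side of the lemma.

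The only real subtlety is verifying that the three exponents in the last H\"older step are admissible (i.e.\ all $\geq 1$) in the full range $s\in(3,\infty)$; for $s>3$ one has $r=\tfrac{2s}{s-3}\in(2,\infty)$ and $q_2=\tfrac{2s}{3}>2$, so everything is fine. The endpoints $s=3$ (excluded) and $s=\infty$ are the expected limiting cases. No further machinery beyond Sobolev embedding and two applications of H\"older is needed.
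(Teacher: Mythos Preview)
Your proposal is correct and is precisely the approach the paper indicates: the paper does not give a detailed proof but only remarks that the lemma ``is a simple consequence of the H\"older and Sobolev inequalities'' and refers to \cite[Lemma~2.4]{Masuda}. Your chain of pointwise H\"older (with exponents $(s,2,\tfrac{2s}{s-2})$), Gagliardo--Nirenberg interpolation with $\theta=3/s$, and temporal H\"older (with exponents $(2,\tfrac{2s}{3},r)$) is exactly the standard Ladyzhenskaya--Serrin argument behind Masuda's lemma, so there is nothing to compare.
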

To make use of the above inequality, let us assume that two finite energy weak martingale solutions
${\bf u,U}\,\in L^{\infty}((0,T);L^2(\mathbb{T}^3))\cap L^2((0,T);H^1(\mathbb{T}^3))$ be diveregence free. Assume, in addition, that ${\bf U}\in L^r((0,T);L^s(\mathbb{T}^3)).$ Then, in view of the above Lemma~\ref{inequality1}, for every $\tau\in(0,T)$
\begin{align}\label{inequality}
\bigg|\int_0^{\tau}\int_{\mathbb{T}^3}& \nabla{\bf(u-U):((u-U)}\otimes{\bf U}) \, \D x \, \D  t\bigg|\\\notag&\le\,C\bigg(\int_0^{\tau}\int_{\mathbb{T}^3}|\nabla{\bf(u-U)}|^2\, \D x\bigg)^{1-1/r}\bigg(\int_0^\tau\|{\bf U}\|_{L^s(\mathbb{T}^3))}^r\int_{\mathbb{T}^3}{\bf|u-U|^2}\,\D x\, \D t\bigg)^{1/r}.
\end{align}
In order to calculate the evolution equation satisfies by the second term of the relative energy \eqref{rell1}, we need to apply It\^o product rule. To do so, we first regularize \eqref{e1q:energy1} (with $\varepsilon =1$), for both solutions $\vu$ and $\bf U$, by taking a spatial convolution with a suitable family of regularizing kernels. We denote by ${\bf v}_r$, the regularization of $\bf v$. For a test fuction $\bm{\varphi}\in L^2_{\text{div}}(\mathbb{T}^3)$, we have $\bm{\varphi_r}\in C_{\text{div}}^{\infty}(\mathbb{T}^3)$. For both weak solutions $\bf u$ and $\bf U$, we may write equations
\begin{align*}
\langle \mathbf{u}(t), \bm\varphi_r\rangle & = \langle \mathbf{u} (0), \bm{\varphi_r}\rangle +\int_0^{t}\langle \mathbf{u}\otimes\mathbf{u}(s),\nabla_x\bm{\varphi_r}\rangle\,\mathrm{d}s
-\int_0^{t} \langle \nabla_x \mathbf{u}_(s)\,  \nabla_x\bm{\varphi_r}\rangle\mathrm{d}s 
+\int_0^{t}\langle\mathcal{G}(\mathbf{u}), \bm{\varphi_r}\rangle\,\mathrm{d}W,\\
\langle \mathbf{U}(t), \bm\varphi_r\rangle & = \langle \mathbf{U} (0), \bm{\varphi_r}\rangle +\int_0^{t}\langle \mathbf{U}\otimes\mathbf{U}(s),\nabla_x\bm{\varphi_r}\rangle\,\mathrm{d}s
-\int_0^{t} \langle \nabla_x \mathbf{U}(s)\,,  \nabla_x\bm{\varphi_r}\rangle\mathrm{d}s 
+\int_0^{t}\langle\mathcal{G}(\mathbf{U}), \bm{\varphi_r}\rangle\,\mathrm{d}W.
\end{align*}
After shiftting regularizing kernel from test fucntion to solutions term, we obtain
\begin{align*}
\mathbf{u}_r(t)&=\mathbf{u}_r (0) -\int_0^{t}\mathcal{P}_H( \Div(\mathbf{u}\otimes\mathbf{u})_r)\mathrm{d}s
+\int_0^{t}\Delta \mathbf{u}_r(s)\mathrm{d}s	+\int_0^{t}\mathcal{P}_H(\mathcal{G}(\mathbf{u})_r)\mathrm{d}W,\\
\mathbf{U}_r(t)&=\mathbf{U}_r (0) -\int_0^{t}\mathcal{P}_H( \Div(\mathbf{U}\otimes\mathbf{U})_r)\mathrm{d}s
+\int_0^{t}\Delta \mathbf{U}_r(s)\mathrm{d}s	+\int_0^{t}\mathcal{P}_H(\mathcal{G}(\mathbf{U})_r)\mathrm{d}W,
\end{align*}
in $(L^2_{\mathrm{div}}(\mathbb{T}^3))'.$
We can now apply classical It\^o's formula to the process 
$t\,\to\,\int_{\mathbb{T}^3}\vu_r \cdot\bf{U}_r \,\D x$, to obtain $\p$-almost surely
\begin{equation} \label{r}
\begin{split}
\Dif \left( \intTor{ {\vu _r  \cdot {\bf {U}}_r }} \right) &= \intTor{ \Big[   -{{\vu }_r  \cdot \Div({\bf U\otimes \bf U})}_r+  ({{\vu \otimes \vu }})_r: \nabla_x {\vu }_r  \Big] } \, {\rm d}t \\
& \quad +  \sum_{k\geq1}\intTor{\mathcal{P}_H({\tn G}_k({\bf U})_r) \cdot \mathcal{P}_H(\vc{G}_k ({\vu )_r)} }\, {\rm d}t -2\int_{\T^3}  \nabla_x {\bf U}_r: \nabla_x {\bf u}_r \, \D x \, \D t\\
&\qquad +\int_0^t \intTor{ ({\vu }_r  \cdot {\tn G(\bf U)}_r + {\bf U} _r\cdot{\tn{G}}({\bf{u}}_r) )}\,\Dif W,\\
\end{split}
\end{equation}
We now wish to let $r\to 0$ in the above relation \eqref{r}. The main difficulty in passing to the limits in the parameter $r$ stems from the nonlinear terms, treatment of others terms is classical. Indeed, we may apply the classical BDG inequality, with the help of a priori estimate and given conditions for noise coefficients, to handle stochastic terms appeared in \eqref{r}. In what follows, we show, with the help of Lemma \eqref{inequality1} and extra regularity of ${\bf U}$, we can pass to the limit in the nonlinear terms. Observe that
\begin{align*} 
&\int_0^t\int_{\mathbb{T}^3}{\bf{u}}_r\cdot \Div({\bf{U}}\otimes{\bf{U}})_r \,\D s \,\D x-\int_0^t\int_{\mathbb{T}^3}{\bf{u}}\cdot \Div({\bf{U}}\otimes{\bf{U}})\,\D s \,\D x\\
&\qquad=\bigg(\int_0^t\int_{\mathbb{T}^3}{\bf{u}}_r\cdot \Div({\bf{U}}\otimes{\bf{U}})_r \,\D s \,\D x-\int_0^t\int_{\mathbb{T}^3}{\bf{u}}_r\cdot \Div({\bf{U}}\otimes{\bf{U}}) \,\D s \,\D x\bigg)\\
&\qquad\qquad+\bigg(\int_0^t\int_{\mathbb{T}^3}{\bf{u}}_r\cdot \Div({\bf{U}}\otimes{\bf{U}})_r \,\D s \,\D x-\int_0^t\int_{\mathbb{T}^3}{\bf{u}}\cdot \Div({\bf{U}}\otimes{\bf{U}}) \,\D s \,\D x\bigg)=:I_1^r+I_2^r,
\end{align*}
For the second nonlinear term,
\begin{align*} 
&\int_0^t\int_{\mathbb{T}^3}{\bf{U}}_r\cdot \Div({\bf{u}}\otimes{\bf{u}})_r \,\D s \,\D x-\int_0^t\int_{\mathbb{T}^3}{\bf{U}}\cdot \Div({\bf{u}}\otimes{\bf{u}}) \,\D s \,\D x\\
&\qquad=\bigg(\int_0^t\int_{\mathbb{T}^3}{\bf{U}}_r\cdot \Div({\bf{u}}\otimes{\bf{u}})_r \,\D s \,\D x-\int_0^t\int_{\mathbb{T}^3}{\bf{U}}_r\cdot \Div({\bf{u}}\otimes{\bf{u}}) \,\D s \,\D x\bigg)\\
&\qquad\qquad+\bigg(\int_0^t\int_{\mathbb{T}^3}{\bf{U}}_r\cdot \Div({\bf{u}}\otimes{\bf{u}})_r \,\D s \,\D x-\int_0^t\int_{\mathbb{T}^3}{\bf{U}}\cdot \Div({\bf{u}}\otimes{\bf{u}}) \,\D s \,\D x\bigg) =:J_1^r+J_2^r,
\end{align*}
First note that
\begin{align*}
|I_1^r|\,&\le\,C\|({\bf{U}}\otimes{\bf U})_r-({\bf U}\otimes{\bf U})\|_{L^1([0,T];L^1(\mathbb{T}^3))},\\
|J_1^r|&\le\,C\,\|({\bf{u}}\otimes{\bf u})_r-({\bf u}\otimes{\bf u})\|_{L^1([0,T];L^1(\mathbb{T}^3))}.
\end{align*}
Moreover, thanks to Lemma~\ref{inequality1}, we conclude
\begin{align*}
|I_2^r|\le\,C\bigg(\int_0^t\|\nabla({\bf{u}}_r-{\bf{u}})\|_{L^2(\mathbb{T}^3)}^2d\tau\bigg)^{1/2}, \quad 
|J_2^r|\le\,C\|{\bf{U}}_r-{\bf{U}}\|_{L^r([0,T];L^s(\mathbb{T}^3))},
\end{align*}
where the constant $C$ depends on $\bf{U}$ and $\bf{u}$ only. Therefore, a simple property of convolution reveals that
\begin{align}
\label{limit}
\lim_{r\to\,0}I_1^r=\lim_{r\to\,0}I_2^r=\lim_{r\to\,0}J_1^r=\lim_{r\to\,0}J_2^r=0.
\end{align}
Finally, letting $r\to 0$ in \eqref{r}, and using \eqref{limit}, we obtain
\begin{equation} 
\label{l1122}
\begin{split}
\Dif \left( \intTor{ \vu   \cdot {\bf U} } \right) &= \intTor{ \left[   -\vu   \cdot \Div({\bf U}\otimes {\bf U})
+  {\vu \otimes \vu }: \nabla_x \vu   \right] }  \,{\rm d}t \\
& \qquad +  \sum_{k\geq1}\intTor{ {\textbf{G}}_k({\bf U}) \cdot {\textbf{G}}_k ({\vu }) }\, {\rm d}t - \int_{\T^3}  \nabla_x {\bf U}: \nabla_x {\bf u} \, \D x \,\D t +  \Dif \mathcal{Q}_1,
\end{split}
\end{equation}
where $\mathcal{Q}_1(t)$ is a square integrable martingale given by
\[
\mathcal{Q}_1(t):=\int_0^t \int_{\mathbb{T}^3}{(\vu   \cdot \tn G(\bf U)+\bm U\cdot\tn{G}(\vu ) } \,\D x\, \Dif W.
\]

\subsection{Proof of Theorem~\ref{Weak-Strong Uniqueness1}}
We closely follow the strategy depicted in Subsection~\ref{sub01}. To proceed, we first introduce a stopping time
$$\kappa_L:=\text{inf}\,\Big\{t\in(0,T)\Big|\,\|{\bf U}(t)\|_{L^s(\mathbb{T}^3)}\,\ge\,L\Big\}$$
Since $\mathbb{E}\Big[\sup_{t\in[0,T]}\|{\bf U}\|_{L^s(\mathbb{T}^N)}\Big]\,\textless\,\infty$ by assumption, we have
$$\mathbb{P}[\kappa_L\,\textless\,T]\,\le\,\mathbb{P}\Big[\sup_{t \in [0,T]}\|{\bf U}\|_{L^s(\mathbb{T}^3)}\,\ge\,L\Big]\,\le\,\frac{1}{L}\mathbb{E}\Big[\sup_{t \in [0,T]}\|{\bf U}\|_{L^s(\mathbb{T}^3)}\Big]\,\to\,0,$$
$$\mathbb{P}\bigg[\lim_{L\to\infty}\kappa_L=T\bigg]=1.$$
Therefore, it is enough to show the result for a fixed $L$.
We now make use of the relative energy \eqref{rell1}, the energy inequality \eqref{eq:apriorivarepsilon1}, and \eqref{l1122}, for both solutions, to conclude that for all $t\in[0,T]$, $\mathbb{P}$-a.s.
\begin{align*}
&\mathrm{F}^{\mathrm{NS}}_{\mathrm{mv}} \left(\vu  \Big|{\bf U}\right)(t \wedge \kappa_L)+\int_0^{t \wedge \kappa_L} \intTor{|\nabla_x{\bf (u-U)}|^2}\\&\qquad =\,\frac{1}{2}\intTor{|{\bf u}(t \wedge \kappa_L)|^2}+\frac{1}{2}\intTor{ |{\bf U}(t \wedge \kappa_L)|^2}+ \int_0^{t \wedge \kappa_L}\intTor{|\nabla_x {\bf u}|^2}dt\\&\qquad\qquad+\int_0^{t \wedge \kappa_L}\intTor{|\nabla_x{\bf U}|^2}dt-\intTor{{\bf u}(t \wedge \kappa_L)\cdot{\bf U}(t \wedge \kappa_L)}-2\int_0^{t \wedge \kappa_L}\intTor{\nabla_x{\bf u}:\nabla_x{\bf U}}dt\\
&\qquad \le \mathrm{F}^{\mathrm{NS}}_{\mathrm{mv}}\left({\bf u}(0)\Big|{\bf U}(0)\right)+\int_0^{t \wedge \kappa_L}\intTor{\nabla_x{\bf u}:({\bf(u-U)}\otimes {\bf U})}dt+\mathcal{Q}_{RE}(t \wedge \kappa_L)\\&\qquad\qquad+\frac{1}{2}\int_0^{t \wedge \kappa_L}\intTor{|\mathcal{P}_H({\tn{G}}({\bf u})-{\tn G}({\bf U}))|^2}dt\\
&\qquad=\,\mathrm{F}^{\mathrm{NS}}_{\mathrm{mv}}\left({\bf u(0)}\Big|{\bf U}(0)\right)+\int_0^{t \wedge \kappa_L}\intTor{\nabla_x ({\bf u-U}):(({\bf u-U})\otimes {\bf U})}+\mathcal{Q}_{RE}(t \wedge \kappa_L)\\&\qquad\qquad+\frac{1}{2}\int_0^{t \wedge \kappa_L}\intTor{|\mathcal{P}_H({\tn{G}}({\bf u})-{\tn G}({\bf U}))|^2}dt\\&\qquad
\le\,\mathrm{F}^{\mathrm{NS}}_{\mathrm{mv}}\left({\bf u}(0)\Big|{\bf U}(0)\right)+C\,\bigg(\int_0^{t \wedge \kappa_L}\int_{\mathbb{T}^N}|\nabla({\bf u-U})|^2\bigg)^{1-1/r}\bigg(\int_0^{t \wedge \kappa_L}\|{\bf U}\|_{L^s}^r\int_{\mathbb{T}^3}|{\bf u-U}|^2\,dxdt\bigg)^{1/r}\\&\qquad\qquad+\mathcal{Q}_{RE}(t \wedge \kappa_L)+\frac{1}{2}\int_0^{t \wedge \kappa_L}\intTor{|{\tn{G}}({\bf u})-{\tn G}({\bf U})|^2}dt\\
&\qquad\le\,\mathrm{F}^{\mathrm{NS}}_{\mathrm{mv}}\left({\bf u}(0)\Big|{\bf U}(0)\right)+\int_0^{t \wedge \kappa_L}\intTor{|\nabla_x{\bf (u-U)}|^2}+C\,\bigg(\int_0^{t \wedge \kappa_L}\|{\bf U}\|_{L^s}^r\int_{\mathbb{T}^3}|{\bf u-U}|^2\,dxdt\bigg)\\&\qquad\qquad+\mathcal{Q}_{RE}(t \wedge \kappa_L)+\frac{D_ 1}{2}\int_0^{t \wedge \kappa_L}\intTor{{\bf |u-U|}^2}dt\\
&\qquad\le\,\mathrm{F}^{\mathrm{NS}}_{\mathrm{mv}}\left ({\bf u}(0)\Big|{\bf U}(0)\right)+\int_0^{t \wedge \kappa_L}\intTor{|\nabla_x({\bf u-U})|^2}+C(L)\bigg(\int_0^{t \wedge \kappa_L}\int_{\mathbb{T}^3}|{\bf u-U}|^2\,dxdt\bigg)\\&\qquad\qquad+\mathcal{Q}_{RE}(t \wedge \kappa_L)+\frac{D_ 1}{2}\int_0^{t \wedge \kappa_L}\intTor{{\bf |u-U|}^2}dt,
\end{align*}
where
\begin{align*}
\mathcal{Q}_{RE}(t)=\mathcal{Q}_1(t)+\int_0^t\int_{\mathbb{T}^3}\Big(\vu \, \mathcal{P}_H \tn G(\vu )+{\bf U}\, \mathcal{P}_H \tn G({\bf U} )\Big)\, \D x\, \D W.
\end{align*}
Note that, in the above calculations, we have used the estimate \eqref{inequality}, and classical Young's inequality
\begin{align*}
x y\,\le\,\frac{\gamma x^a}{a}+\frac{y^b}{b\gamma^{b/a}}
\end{align*}
for any $\gamma\,\textgreater 0,\,\frac{1}{a}+\frac{1}{b}=1$, and $x,y\,\textgreater 0$. Therefore, we obtain for all $t\in[0,T]$, $\mathbb{P}$-a.s.
\begin{equation}
\mathrm{F}^{\mathrm{NS}}_{\mathrm{mv}} \left(\vu  \ \Big|{\bf U} \right)(t \wedge \kappa_L)\le\,\mathrm{F}^{\mathrm{NS}}_{\mathrm{mv}}\left({\bf u}(0)\Big|{\bf U}(0)\right)+C(L)\int_0^{t \wedge \kappa_L}{\mathrm{F}^{\mathrm{NS}}_{\mathrm{mv}} \left(\vu  \ \Big|{\bf U} \right)(s)}ds+\mathcal{Q}_{RE}(t \wedge \kappa_L).\notag
\end{equation}
After taking expectation both side, we have
\begin{equation}\notag
\mathbb{E}\bigg[\mathrm{F}^{\mathrm{NS}}_{\mathrm{mv}} \left(\vu  \ \Big|{\bf U} \right)(t \wedge \kappa_L)\bigg]\le\,\mathbb{E}\bigg[\mathrm{F}^{\mathrm{NS}}_{\mathrm{mv}}\left({\bf u}(0)\Big|{\bf U}(0)\right )+C(L)\int_0^{t \wedge \kappa_L}{\mathrm{F}^{\mathrm{NS}}_{\mathrm{mv}} \left(\vu  \ \Big|{\bf U} \right)(s)}ds\bigg].
\end{equation}
Since 
$\mathbb{E}\bigg[\mathrm{F}^{\mathrm{NS}}_{\mathrm{mv}}\left(\bf u(0)\Big|\bf U(0)\right)\bigg]=0$, 
we may use Gronwall's inequality to conclude that
\begin{align*}\mathbb{E}\bigg[\mathrm{F}^{\mathrm{NS}}_{\mathrm{mv}} \left(\vu  \ \Big|{\bf U} \right)(t \wedge \kappa_L)\bigg]=0.
\end{align*}
We can now pass to the limit as $L\to\infty$ to conclude that 
\begin{align*}\mathbb{E}\bigg[\mathrm{F}^{\mathrm{NS}}_{\mathrm{mv}} \left(\vu  \ \Big|{\bf U} \right)(t)\bigg]=0, \,\,\text{for all}\,\,t\in[0,T].
\end{align*}
This implies that $\mathbb{P}$-almost surely, for all $t\in[0,T]$, 
$$
\vu (x,t)={\bf U}(x,t),\,\,\,\text{for a.e.}\,\,x\in\mathbb{T}^3.
$$
This finishes the proof of the theorem.

\section*{Acknowledgements}
U.K. acknowledges the support of the Department of Atomic Energy,  Government of India, under project no.$12$-R$\&$D-TFR-$5.01$-$0520$, and India SERB Matrics grant MTR/$2017/000002$.

\end{document}